\title{Group actions, and D equivalences of categories of coherent sheaves of symplectic resolutions}
\author{D. Boger}
\date{}
	\newcommand{\bs}{\bigskip}
	\definecolor{red}{rgb}{1,0,0}
		\newcommand{\D}{\mathcal{D}}
	\newcommand{\subsubsubsection}[1]{\vspace{0.5cm}
 \textit{\textbf{#1}} \vspace{0.2cm}}
	\newcommand{\g}{\mathfrak{g}}
	\newcommand{\R}{\mathbb{R}}
	\newcommand{\CC}{\mathbb{C}}
	\newcommand{\C}{\mathcal{C}}
	\newcommand{\G}{\mathcal{G}}
	\newcommand{\A}{\mathcal{A}}
\begin{document}
\onehalfspacing
\maketitle
			
\newcommand{\Dl}{\mathcal{D_{\lambda}}}
\newcommand{\Aop}{\A_F^-}
\newcommand{\Dle}{\mathcal{D}_{\lambda,e}}
\newcommand{\ul}{u(\g)_{\lambda}}
\newcommand{\Al}{\Gamma(\Dl(G/P))}

\newtheorem{thm}{Theorem}
\newtheorem{lemma}{Lemma}
\newtheorem{claim}{Claim}
\newtheorem{defi}{Definition}
\newtheorem{cor}{Corollary}
\newtheorem*{example}{Example}
\newtheorem{remark}{Remark}
\newtheorem{observation}{Observation}

\begin{abstract}

Let G be a reductive group over an algebraically closed field k of characteristic $p>>0$. Let $\g$ be its Lie algebra. Let $\mathcal{B}$ be the variety of Borel subalgebras in $\g$.  
There are two questions that motivates this work. One is the question of constructing an action of a group on the category $\C:=D^b(Coh(X))$ - the derived category of coherent sheaves of symplectic resolution X. The second question is understanding equivalence functors between these categories for different symplectic resolution of the same symplectic singularity. In this paper, we answer the first question by constructing such an action for the case $X=T^*G/P$, P parabolic subgroup. This extends the affine braid group action on category of coherent sheaves, in the case of the well known springer resolution $T^*\mathcal{B}\rightarrow \mathcal{N}$. We construct the group action by constructing a local system of categories on a topological space called $V^0_{\CC}$, with value - the category $\C$ hence obtaining an action of $\pi_1 V^0_{\CC}$. We hope to prove a generalization of this construction is well defined for arbitrary symplectic resolution. In \cite{B1} we further explain how a refinement of this local system construction, gives an answer to the second question, showing that these equivalence functors, are parametrized by homotopy classes of maps between certain points in the base space $V^0_{\CC}$. We also lift the result to the case that the field k is of characteristic zero.

\end{abstract}

\tableofcontents
\bs

I'm very thankful to Prof. Bezrukavnikov, for his great help and support. 

\section{Notations and Basic Facts}

The symbols below have the following meaning in this paper: 

Let G be a reductive group over an algebraically closed field k, of characteristic $p>>0$. 

Fix a maximal torus T. Let $\Lambda:=Hom(T,\mathbb{G}_m)$ be the integral weight lattice. Assume all parabolic subgroups that we will mention, contain T. Let $L\supset T$ be a fixed Levi subgroup. Let  $\Lambda_L:=Hom(L,\mathbb{G}_m)=Hom(L/[L,L],\mathbb{G}_m)\subset \Lambda$. (The canonical embedding is given by restriction from L to T). Explicitly $\Lambda_L=\lambda\in \Lambda| <\lambda,\alpha^{\vee}>=0$ for all roots $\alpha$ with weight space $\g_{\alpha}\subset Lie(L)$ 
\bs

\begin{remark}
We summarize here the notations we will be using in the text, however, a more detailed explanation of them appears in the background section.
\end{remark}

\begin{defi}
		Let W denote the Weyl group. $\lambda\in \Lambda$ is called \emph{p-regular} (We will also call it regular, when there is no risk of confusion between char 0 and p), if the stabilizer in W, of $\lambda+p\Lambda\in \Lambda/p\Lambda$is trivial, (Under the action $w\bullet \lambda:=pw((\lambda+\rho)/\rho)-\rho$). Equivalently, $<\lambda+\rho,\alpha^{\vee}>\neq 0$ mod p for all coroots $\alpha$. 
\end{defi}
\bs

Most of of the weights in $(\Lambda_L\otimes \R)$ are p-regular as elements of $\Lambda$.
\bs

Let $P\subset G$ be a parabolic subgroup, with Levi L. 
\begin{defi}
Let $\lambda\in \Lambda_L$. Let $\mathcal{D}_{\lambda}(G/P)$ be the sheaf of $\lambda$ twisted crystalline differential operators on G/P.  (We will denote it as $\mathcal{D}_{\lambda}$ when there is no ambiguity). Let $D^b(\mathcal{D}_{\lambda}-mod)$ be the twisted D modules.
\end{defi}
\begin{defi}
Let $A_{\lambda}:=\Gamma(\mathcal{D}_{\lambda}(G/P))$ be the global sections of the sheaf $\mathcal{D}_{\lambda}(G/P)$. Let $A_{\lambda}-mod$ be the category of finitely generated $A_{\lambda}$ modules. 
\end{defi}

\begin{observation}
By derived localization theorem in characteristic p, if $\lambda$ is p regular, there is an equivalence of categories $\Gamma:D^b(\mathcal{D}_{\lambda}-mod)\rightarrow D^b(A_{\lambda}-mod)$. \cite{BMR} (\cite{Be1} for the classical BB localization, which this theorem is a derived analog of)
\end{observation}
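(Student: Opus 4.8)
\medskip
\noindent\emph{Proof sketch.} The plan is to run the Bezrukavnikov--Mirkovi\'c--Rumynin argument for the full flag variety $G/B$ and to reduce the partial-flag case to it. First I would set up the adjoint pair: since $G/P$ is smooth projective of dimension $d$, the functor $\Gamma$ has cohomological dimension at most $d$, so $R\Gamma \colon D^b(\mathcal{D}_\lambda\text{-mod}) \to D^b(A_\lambda\text{-mod})$ is defined, with left adjoint $\mathrm{Loc} := \mathcal{D}_\lambda \otimes^{\mathbf{L}}_{A_\lambda}(-)$; the latter preserves bounded complexes once one knows $A_\lambda$ has finite global dimension, which will follow from the Azumaya picture below. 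It then suffices to show that the unit $\mathrm{id} \to R\Gamma\circ\mathrm{Loc}$ and the counit $\mathrm{Loc}\circ R\Gamma \to \mathrm{id}$ are isomorphisms, and for this it is enough to prove the counit is an isomorphism and that $R\Gamma$ is essentially surjective.

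The key input is a cohomology-vanishing statement in characteristic $p$, which I would deduce from the full-flag case via the smooth proper fibration $\pi\colon G/B \to G/P$ whose fibre is the flag variety $P/B$ of the Levi $L$. Since $\lambda \in \Lambda_L$ is trivial on $[L,L]$, the twist is trivial along the fibres, the relative differential operators contribute only $\mathcal{O}_{P/B}$ (and $H^{>0}(P/B,\mathcal{O})=0$ in characteristic $p$, so $R\pi_*\mathcal{O}_{G/B}=\mathcal{O}_{G/P}$), and a projection-formula and spectral-sequence argument identifies $R\Gamma$ on $G/P$ of $\mathcal{D}_\lambda$-modules with $R\Gamma$ on $G/B$ of the corresponding $\mathcal{D}_\lambda(G/B)$-modules. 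For $G/B$ and $p$-regular $\lambda$ one then invokes \cite{BMR}: $\mathcal{D}_\lambda$ is an Azumaya algebra over the Frobenius twist of a twisted cotangent bundle, that bundle maps properly to its affine cone, and $p$-regularity of $\lambda$ forces the relevant higher direct images, hence $H^{>0}(G/B,\mathcal{D}_\lambda\otimes_{\mathcal{O}}\mathcal{L})$, to vanish for $\mathcal{L}$ in a generating family of line bundles.

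To finish I would argue by d\'evissage. The $\mathcal{D}_\lambda$-modules $\mathcal{D}_\lambda\otimes_{\mathcal{O}_{G/P}}\mathcal{O}(\mu)$ with $\mu\in\Lambda_L$, i.e. those induced from the line bundles generating $D^b(\mathrm{Coh}(G/P))$, generate $D^b(\mathcal{D}_\lambda\text{-mod})$, because every coherent $\mathcal{D}_\lambda$-module is built up from its underlying $\mathcal{O}$-module through the order filtration. On each such generator the vanishing of the previous step shows $R\Gamma$ is concentrated in degree zero and that the natural map $\mathrm{Loc}\,\Gamma(\mathcal{D}_\lambda\otimes\mathcal{O}(\mu)) \to \mathcal{D}_\lambda\otimes\mathcal{O}(\mu)$ is an isomorphism; since both $\mathrm{Loc}\circ R\Gamma$ and $\mathrm{id}$ are exact, the counit is then an isomorphism on all of $D^b(\mathcal{D}_\lambda\text{-mod})$, so $R\Gamma$ is fully faithful. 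For essential surjectivity, $A_\lambda = R\Gamma(\mathcal{D}_\lambda)$ lies in the image, and having finite global dimension $A_\lambda$ generates $D^b(A_\lambda\text{-mod})$; a fully faithful exact functor whose essential image contains a generator and is closed under cones and shifts is essentially surjective.

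The main obstacle is the vanishing step: $H^{>0}(G/B,\mathcal{D}_\lambda\otimes\mathcal{L})=0$ for $p$-regular $\lambda$ is exactly the technical heart of \cite{BMR}, and it genuinely requires the Azumaya geometry over the (twisted) cotangent bundle rather than a naive dominance estimate, since in characteristic $p$ the abelian $\mathcal{D}_\lambda$-affineness statement can fail even for regular $\lambda$ --- which is precisely why the result must be phrased on the derived level. Relative to \cite{BMR}, the only genuinely new bookkeeping is checking that $\pi\colon G/B\to G/P$ does reduce the $G/P$ statement to the $G/B$ one, the crucial point being the triviality of the relative twist for $\lambda\in\Lambda_L$.
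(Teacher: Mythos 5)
The paper does not prove this Observation; it is stated as a cited fact, with \cite{BMR} supplying the $G/B$ case and \cite{BMR2} (cited a few paragraphs later in the Background section) supplying the parabolic $G/P$ case. So there is no in-paper argument to compare against, and you should view your sketch as an attempt to reconstruct the content of those references.

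On its own merits your reduction strategy is plausible but diverges from what \cite{BMR2} actually does, and it has a gap worth naming. The reference handles the parabolic case by \emph{singular localization}: a weight $\lambda\in\Lambda_L$ is on the root hyperplanes for $L$, so it defines a singular Harish-Chandra character, and \cite{BMR2} proves $\Gamma:D^b(\mathcal{D}_\lambda(G/P)\text{-mod})\to D^b(A_\lambda\text{-mod})$ is an equivalence directly, rather than by fibering $G/B\to G/P$. Your route via $\pi:G/B\to G/P$ is internally consistent on the line-bundle side (the twist by $\lambda\in\Lambda_L$ is indeed trivial on the fibre $P/B\simeq L/B_L$, and $R\pi_*\mathcal{O}_{G/B}=\mathcal{O}_{G/P}$), but it is not obvious that the resulting comparison of $R\Gamma$'s alone reduces the equivalence statement to the $G/B$ one, since $D^b(\mathcal{D}_\lambda(G/B)\text{-mod})$ is strictly larger and the pullback $\pi^*$ does not identify the two module categories. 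The genuine gap, though, is in the d\'evissage: you assert $H^{>0}(G/B,\mathcal{D}_\lambda\otimes\mathcal{O}(\mu))=0$ for a family of $\mu$ generating $D^b(\mathrm{Coh})$, then use this to prove the counit is an isomorphism on generators. If such vanishing held for a full generating family, $\mathcal{D}_\lambda$-affineness would hold at the abelian level, which is precisely what fails in characteristic $p$ and is the reason the theorem is a derived statement. The vanishing available from \cite{BMR} is only for a restricted (dominant/ample) cone of twists, which does not by itself yield a generating family with the required vanishing; the actual argument in \cite{BMR} instead exploits the Azumaya structure of $\mathcal{D}_\lambda$ over $\tilde{\mathfrak g}^{(1)}$, the smallness/properness of the Grothendieck--Springer map, and a Calabi--Yau/Serre-duality trick to upgrade full faithfulness, rather than a naive line-bundle d\'evissage. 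You acknowledge this at the end, but the proof sketch as written presents the d\'evissage as if it closes the argument, and it does not.
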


\begin{defi}
Working over k, we can think of $\mathcal{D}(G/P)_{\lambda}$ modules as coherent sheaves on $T^*G/P$ with extra data. Let F be the Forgetful functor $F:\mathcal{D}_{\lambda}-mod\rightarrow Coh(T^*G/P)$.  
Let $Coh(T^*G/P)_0$ be the category of coherent sheaves, supported on the formal neighborhood of the zero section of $T^*G/P$. $G/P\subset T^*G/P$

Let $\mathcal{D}_{\lambda}-mod_0$ be $\lambda$ twisted D-modules $\mathcal{F}$, s.t the support of $F(\mathcal{F})$ is on the formal neighborhood of the zero section of $T^*G/P$. 

Let $A_{\lambda}^{\hat{0}}-mod$ be the category of finitely generated $A_{\lambda}$ modules, with generalized p center, zero.  

(Another notation that we will use for this category is $A_{\lambda}-mod_0$)
\end{defi}

\begin{observation}
The derived localization theorem induces an equivalence on the full subcategories $D^b(\mathcal{D}_{\lambda}-mod_0)\simeq D^b(A_{\lambda}^{\hat{0}}-mod)$. (\cite{BMR2}). 
\end{observation}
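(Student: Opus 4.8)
The plan is to deduce the statement from the derived localization equivalence $\Gamma\colon D^b(\mathcal{D}_\lambda-mod)\xrightarrow{\sim} D^b(A_\lambda-mod)$ of the previous observation, by checking that it is linear over the ``$p$-center'' and then carving out both subcategories by the same support condition on that center. Recall from \cite{BMR} that $\mathcal{D}_\lambda$ is an Azumaya algebra over the Frobenius twist of the ($\lambda$-twisted) cotangent bundle $T^*(G/P)$; let $\mathcal{Z}$ be its center sheaf and put $Z_p:=\Gamma(G/P,\mathcal{Z})$, the algebra of global functions on that Frobenius-twisted cotangent bundle. Via $\mathcal{Z}\hookrightarrow\mathcal{D}_\lambda$ this is a central subalgebra of $A_\lambda=\Gamma(G/P,\mathcal{D}_\lambda)$ — precisely the $p$-center appearing in the definition of $A_\lambda^{\hat{0}}-mod$. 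Both $\mathcal{D}_\lambda-mod$ and $A_\lambda-mod$ are acted on by $Z_p$ through endomorphisms of the identity functor (multiplication by central elements), and since $R\Gamma(G/P,-)$ carries the action of $z\in\mathcal{Z}$ on $\mathcal{M}$ to the action of its image $z\in Z_p\subseteq A_\lambda$ on $\Gamma(\mathcal{M})$, the equivalence $\Gamma$ and its inverse (derived localization) are $Z_p$-linear.

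The next step is the elementary observation that an exact, $Z_p$-linear equivalence of triangulated categories restricts to an equivalence between the full subcategories of objects $X$ for which some power of a fixed ideal $I\subseteq Z_p$ annihilates $\mathrm{id}_X$: by full faithfulness, $I^n\cdot\mathrm{id}_X=0$ if and only if $I^n\cdot\mathrm{id}_{\Gamma(X)}=0$. I would apply this with $I=\mathfrak{m}_0$, the maximal ideal of the distinguished point $0\in\operatorname{Spec}(Z_p)$, namely the image of the zero section under the Springer-type map $T^*(G/P)\to\mathfrak{g}^*$. This produces an equivalence between the objects of $D^b(\mathcal{D}_\lambda-mod)$ and of $D^b(A_\lambda-mod)$ all of whose cohomology sheaves are killed by a power of $\mathfrak{m}_0$.

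It remains to identify these support-conditioned subcategories with the two subcategories in the statement. On the $A_\lambda$-side this is the definition: ``generalized $p$-center zero'' means $Z_p$ acts through a nilpotent thickening of $0$, i.e. $\mathfrak{m}_0^n$ kills the module. On the $\mathcal{D}_\lambda$-side one uses the moment-map computation: $T^*(G/P)=G\times_P\mathfrak{n}_{\mathfrak{p}}$ with moment map $(g,x)\mapsto\operatorname{Ad}(g)x$, whose fibre over $0$ is exactly the zero section $G/P$ (from $x\in\mathfrak{n}_{\mathfrak{p}}$ and $\operatorname{Ad}(g)x=0$ one gets $x=0$). Since Frobenius is a homeomorphism, $\mathfrak{m}_0\cdot\mathcal{O}$ has radical the ideal of the zero section, so for a coherent $\mathcal{D}_\lambda$-module $\mathcal{M}$ the conditions ``$F(\mathcal{M})$ supported on the formal neighbourhood of the zero section'' and ``$\mathfrak{m}_0^n\mathcal{M}=0$ for some $n$'' are both equivalent to set-theoretic support of $F(\mathcal{M})$ in the zero section, hence to each other. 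Thus $\mathcal{D}_\lambda-mod_0$ and $A_\lambda^{\hat{0}}-mod$ are exactly the abelian subcategories cut out by this condition on either side. Finally, for $\mathcal{D}_\lambda-mod$ and for $A_\lambda-mod$ the full subcategory of objects of $D^b$ with all cohomologies in such a Serre subcategory is canonically the bounded derived category of that Serre subcategory; combining this on both sides with the restricted equivalence of the previous step gives $D^b(\mathcal{D}_\lambda-mod_0)\simeq D^b(A_\lambda^{\hat{0}}-mod)$, as in \cite{BMR2}.

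Assuming the Azumaya and $p$-center description of $\mathcal{D}_\lambda$ from \cite{BMR} (which is the substantive input), the remaining delicate point is the last identification $D^b(\mathcal{A}')\simeq D^b_{\mathcal{A}'}(\mathcal{A})$ for the relevant pairs of abelian categories: it is standard but uses Noetherianness and enough injectives (or the corresponding resolution statement for the complexes computing $R\Gamma$), and one should check that ``supported on the formal neighbourhood of the zero section'' and ``generalized $p$-center zero'' genuinely define Serre subcategories, which follows from coherence and finite generation. Everything else is formal.
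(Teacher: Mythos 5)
The paper states this observation without proof, deferring entirely to \cite{BMR2}, so there is no in-text argument to compare against. Your proposed proof is the standard reduction that underlies the cited result and is essentially sound: $Z_p$-linearity of the derived localization equivalence, plus a Postnikov devissage on a bounded complex, carves out matching torsion subcategories, and the moment-map computation $\mu^{-1}(0)=G/P$ identifies ``supported on the formal neighbourhood of the zero section'' with ``set-theoretically supported over $0\in\mathrm{Spec}(Z_p)$''. Two points are worth tightening. First, $Z_p:=\Gamma(G/P,\mathcal{Z})$ is not literally the $p$-center appearing in the definition of $A_\lambda^{\hat{0}}$-mod; that $p$-center is the image of the Frobenius center $\xi_{Fr}\subset U(\g)$ under the surjection $U(\g)\twoheadrightarrow A_\lambda$. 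For non-Borel $P$ the parabolic moment map $T^*G/P\to\overline{\mathcal{N}_L}$ is only generically finite, so $\Gamma(\mathcal{Z})$ is a priori a proper finite extension of the image of $\xi_{Fr}$; this does not break the argument because the two ideals over the origin have the same radical, but it should be stated rather than asserted as an identification. Second, the step you flag yourself, $D^b(\mathcal{A}')\simeq D^b_{\mathcal{A}'}(\mathcal{A})$, is indeed the only substantive input beyond $Z_p$-linearity: it holds here by the usual criterion because the $\mathfrak{m}_0$-power-torsion submodule of a finitely generated module over a Noetherian ring (respectively, the subsheaf of sections set-theoretically supported on the zero section of a coherent sheaf) is itself finitely generated, coherent, and functorial. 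With these two points spelled out your argument is a complete proof of the observation and is consistent with what \cite{BMR2} establishes.
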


\begin{example}
Let $U(\g)$ be the enveloping algebra.
Let P be a borel B subgroup. The category $A^{\hat{0}}_{\lambda}-mod$ is $U(\g)$ modules, whose Harich Chandra center is $\lambda$ and with whose generalized p-center is zero.
\end{example}

\begin{observation}
There is a surjective morphism $U(\g)_{\lambda}\rightarrow A_{\lambda}$, and $U(\g)^{\hat{0}}_{\lambda}\rightarrow A^{\hat{0}}_{\lambda}$. 
\end{observation}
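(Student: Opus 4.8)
\emph{Proof strategy.} Since $\lambda\in\Lambda_L$ extends to a character of $P$, it gives a $G$-equivariant line bundle $\mathcal{O}_\lambda=G\times^P k_\lambda$ on $G/P$, and $\mathcal{D}_\lambda(G/P)$ is the sheaf of crystalline differential operators acting on $\mathcal{O}_\lambda$. The infinitesimal $G$-action on the pair $(G/P,\mathcal{O}_\lambda)$ is a Lie algebra homomorphism $\g\to\Gamma(G/P,\mathcal{D}_\lambda)=A_{\lambda}$ (the Lie derivative along $G$-vector fields, acting on sections of $\mathcal{O}_\lambda$), which by the universal property of the enveloping algebra extends to an algebra map $U(\g)\to A_{\lambda}$. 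The first thing I would check is that the image of the Harish--Chandra center $Z_{HC}\subset U(\g)$ consists of scalars, and equals $\chi_\lambda(Z_{HC})$ for the central character $\chi_\lambda$ attached to $\lambda$: the first point is forced by the associated--graded computation below (invariant symbols must vanish), and the second is a direct computation, e.g. in a formal neighbourhood of a point of $G/P$, or on the $\mathcal{D}_\lambda$-module $\mathcal{O}_\lambda$. Hence the map descends to $U(\g)_{\lambda}\to A_{\lambda}$.

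For surjectivity I would filter $\mathcal{D}_\lambda$ by order of differential operator; since these are crystalline operators the associated graded sheaf is $\mathrm{Sym}\,\mathcal{T}_{G/P}=\pi_*\mathcal{O}_{T^*(G/P)}$, where $\pi\colon T^*(G/P)\to G/P$. The crucial input is that $\Gamma$ commutes with this $\mathrm{gr}$, i.e. $H^{>0}(G/P,\mathrm{Sym}^n\mathcal{T}_{G/P})=0$ for all $n$, equivalently $H^{>0}(T^*(G/P),\mathcal{O})=0$; this holds in characteristic $0$ and therefore for $p\gg 0$ by semicontinuity. Granting it, $\mathrm{gr}$ of the map $U(\g)\to A_{\lambda}$ is the comoment map $\mathrm{Sym}(\g)=k[\g^*]\xrightarrow{\,\mu^*\,}\Gamma(T^*(G/P),\mathcal{O})$, with $\mu$ the moment map, whose image is the coordinate ring of the closure of the Richardson nilpotent orbit of $P$. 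Once $\mu^*$ is known to be onto $\Gamma(T^*(G/P),\mathcal{O})$, the standard fact that a filtered map with surjective associated graded is itself surjective yields surjectivity of $U(\g)_{\lambda}\to A_{\lambda}$; it also shows $\ker\chi_\lambda$ lies in the kernel, since $k[\g^*]^G_+$ maps to $0$ (the moment image is nilpotent).

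The $\hat 0$ statement is then formal. The homomorphism $U(\g)_{\lambda}\to A_{\lambda}$ is a map of algebras over the $p$-center $Z_{Fr}=\mathcal{O}(\g^{*(1)})$ (the classes $x^{[p]}-x^p$, $x\in\g$, are central and functorial), and $U(\g)^{\hat 0}_{\lambda}$ and $A_{\lambda}^{\hat 0}$ are the base changes $(-)\otimes_{Z_{Fr}}\widehat{\mathcal{O}}_{\g^{*(1)},0}$, i.e. completions along the $p$-center at the point $0$ over which the zero section sits. Completion along a module-finite central subalgebra is exact, so surjectivity survives: $U(\g)^{\hat 0}_{\lambda}\twoheadrightarrow A_{\lambda}^{\hat 0}$.

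The main obstacle is the geometry used above: the vanishing $H^{>0}(T^*(G/P),\mathcal{O})=0$ in characteristic $p$, and the surjectivity of the comoment map $\mu^*$ on global sections. For $P=B$ the latter is the identity $\Gamma(T^*(G/B),\mathcal{O})=k[\Ns]$, i.e. normality of the nilpotent cone together with the Grothendieck--Springer resolution; for general $P$ it amounts to identifying $\Gamma(T^*(G/P),\mathcal{O})$ with the coordinate ring of (the normalization of) the Richardson orbit closure and checking it is generated by the linear functions $\mu^*(\g)$. Rather than reproving these, an alternative route I would pursue is to deduce the whole statement from characteristic $0$: $U(\g)$, $A_{\lambda}$ and the map are defined over a localization of $\mathbb Z$, surjectivity over $\mathbb Q$ is the classical Beilinson--Bernstein/Borho--Brylinski theorem for $G/P$, and the cokernel is then a finitely generated module killed by $\otimes\,\mathbb Q$, hence vanishing after $\otimes\,\overline{\mathbb F}_p$ for $p\gg 0$ --- using once more the $H^1$-vanishing that makes $\Gamma(\mathcal{D}_\lambda)$ itself commute with reduction mod $p$.
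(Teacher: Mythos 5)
The paper offers no proof of this statement; it appears as a background observation, and where the surjectivity is later invoked (in the proof of the Key Lemma) it is attributed to \cite{BMR} (really \cite{BMR2} for $G/P$). Your proposal tries to give an honest argument, but the primary route — through the associated graded — has a genuine gap. You reduce to surjectivity of the comoment map $\mu^*\colon \mathrm{Sym}(\g)\to\Gamma(T^*(G/P),\mathcal O)$ and assert that the target is ``the coordinate ring of (the normalization of) the Richardson orbit closure.'' That identification fails exactly when the parabolic moment map $T^*(G/P)\to\overline{O_R}$ is generically finite of degree greater than $1$, which the paper explicitly allows (``not quite a symplectic resolution but rather generically a finite cover''). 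In that case Stein factorization makes $\Gamma(T^*(G/P),\mathcal O)$ a finite extension of $k[\overline{O_R}]$ strictly larger than its normalization, and since $\mu^*$ factors through restriction to the closed subvariety $\overline{O_R}\subset\g^*$, it cannot be surjective. This does not contradict the observation: it just means the order filtration on $A_\lambda$ inherited from $\mathcal D_\lambda$ is strictly finer than the image of the PBW filtration under the surjection, so the implication ``$\mathrm{gr}$ surjective $\Rightarrow$ map surjective'' is simply not applicable, and the graded argument collapses.

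Your fallback route is the one that actually works and is closest to what the paper's citation is appealing to: surjectivity of $U(\g)_\lambda\to\Gamma(\mathcal D_\lambda(G/P))$ in characteristic $0$ is Borho--Brylinski, $H^{>0}(G/P,\mathcal D_\lambda)=0$ guarantees that $A_\lambda$ commutes with reduction modulo $p$ for $p\gg 0$, and then a finitely generated cokernel that dies over $\mathbb Q$ dies over $\overline{\mathbb F}_p$ for $p\gg 0$. The passage to the $\hat 0$ version via exactness of completion along the Frobenius center is fine. You should reorganize so that the reduction-from-characteristic-$0$ argument is the proof and the associated-graded discussion is, at most, a heuristic valid when $\overline{O_R}$ is normal and the cover has degree one (e.g.\ type $A$).
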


Let $\mathcal{N}\subset \g$ be the nilpotent cone. Let $\pi:T^*G/P\rightarrow \mathcal{N}$ be the moment map. (Also called parabolic springer map). Let $\mathcal{N}_L$ be its image. 
\bs

Remark: In the above notations and in the rest of the paper - we made a choice to work with coherent sheaves and D modules, supported on a formal neighborhood of $\pi^{-1}(0)$,  the zero section of $T^*G/P$. This can be generalized to support on formal neighborhoods of other parabolic springer fibers as well.  

\bs

Notations: In this paper, we use the letters Q,P to denote associate Parabolic subgroups with the same Levi subgroup L.

\section{Motivation and statement}

Two questions form the motivation for this work: For G/k , $k=\bar{k}, char(k)=p>>0$ as above,  $\g:=Lie(G)$, B a Borel subgroup, $\mathcal{N}$-the nilpotent cone in $\g$. The springer resolution $\pi:T^*G/B\rightarrow \mathcal{N}$ is a symplectic resolution. Its derived category $D^b(Coh_0(T^*{G/B}))$ (0 stands for the support condition) carries a known action of the affine braid group $Br_{aff}$ (\cite{Ric}\cite{BR}\cite{BM}\cite{BMR}\cite{Be1}. See also \cite{AB} for different realization of the action). One would like to find analogous actions for other symplectic resolutions. In this work we build a construction for the case $X=T^*G/P$ (a symplectic resolution of a finite cover of $\mathcal{N}_L$). It's a construction of a local system of categories on a simple topological space $V^0_{\CC}$, with value $\C:=D^b(Coh_0(T^*G/P))$. This gives rise to an action of $\pi_1(V^0)_{\CC}$ on the category $\C$. In the case of P=B, this recovers the action of $Br_{aff}$ on the category. More precisely, $\pi_1(V^0_{\CC})=Br_{aff,pure}$ - the pure affine braid group. However in this case there is additional symmetry in the construction, that allows the action to extend to action of $Br_{aff}$. In \cite{B1}, we also explain how to lift this action to case of G over a char 0 field and generalize to the category of coherent sheaves without the support condition.

A central role in the construction is played by the topic of quantizations in characteristic p. The base of the construction is subset $V^0_{\CC}$ of the universal parameter space of quantizations of symplectic resolutions. The quantizations of a symplectic resolution X, which are parametrized by points in $V^0_{\CC}$, give rise to various t structures on the category $D^b(Coh(X))$. The local system that we construct enables a description of the variation of the t structures along the parameter space $V^0_{\CC}$. (\cite{B2})

\bs

A second question that motivated the work is Kawamata's 'K equivalence implies D equivalence conjecture': two smooth projective varieties X,X' over algebraically closed field, which are birationally equivalent, are called K equivalent, if there is a birational correspondence $X\leftarrow Z \rightarrow X'$, for Z smooth projective variety, such that the pullbacks to Z of the canonical divisors are linear equivalent. The conjecture is that K equivalence implies the equivalence of the bounded derived categories of coherent sheaves.  One caveat of this conjecture, is that one do not expect to get a canonical equivalence. Indeed, a special case where K equivalence is satisfied - is two symplectic resolutions X,X' of the same symplectic singularity Y. In this case, the D equivalence was proved by Kaledin. The key to his construction of equivalence functor , is a choice of a tilting generator of $D^bCoh(X)$, which is a noncanonicaly defined object. Hence, the second question is understanding the family of equivalences between the derived categories. In \cite{B1}, we explain that a refinement of the above local system, gives a parametrization for natural equivalence functors between $D^b(Coh(X))\rightarrow D^b(Coh(X'))$ by homotopy classes of maps between certain points in the base space $V^0_{\CC}$.

\section{Background}

			\subsection*{D modules and Coherent sheaves in characteristic p}
			
					Let k be a perfect field of characteristic $p>>0$. Let X be a smooth variety over k. 
					
					Let $\mathcal{D}_X$ be the sheaf of crystalline differential operators on X. We recall the relations of $\mathcal{D}_X$ with the structure sheaf of the cotangent space $O_{T^*X}$ and recall the relations between $\mathcal{D}_X$ modules and coherent sheaves on $T^*X$. Two main ideas are 1. $\mathcal{D}_X$ is a quantization of the sheaf $O_{T^*X}$. 2. $\mathcal{D}_X$ is a quasi coherent sheaf of algebras on $T^*X^{(1)}$ because the center of $\mathcal{D}_X$ is canonically isomorphic to the sheaf $O_{T^*X^{(1)}}$. The superscript (1) stands for Frobenius twist. For a variety over a perfect field k of char p, which is defined over $F_p$, the variety and its Frobenius twist are isomorphic as k schemes, hence we usually omit the superscript from the notation and identify $T^*X^{(1)}\simeq T^*X$.
					\bs
					
					In this subsection we focus on observations and examples. For more see \cite{BM}\cite{BMR}.

					\begin{defi}
					Let $\mathcal{D}_X$ be the sheaf of crystalline differential operators on X. It assigns for an affine open subset $U\subset X$, an algebra $D(U)$.
					
					$D(U)$ is defined to be generated by $O(U)$ and $Vect(U)$ (Vector fields). The relations are: 
					
					Let $\xi_1,\xi_2\in Vect(U)$, $\xi_1\xi_2-\xi_2\xi_1=[\xi_1,\xi_2]\in Vect(U)$. 
					
					Let $\xi_1\in Vect(U), f\in O(U)$. $\xi f-f\xi=\xi(f)$. 
					
					$O(U)\subset D(U)$ is a subalgebra.
					
					\end{defi}
					
					Remark: if k was an algebraically closed field of characterstic zero, this is the definition by generators and relations of the ordinary sheaf of differential operators in characteristic zero. 
					\bs
					
					Twisted differential operators. Let $\mathcal{L}\in Pic(X)$ be a line bundle. Then the sheaf of $\mathcal{L}$ twisted differential operators  $\mathcal{D}_X^{\mathcal{L}}$ is well defined. 
					\bs
					
					Notation: Let $X:=G/B$ let $\mathcal{D}_{\lambda}:=\mathcal{D}^{\mathcal{L}}$, where $\mathcal{L}$ corresponds to $\lambda$ under the equivalence $Pic(G/B)\simeq \Lambda$. Similarly for $X:=G/P$ where $Pic(G/P)\simeq \Lambda_L$
				\bs
				
				Notation: Let $\mathcal{D}_X^{\mathcal{L}}-mod$ be the category of $\mathcal{L}$ twisted D modules on X. (For X:=G/P we also denote it $\mathcal{D}_{X,\lambda}-mod$ or $\mathcal{D}_{\lambda}-mod$ when there is no ambiguity)
					\begin{claim}
					It is still true, as in characteristic 0, that $\mathcal{D}_X$ is a quantization of $O_{T^*X}$. That is, there is a natural filtration on this sheaf and $gr(\mathcal{D}_X)\simeq O_{T^*X}$. 
					\end{claim}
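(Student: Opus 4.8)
The plan is to prove this exactly as one proves the Poincaré--Birkhoff--Witt theorem: the statement is local on $X$ and is the PBW theorem for the Lie--Rinehart pair $(\mathcal{O}_X,\mathcal{T}_X)$, whose proof goes through verbatim in characteristic $p$. First I would equip $\mathcal{D}_X$ with the \emph{order filtration}: for an affine open $U\subset X$, let $F_nD(U)$ be the $\mathcal{O}(U)$-submodule generated by all products $\xi_1\cdots\xi_m$ of vector fields with $m\le n$, with $F_{-1}=0$ and $F_0=\mathcal{O}(U)$. These glue to an exhaustive filtration $F_\bullet\mathcal{D}_X$ by sheaves of $\mathcal{O}_X$-modules (exhaustive since $\mathcal{O}(U)$ and $Vect(U)$ generate $D(U)$), and it is multiplicative, $F_i\cdot F_j\subseteq F_{i+j}$, directly from the presentation by generators and relations.

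The next step is the commutator estimate $[F_i,F_j]\subseteq F_{i+j-1}$, which follows by induction from the two defining relations $[\xi,f]=\xi(f)\in\mathcal{O}_X=F_0$ and $[\xi_1,\xi_2]\in Vect(U)\subseteq F_1$, together with the Leibniz identity $[a,bc]=[a,b]c+b[a,c]$. Consequently $\mathrm{gr}\,\mathcal{D}_X$ is a sheaf of \emph{commutative} graded $\mathcal{O}_X$-algebras, generated over $\mathrm{gr}_0=\mathcal{O}_X$ by $\mathrm{gr}_1$, and the inclusion $Vect\hookrightarrow F_1$ induces an $\mathcal{O}_X$-linear map $\mathcal{T}_X\to\mathrm{gr}_1\mathcal{D}_X$ and hence a surjection of graded $\mathcal{O}_X$-algebras $\psi:\mathrm{Sym}_{\mathcal{O}_X}(\mathcal{T}_X)\twoheadrightarrow\mathrm{gr}\,\mathcal{D}_X$. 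Since $T^*X$ is by definition the relative spectrum of $\mathrm{Sym}_{\mathcal{O}_X}(\mathcal{T}_X)$ over $X$, proving the claim reduces to showing that $\psi$ is injective.

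To prove injectivity I would argue locally, on an affine $U\subset X$ carrying a system of étale coordinates $x_1,\dots,x_d$, so that $Vect(U)$ is $\mathcal{O}(U)$-free on the dual basis $\partial_1,\dots,\partial_d$ with $[\partial_i,\partial_j]=0$. Using the relations to move functions to the left and to reorder the $\partial_i$, the monomials $\partial^\alpha=\partial_1^{\alpha_1}\cdots\partial_d^{\alpha_d}$, $\alpha\in\mathbb{Z}_{\ge0}^d$, span $D(U)$ over $\mathcal{O}(U)$, and those with $|\alpha|\le n$ span $F_nD(U)$; so everything comes down to showing these monomials are $\mathcal{O}(U)$-linearly independent, equivalently that there are no hidden relations. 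This is the one nontrivial point and the place where characteristic $p$ genuinely differs from characteristic $0$: $\mathcal{D}_U$ does \emph{not} act faithfully on $\mathcal{O}(U)$ (e.g. $\partial_i^p$ annihilates $\mathcal{O}(U)$), so independence cannot simply be read off the tautological representation. I would establish it either by a diamond-lemma argument --- the rewriting system given by the two relations is confluent, its overlap ambiguities being resolvable by a short computation --- or by exhibiting a faithful filtered $\mathcal{D}_U$-module with an explicit $\mathcal{O}(U)$-basis on which the $\partial^\alpha$ act by visibly independent operators (the divided-power completion of the structure sheaf of the formal neighbourhood of the diagonal in $U\times U$, the divided powers supplying the otherwise non-invertible factorials), or simply by invoking Rinehart's PBW theorem for Lie--Rinehart algebras with $\mathcal{T}_X$ projective over $\mathcal{O}_X$, which is characteristic-free. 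Either way one gets $F_nD(U)=\bigoplus_{|\alpha|\le n}\mathcal{O}(U)\,\partial^\alpha$, hence $\mathrm{gr}_n\mathcal{D}_X|_U$ free on the symbols of the $\partial^\alpha$ with $|\alpha|=n$, which is exactly degree $n$ of $\mathrm{Sym}_{\mathcal{O}(U)}(Vect(U))$; so $\psi|_U$ is an isomorphism, and therefore $\mathrm{gr}\,\mathcal{D}_X\cong\mathrm{Sym}_{\mathcal{O}_X}(\mathcal{T}_X)\cong\mathcal{O}_{T^*X}$ globally. The main obstacle is precisely this local independence statement; the rest of the argument is formal.
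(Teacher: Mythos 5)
The paper does not prove this claim: it is stated as a known background fact in the subsection on D-modules in characteristic $p$, with a pointer to \cite{BM} and \cite{BMR} for details, so there is no in-paper argument to compare against. On its own merits, your proposal is a correct and essentially standard proof. The order filtration, the multiplicativity $F_i\cdot F_j\subseteq F_{i+j}$, the commutator estimate $[F_i,F_j]\subseteq F_{i+j-1}$ derived from the two defining relations, and the resulting surjection $\mathrm{Sym}_{\mathcal{O}_X}(\mathcal{T}_X)\twoheadrightarrow \mathrm{gr}\,\mathcal{D}_X$ are all correct and characteristic-free. You also correctly isolate the one genuinely delicate point: injectivity of that surjection cannot be read off the tautological action of $\mathcal{D}_X$ on $\mathcal{O}_X$, because in characteristic $p$ that action is not faithful (your example $\partial_i^p$ is exactly right). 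Each of the three routes you offer to close that gap is legitimate: a confluent rewriting (diamond-lemma) argument in étale coordinates; a faithful filtered module built from the divided-power envelope of the diagonal, on which the $\partial^\alpha$ act with visibly independent effect on the divided-power monomials; or Rinehart's PBW theorem for the Lie--Rinehart pair $(\mathcal{O}_X,\mathcal{T}_X)$, which applies since $\mathcal{T}_X$ is locally free. Any one of these would suffice, and none relies on characteristic zero. The only thing you leave genuinely unfinished is choosing and executing one of them, which you acknowledge; in a self-contained write-up the Rinehart citation is the shortest path, while the divided-power module is the most instructive replacement for the lost faithfulness of $\mathcal{O}_X$.
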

					
					\begin{claim}
					In contrary to the characteristic zero situation, the an element in $D(U)$ is not uniqly identified by its action on $O_X(U)$. The morphism $\mathcal{D}_X\rightarrow End(O_X)$ isn't injective. The basic example to keep in mind, is where X is the affine line, and the differential operator is $\partial^p$. It's not zero as an element of $\mathcal{D}_X$, yet it acts by zero on $O_X$.  
					\end{claim}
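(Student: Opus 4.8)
The plan is to reduce at once to the local model $X = \mathbb{A}^1 = \operatorname{Spec} k[t]$. Since $\mathcal{D}_X$ is a sheaf of algebras and the failure of injectivity of $\mathcal{D}_X \to \operatorname{End}(O_X)$ can be detected on a single affine open, and since any smooth $X$ of positive dimension admits an affine open $U$ with étale coordinates $x_1, \dots, x_n$ and dual vector fields $\partial_1, \dots, \partial_n$ satisfying $[\partial_i, x_j] = \delta_{ij}$, it is enough to treat $U$, where $\partial_1^p$ plays the role of $\partial^p$ and the one-variable computation transplants verbatim. So I focus on $D := D(\mathbb{A}^1)$, generated by $t$ and $\partial := \partial/\partial t$ with the sole relation $\partial t - t\partial = 1$ — the Weyl algebra over $k$ — and I claim that the element $\partial^p \in D$ (i) is nonzero, but (ii) acts by the zero operator on $O_X = k[t]$.

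For (i) I would invoke the preceding claim that the order filtration on $\mathcal{D}_X$ has $\operatorname{gr}\mathcal{D}_X \simeq O_{T^*X}$; for $X = \mathbb{A}^1$ this gives $\operatorname{gr} D \simeq k[t,\xi]$, so $D$ has the $k$-basis $\{ t^i \partial^j : i,j \geq 0\}$ and in particular $\partial^p$ is a nonzero basis vector, with principal symbol $\xi^p \neq 0$. This is the only place where genuine structural input (rather than a bare computation) enters: one must know that the relation $[\partial, t] = 1$ does not secretly express $\partial^p$ in terms of lower-order monomials, which is exactly the content of the PBW-type identification $\operatorname{gr}\mathcal{D}_X \simeq O_{T^*X}$. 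As a consistency remark matching the discussion above, the identity $[\partial^n, t] = n\partial^{n-1}$ (an easy induction) gives $[\partial^p, t] = p\,\partial^{p-1} = 0$ in characteristic $p$, so $\partial^p$ is in fact central — as it must be, given that the center of $\mathcal{D}_X$ is $O_{T^*X^{(1)}}$.

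For (ii) the computation is immediate: on a monomial, $\partial^p(t^n) = n(n-1)\cdots(n-p+1)\, t^{n-p}$, and the scalar $n(n-1)\cdots(n-p+1)$ is a product of $p$ consecutive integers, hence divisible by $p$, hence zero in $k$; so $\partial^p(t^n) = 0$ for every $n \geq 0$ and $\partial^p$ maps to $0$ in $\operatorname{End}_k(k[t])$. Combined with (i), this shows $\mathcal{D}_{\mathbb{A}^1} \to \operatorname{End}(O_{\mathbb{A}^1})$ is not injective, and the reduction of the first paragraph promotes this to the statement for arbitrary smooth $X$ of positive dimension. There is no serious obstacle here; the real content of the claim is the contrast with characteristic zero, where $\partial^p$, being a nonzero operator of positive order, of course cannot act by zero.
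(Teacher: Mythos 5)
Your proposal is correct and follows the paper's own route: the paper states the claim and simply points to the example $X=\mathbb{A}^1$, $\partial^p$, without further argument, and you have filled in precisely that example's details (PBW/$\operatorname{gr}\mathcal{D}_X\simeq O_{T^*X}$ for nonvanishing of $\partial^p$, the divisibility of $n(n-1)\cdots(n-p+1)$ by $p$ for the zero action, and the reduction to local coordinates). No gap; this is the intended verification.
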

							
					\begin{thm}
					The center of $\mathcal{D}_X$ is big. It's canonically isomorphic to the sheaf of rings $O_{(T^*X)^{(1)}}$
					\end{thm}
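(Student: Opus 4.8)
The statement is local on $X$ and every sheaf involved is quasi-coherent on $X$, so the plan is to fix an affine open $U\subseteq X$ carrying étale coordinates $x_1,\dots,x_n$ with dual commuting vector fields $\partial_1,\dots,\partial_n$ (so $\partial_i(x_j)=\delta_{ij}$ and $[\partial_i,\partial_j]=0$), to analyze the algebra $D(U)$ there, and finally to glue. First I would exhibit the central elements. For $f\in O(U)$ the element $f^p\in D(U)$ commutes with $O(U)$ trivially and with any vector field $\xi$, since $[\xi,f^p]=\xi(f^p)=pf^{p-1}\xi(f)=0$. For $\xi\in Vect(U)$ let $\xi^{[p]}\in Vect(U)$ be the restricted $p$-th power (the $p$-fold composite of the derivation $\xi$, again a derivation in characteristic $p$; this is a canonical operation, with $\partial_i^{[p]}=0$), and let $\xi^p$ denote the $p$-th power in the associative algebra $D(U)$. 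The key identity is that in any associative $\mathbb{F}_p$-algebra $\mathrm{ad}(a)^p=\mathrm{ad}(a^p)$, which is immediate from $\mathrm{ad}(a)=L_a-R_a$, the commutativity of left and right multiplication, and the freshman's dream. Hence $[\xi^p,f]=\mathrm{ad}(\xi)^p(f)=\xi^{[p]}(f)=[\xi^{[p]},f]$ for $f\in O(U)$, and $[\xi^p,\eta]=(\mathrm{ad}\,\xi)^p(\eta)=[\xi^{[p]},\eta]$ for $\eta\in Vect(U)$ by the defining axiom of the restricted structure on $Vect(U)$; since $O(U)$ and $Vect(U)$ generate $D(U)$, this shows $\xi^p-\xi^{[p]}\in Z(D(U))$.

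Next I would assemble these into one map. Using Jacobson's formula for $(\xi+\eta)^p$ and $(g\xi)^p$ inside $D(U)$ one checks that $\xi\mapsto\xi^p-\xi^{[p]}$ is additive and $p$-linear, $(g\xi)^p-(g\xi)^{[p]}=g^p(\xi^p-\xi^{[p]})$; together with $f\mapsto f^p$ this yields a canonical homomorphism of sheaves of algebras $\mathbf{a}\colon O_{(T^*X)^{(1)}}\to\mathcal{D}_X$ with central image, where $O_{(T^*X)^{(1)}}=\mathrm{Sym}_{O_{X^{(1)}}}\mathcal{T}_{X^{(1)}}$ is viewed as a sheaf on $X$ via the Frobenius $X\to X^{(1)}$ (local model: the polynomial ring $O(U)^p[\partial_1^p,\dots,\partial_n^p]$). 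Because the defining formulas are intrinsic, $\mathbf{a}$ does not depend on the coordinates and the local maps glue; what remains is to show $\mathbf{a}$ is an isomorphism onto $Z(\mathcal{D}_X)$.

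For this I would first use the PBW basis: $D(U)$ is a free left $O(U)$-module on $\{\partial^{\underline a}\}_{\underline a\in\mathbb{Z}_{\ge 0}^n}$. Since $O(U)$ is free over $O(U)^p$ on $\{x^{\underline b}:0\le b_i<p\}$ (étale coordinates) and the central elements $\partial_i^p$ are algebraically independent over $O(U)^p$ (again by PBW), one gets that $D(U)$ is free over the commutative subring $A:=\mathbf{a}\bigl(O_{(T^*X)^{(1)}}\bigr)=O(U)^p[\partial_1^p,\dots,\partial_n^p]$ with basis $\{x^{\underline b}\partial^{\underline a}:0\le b_i,a_i<p\}$, of rank $p^{2n}$; in particular $\mathbf{a}$ is injective. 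To see $A$ exhausts the center, take $z=\sum_{\underline a}f_{\underline a}\partial^{\underline a}\in Z(D(U))$: from $[z,x_j]=\sum_{\underline a}a_jf_{\underline a}\partial^{\underline a-e_j}=0$ one gets $f_{\underline a}=0$ unless $p\mid a_i$ for all $i$, so $z=\sum_{\underline c}g_{\underline c}\partial^{p\underline c}$; then $[z,\partial_j]=-\sum_{\underline c}\partial_j(g_{\underline c})\partial^{p\underline c}=0$ forces every $g_{\underline c}$ to be killed by all vector fields, and for $U$ smooth over the perfect field $k$ one has $\ker(d\colon O_U\to\Omega^1_U)=O(U)^p$, so $g_{\underline c}\in O(U)^p$ and $z\in A$. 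Gluing over an affine cover then gives the canonical isomorphism $Z(\mathcal{D}_X)\cong O_{(T^*X)^{(1)}}$; the freeness of rank $p^{2n}$ also records that $\mathcal{D}_X$ is Azumaya over $(T^*X)^{(1)}$.

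The part I expect to be the real work is this last step — confirming that the evident central elements exhaust the center and that $\mathbf{a}$ is injective. There is no single deep idea, but it genuinely uses smoothness twice (freeness of $O_U$ over $O_U^p$ and the identification $\ker d=O_U^p$), the PBW basis, and some care with the Frobenius twist so that ``$O_{(T^*X)^{(1)}}$'' is the correct sheaf on $X$ and the local isomorphisms are mutually compatible. A slicker alternative to the exhaustion computation is to observe that $\mathcal{D}_X$ restricts to a matrix algebra over the residue field of the generic point of each fibre of $(T^*X)^{(1)}\to X^{(1)}$, hence is a sheaf of central simple algebras there, which forces its center to be exactly $O_{(T^*X)^{(1)}}$.
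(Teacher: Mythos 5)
The paper does not give a proof of this theorem: it appears in the background section as a known fact, quoted from the literature (cf.\ \cite{BMR}). Your argument is correct and is essentially the standard one found there: exhibit the central elements $f^p$ and $\xi^p-\xi^{[p]}$, check via the identity $\mathrm{ad}(a)^p=\mathrm{ad}(a^p)$ and Jacobson's formula that $f\mapsto f^p$, $\xi\mapsto\xi^p-\xi^{[p]}$ assemble into a canonical algebra homomorphism $O_{(T^*X)^{(1)}}\to Z(\mathcal{D}_X)$, and then use the PBW basis together with smoothness (freeness of $O_U$ over $O_U^p$ in \'etale coordinates, and $\ker(d\colon O_U\to\Omega^1_U)=O_U^p$ over a perfect field) to show this map is an isomorphism onto the full center. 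Your handling of each step, including the exhaustion computation via $[z,x_j]=0$ and $[z,\partial_j]=0$, is sound.
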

					
					\begin{thm}
							$\mathcal{D}_X$, and $\mathcal{D}_X^{\mathcal{L}}$ are both Azumaya algebras on $T^*X^{(1)}$.
					\end{thm}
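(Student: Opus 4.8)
The plan is to verify directly the two conditions defining an Azumaya algebra over $T^*X^{(1)}$: that $\mathcal{D}_X$ (resp. $\mathcal{D}_X^{\mathcal{L}}$) is a coherent sheaf of $O_{T^*X^{(1)}}$-algebras which is locally free of finite rank, and that the multiplication map $\mathcal{A}\otimes_{O_{T^*X^{(1)}}}\mathcal{A}^{op}\to\mathcal{E}nd_{O_{T^*X^{(1)}}}(\mathcal{A})$ is an isomorphism, where $\mathcal{A}$ is regarded as an algebra over its center $O_{T^*X^{(1)}}$ via the preceding theorem. I would first dispose of the twisted case: both conditions are local on $T^*X^{(1)}$, and over an open $U\subset X$ on which $\mathcal{L}$ is trivial a trivialization induces an isomorphism of sheaves of algebras $\mathcal{D}_X^{\mathcal{L}}|_{T^*U^{(1)}}\cong\mathcal{D}_U=\mathcal{D}_X|_{T^*U^{(1)}}$, so it suffices to treat $\mathcal{D}_X$.

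Step 1 (local freeness of rank $p^{2\dim X}$). Here I would use the order filtration, for which $\mathrm{gr}\,\mathcal{D}_X\cong O_{T^*X}=\mathrm{Sym}^{\bullet}_{O_X}\mathcal{T}_X$ by the quantization claim above. The central inclusion $O_{T^*X^{(1)}}\hookrightarrow\mathcal{D}_X$ is filtered — $O_X^{(1)}$ in degree $0$, the classes of $p$-th powers of vector fields in degree $p$ — with associated graded the image of relative Frobenius $O_{T^*X^{(1)}}\to O_{T^*X}$. The commutative fact that $O_{T^*X}$ is locally free over this image of rank $p^{\dim T^*X}=p^{2\dim X}$ — which holds since relative Frobenius of the smooth variety $T^*X$ of dimension $2\dim X$ is finite locally free of that degree — then lifts, by a standard Rees / filtered-to-graded argument, to the statement that $\mathcal{D}_X$ is coherent and locally free over $O_{T^*X^{(1)}}$ of rank $p^{2\dim X}$.

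Step 2 (the isomorphism condition). Since both sides are locally free of the same rank, this may be checked Zariski-locally on $X$; a smooth variety is Zariski-locally étale over affine space, so I may assume there is an étale map $X\to\mathbb{A}^n$ with $n=\dim X$. Crystalline differential operators are compatible with étale localization, so $\mathcal{D}_X$ is then the base change of $\mathcal{D}_{\mathbb{A}^n}$ along $T^*X^{(1)}\to T^*\mathbb{A}^{n\,(1)}$, and moreover $\mathcal{D}_{\mathbb{A}^n}\cong\mathcal{D}_{\mathbb{A}^1}^{\otimes_k n}$ with center $Z(\mathcal{D}_{\mathbb{A}^1})^{\otimes_k n}$. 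The Azumaya property being stable under arbitrary base change and under $\otimes_k$, it remains to see that $\mathcal{D}_{\mathbb{A}^1}=k[t]\langle\partial\rangle$, $[\partial,t]=1$, is Azumaya over $Z=k[t^p,\partial^p]$; it is free over $Z$ of rank $p^2$ on the monomials $t^i\partial^j$, $0\le i,j<p$, so the multiplication map is a map of free $Z$-modules of equal rank, hence an isomorphism iff it is so on every closed-point fibre. A suitable affine substitution $t\mapsto t+\alpha,\ \partial\mapsto\partial+\beta$ with $\alpha^p=a,\ \beta^p=b$ is an algebra automorphism of $\mathcal{D}_{\mathbb{A}^1}$ carrying the fibre over $(a,b)$ to the one over $(0,0)$, so it is enough to check $\mathcal{D}_{\mathbb{A}^1}/(t^p,\partial^p)$ is central simple. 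This $p^2$-dimensional algebra acts on $k[t]/(t^p)\cong k^p$ by $t=$ multiplication, $\partial=$ differentiation, and the action is faithful (no nonzero $\sum_{i,j<p}c_{ij}t^i\partial^j$ annihilates all of $1,t,\dots,t^{p-1}$), giving an injection of $k$-algebras $\mathcal{D}_{\mathbb{A}^1}/(t^p,\partial^p)\hookrightarrow\mathrm{Mat}_p(k)$ between spaces of dimension $p^2$, hence an isomorphism; a matrix algebra is central simple, and unwinding the reductions finishes the proof for $\mathcal{D}_X$ and therefore for $\mathcal{D}_X^{\mathcal{L}}$.

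I expect the part needing the most care — rather than the most ideas — to be the interface with the center in Step 1: matching the order filtration on $\mathcal{D}_X$ against the Frobenius filtration on the central subsheaf so as to read off local freeness of exactly rank $p^{2\dim X}$, together with the clean assertion that crystalline differential operators base change along étale morphisms, which is what makes the fibrewise computation honestly reduce to the char-$p$ Weyl algebra. Once those reductions are secured, the computational heart — that $k[t]\langle\partial\rangle/(t^p,\partial^p)\cong\mathrm{Mat}_p(k)$ — is immediate.
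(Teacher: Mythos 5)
The paper does not actually prove this theorem: it is stated in the Background section as a known result and is, in substance, Theorem 2.2.3 of \cite{BMR}, which the paper is implicitly citing. Your argument is a correct and complete reconstruction of the standard proof from \cite{BMR}: reduce the twisted case to the untwisted one by local triviality of $\mathcal{L}$; obtain local freeness of rank $p^{2\dim X}$ over the center by comparing the order filtration on $\mathcal{D}_X$ with the Frobenius filtration on the central subsheaf; reduce the isomorphism condition to the Weyl algebra $\mathcal{D}_{\mathbb{A}^1}$ via étale maps to $\mathbb{A}^n$ and the tensor-product decomposition $\mathcal{D}_{\mathbb{A}^n}\cong\mathcal{D}_{\mathbb{A}^1}^{\otimes n}$, use the affine substitution to move any closed-point fibre to the origin, and identify $k[t]\langle\partial\rangle/(t^p,\partial^p)$ with $\mathrm{Mat}_p(k)$ via its faithful action on $k[t]/(t^p)$. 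One small point worth flagging for completeness: when reducing the fibrewise check to the origin, the substitution $t\mapsto t+\alpha$, $\partial\mapsto\partial+\beta$ requires $p$-th roots of the coordinates of the closed point, which exist in the residue field because $k$ (hence every finite extension of it) is perfect; you use this implicitly and it is harmless given the paper's standing hypotheses, but it is the place where perfection of the base field enters.
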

				
						Given an Azumaya algebra $\mathcal{A}$ over a variety Y, we can consider the category of coherent $\mathcal{A}$ modules. 
						
						When an Azumaya algebra is split over Y. That is, $\mathcal{A}\sim O_Y$, then the category of coherent $\mathcal{A}$ modules is equivalent to Coh(Y). 
						
						Consider Y:=$T^*X$. 
						
						$\mathcal{D}_X^{\mathcal{L}}$ and $\mathcal{D}_X$ are Morita equivalent. There is a canonical equivalence between the category of $\mathcal{D}_X$ modules and the category of $\mathcal{D}_X^{\mathcal{L}}$ modules. 
						
					\begin{thm}	
						$\mathcal{A}:=\mathcal{D}_X$ doesn't split over $T^*X$, but it does splits on a formal neighborhood of the zero section of $T^*X$. 
										
					The splitting implies the equivalence:
					\begin{equation}
					\label{D-coh}
					\mathcal{D}_X^{\mathcal{L}}-mod_0\simeq Coh_0(T^*X)
					\end{equation}
					Where in both side we restrict to sheaves with support on the formal neighborhood of the zero section $X\subset T^*X$.
					\end{thm}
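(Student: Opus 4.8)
The plan is to deduce all three assertions from the behaviour of the Brauer class over $T^{*}X^{(1)}$ of the Azumaya algebra $\mathcal{A}:=\mathcal{D}_{X}$ --- equivalently of $\mathcal{A}:=\mathcal{D}_{X}^{\mathcal{L}}$, which by the Morita equivalence $\mathcal{D}_{X}\text{-mod}\simeq\mathcal{D}_{X}^{\mathcal{L}}\text{-mod}$ recalled above carries the same Brauer class. Write $n=\dim X$, so $\mathcal{A}$ is locally free of rank $p^{2n}$ over its centre $\mathcal{O}_{T^{*}X^{(1)}}$, i.e. Azumaya of degree $p^{n}$. I would first establish the splitting on the formal neighbourhood of the zero section, and then read off both the failure of splitting on all of $T^{*}X$ and the equivalence \eqref{D-coh}.

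\emph{Restriction to the zero section.} The zero section $X^{(1)}\hookrightarrow T^{*}X^{(1)}$ is cut out inside the centre by the ideal generated by the $p$-curvature functions $\xi^{p}-\xi^{[p]}$ with $\xi\in T_{X}$ (these are precisely the fibre coordinates of $T^{*}X^{(1)}$). Hence $\mathcal{A}|_{X^{(1)}}$ is the sheaf of restricted twisted differential operators, which acts on $\mathcal{L}$; viewing $\mathcal{L}$ through the finite flat morphism $\mathrm{Fr}\colon X\to X^{(1)}$ as the rank-$p^{n}$ bundle $\mathrm{Fr}_{*}\mathcal{L}$ on $X^{(1)}$, this action is an $\mathcal{O}_{X^{(1)}}$-algebra homomorphism $\mathcal{A}|_{X^{(1)}}\to End_{\mathcal{O}_{X^{(1)}}}(\mathrm{Fr}_{*}\mathcal{L})$; being a nonzero map of Azumaya algebras of the same degree over the connected base $X^{(1)}$, it is an isomorphism fibrewise, hence an isomorphism (a form of the Cartier isomorphism; see \cite{BMR}). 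In particular $\mathcal{A}$ is split over the zero section, with splitting bundle $\mathrm{Fr}_{*}\mathcal{L}$.

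\emph{Propagating to the formal neighbourhood.} Let $\widehat{Y}$ be the formal completion of $T^{*}X^{(1)}$ along $X^{(1)}$, with ideal sheaf $\mathcal{I}$; since the conormal bundle of the zero section of a cotangent bundle is the tangent bundle, $\mathcal{I}^{m}/\mathcal{I}^{m+1}\cong S^{m}(T_{X})^{(1)}$. I would lift the splitting bundle $\mathrm{Fr}_{*}\mathcal{L}$ step by step over the infinitesimal neighbourhoods $Y_{m}$: given a locally free $\mathcal{A}|_{Y_{m}}$-module restricting to $\mathrm{Fr}_{*}\mathcal{L}$ on $X^{(1)}$, the obstruction to extending it over $Y_{m+1}$ lies, by standard deformation theory together with the Morita identification of the previous step, in $Ext^{2}_{\mathcal{A}|_{X^{(1)}}}\big(\mathrm{Fr}_{*}\mathcal{L},\,\mathrm{Fr}_{*}\mathcal{L}\otimes S^{m+1}(T_{X})^{(1)}\big)\cong H^{2}\big(X^{(1)},S^{m+1}(T_{X})^{(1)}\big)$, which is a Frobenius twist of $H^{2}(X,S^{m+1}T_{X})$. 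These groups vanish: $\bigoplus_{m\geq 0}H^{>0}(X,S^{m}T_{X})=H^{>0}(T^{*}X,\mathcal{O}_{T^{*}X})$, and the latter is $0$ because $\pi$ exhibits $T^{*}X$ as a symplectic --- in particular crepant --- resolution of an affine variety (the finite cover of $\mathcal{N}_{L}$ that it resolves), which therefore has rational singularities, so $R^{>0}\pi_{*}\mathcal{O}_{T^{*}X}=0$; for $X=G/B$ this is the classical vanishing for the Springer resolution. Hence all obstructions vanish, the inverse limit $\mathcal{E}$ of the lifts is a locally free sheaf on $\widehat{Y}$, and $\mathcal{A}|_{\widehat{Y}}\cong End_{\mathcal{O}_{\widehat{Y}}}(\mathcal{E})$; the bundle $\mathcal{E}$ is well defined up to tensoring by a line bundle on $\widehat{Y}$. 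I expect this propagation step --- and specifically the cohomology vanishing for a general parabolic $P$ --- to be the main obstacle; the remaining points are comparatively formal.

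\emph{Non-splitting over $T^{*}X$, and the equivalence.} If $\mathcal{A}$ split over all of $T^{*}X^{(1)}$ it would be $End$ of a rank-$p^{n}$ bundle there; ruling this out is exactly the nontriviality of the (order-$p$) class $[\mathcal{A}]\in\mathrm{Br}(T^{*}X^{(1)})$, which one detects by restricting to a suitable subvariety and computing $p$-curvature explicitly, or by noting that a global splitting would make $\Gamma(T^{*}X,\mathcal{D}_{X})\text{-mod}$ equivalent to the category of modules over $\Gamma$ of the structure sheaf of the base, contradicting the representation theory of the corresponding central reduction of $U(\mathfrak{g})$ (carried out in \cite{BMR}). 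Finally, for \eqref{D-coh}: a finitely generated $\mathcal{D}_{X}^{\mathcal{L}}$-module whose support lies on the formal neighbourhood of the zero section is the same datum as a coherent module over $\mathcal{A}|_{\widehat{Y}}=End_{\mathcal{O}_{\widehat{Y}}}(\mathcal{E})$, and Morita theory for $\mathcal{E}$ yields an exact equivalence $\mathcal{A}|_{\widehat{Y}}\text{-mod}\;\xrightarrow{\ \sim\ }\;\mathcal{O}_{\widehat{Y}}\text{-mod}$, $M\mapsto Hom_{\mathcal{A}}(\mathcal{E},M)$, with quasi-inverse $N\mapsto\mathcal{E}\otimes_{\mathcal{O}_{\widehat{Y}}}N$. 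Since coherent $\mathcal{O}_{\widehat{Y}}$-modules are exactly $Coh_{0}(T^{*}X)$ (using $T^{*}X^{(1)}\simeq T^{*}X$) and the functor is exact, this is an equivalence of abelian categories, hence of their bounded derived categories; it preserves the support conditions because Morita equivalence preserves support over the common centre, and it is canonical up to the line-bundle ambiguity in $\mathcal{E}$.
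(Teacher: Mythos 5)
The paper does not prove this theorem; it is stated as a background fact imported from \cite{BMR}\cite{BM}, so there is no internal argument to compare yours against, and I will assess the sketch on its own merits. Your first step is right and is exactly the Cartier-type observation used in \cite{BMR}: the action map $\mathcal{A}|_{X^{(1)}}\to End_{\mathcal{O}_{X^{(1)}}}(\mathrm{Fr}_*\mathcal{L})$ is a homomorphism of Azumaya algebras of the same degree $p^n$ over a connected base, hence an isomorphism, giving the splitting bundle $\mathrm{Fr}_*\mathcal{L}$ on the zero section. The closing Morita step, deducing the equivalence \eqref{D-coh} from a splitting bundle on the formal neighbourhood, is also correct.

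The genuine gap is in the propagation step, and it matters because the theorem is stated for an \emph{arbitrary} smooth $X$. Your obstruction argument places the lifting obstructions in $H^{2}(X,S^{m}T_X)$ and then invokes the vanishing $H^{>0}(T^*X,\mathcal{O}_{T^*X})=0$, which you justify only for $X=G/P$ by appealing to rational singularities of symplectic resolutions. That vanishing fails for general smooth $X$: for instance, if $X$ is a product of two curves of genus at least $2$, then $H^{2}(X,T_X)\cong H^{0}(X,\Omega^{1}_{X}\otimes K_X)^{*}\neq 0$. So the deformation-theoretic route cannot establish the theorem in the stated generality; \cite{BMR} prove the formal-neighbourhood splitting for \emph{any} smooth $X$, and therefore must (and do) avoid this kind of cohomological vanishing hypothesis. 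Even in the case $X=G/P$ that the paper actually uses, the rational-singularities input you quote (Kaledin) is a characteristic-zero theorem; one should either invoke a direct char-$p$ vanishing or a specialization/semicontinuity argument for $p\gg 0$. Finally, the non-splitting of $\mathcal{D}_X$ over all of $T^*X^{(1)}$ is left as a one-sentence remark; the idea of restricting to a cotangent fibre and observing the non-split Weyl algebra does work and is worth making explicit, while the alternative ``contradiction with representation theory'' argument as phrased is too vague to stand on its own.
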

					
					In the special case that $X=T^*\mathcal{B}$ (Or $T^*G/P$) this can be generalized.
					\begin{thm}
					
						Consider $X=T^*\mathcal{B}$  
						
						Consider the springer map $\pi:T^*G/B\rightarrow \mathcal{N}$. 
									$\mathcal{D}_{G/B}^{\lambda}$ splits on the formal neighborhood of every fiber of the springer resolution $\pi$. Hence obtain an equivalence 
					\begin{equation}
					\mathcal{D}_{G/B}^{\mathcal{L}}-mod_{e}\simeq Coh_e(T^*(G/B))
					\end{equation}
					
					Where the subscript stands for a support condition on a formal neighborhood of $\pi^{-1}(e)$ for $e\in \mathcal{N}$. 
					\end{thm}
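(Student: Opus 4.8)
The plan is to reduce the statement to the vanishing of a Brauer class on a formal scheme, and then to propagate that vanishing from the zero section — the case already treated — to an arbitrary Springer fibre, using the contracting $\mathbb{G}_m$-actions attached to the nilpotent cone. Recall the general mechanism: an Azumaya algebra $\mathcal{B}$ on a scheme or formal scheme $Y$ is split, i.e.\ of the form $\mathcal{E}nd_{\mathcal{O}_Y}(M)$ for a vector bundle $M$, precisely when its class in $\mathrm{Br}(Y)$ vanishes, and such an $M$ induces a Morita equivalence $\mathcal{B}\text{-}\mathrm{mod}\simeq\mathcal{O}_Y\text{-}\mathrm{mod}$; so the two assertions of the theorem are equivalent, and the equivalence of categories is produced from the splitting bundle exactly as in the already-established case of the zero section. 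Since $\pi\colon T^*(G/B)\to\mathcal{N}$ is projective, completion commutes with base change along it, so the formal neighbourhood $Y_e:=\widehat{T^*(G/B)}_{\pi^{-1}(e)}$ equals $T^*(G/B)\times_{\mathcal{N}}\mathrm{Spf}\,\widehat{\mathcal{O}}_{\mathcal{N},e}$, a formal scheme proper over a complete local ring and carrying a relatively ample line bundle; in particular (Gabber's theorem) its Brauer group is computed by torsion classes in $H^2_{\mathrm{et}}$, and it suffices to show that $[\mathcal{D}_{G/B}^{\lambda}]$ dies in $\mathrm{Br}(Y_e)$.

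Two extreme cases are immediate. For $e=0$ one has $\pi^{-1}(0)=G/B$, the zero section, and the required vanishing is the preceding theorem. For $e$ regular nilpotent, $\pi$ is an isomorphism near $\pi^{-1}(e)$ and $\widehat{\mathcal{O}}_{\mathcal{N},e}$ is a regular complete local ring with algebraically closed residue field, so $\mathrm{Br}(Y_e)=0$. The pair $(\pi,\mathcal{D}_{G/B}^{\lambda})$ is $G$-equivariant for the natural action on $G/B$, so $[\mathcal{D}_{G/B}^{\lambda}]|_{Y_e}$ depends only on the orbit $G\!\cdot\!e$, and one may fix one representative $e$ in each nilpotent orbit. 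To pass from $0$ to such an $e$ I would bring in the Slodowy slice $S=e+\mathfrak{z}_{\mathfrak{g}}(f)\subset\mathfrak{g}\cong\mathfrak{g}^*$ attached to an $\mathfrak{sl}_2$-triple $(e,h,f)$, together with the Kazhdan $\mathbb{G}_m$-action, which contracts $S\cap\mathcal{N}$ to the point $e$. Pulling back along $\pi$ gives the Slodowy resolution $\widetilde S:=\pi^{-1}(S\cap\mathcal{N})\to S\cap\mathcal{N}$, a symplectic resolution with a $\mathbb{G}_m$-action contracting $\widetilde S$ onto the projective variety $\pi^{-1}(e)$; étale-locally near $\pi^{-1}(e)$ the cotangent bundle $T^*(G/B)$ splits off $\widetilde S$ from the orbit directions, so that after completion $Y_e$ is the completion of $\widetilde S$ along $\pi^{-1}(e)$ times a formal polydisc. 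The scaling action on the cotangent fibres, whose fixed locus is the zero section and for which $T^*(G/B)$ is the full attracting set, together with the grading on the affine cone $\mathcal{N}$, is then used to connect $[\mathcal{D}_{G/B}^{\lambda}]$ near $\pi^{-1}(e)$ with its (vanishing) restriction near $\pi^{-1}(0)$: concretely, one transports an equivariant splitting module, or equivalently compares the two obstruction classes along the degeneration $\pi^{-1}(\overline{\mathbb{G}_m\!\cdot\!e})\to\mathbb{A}^1$ whose special fibre is the zero section and whose generic fibre is $\pi^{-1}(e)$.

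The hard part will be making this comparison rigorous for the $p$-primary part of the Brauer group, which is where the whole obstruction lives: the class $[\mathcal{D}_{G/B}^{\lambda}]$ is $p$-primary, annihilated by $p^{\dim G/B}$, and for $p$-power coefficients in characteristic $p$ one has neither homotopy invariance of the Brauer group nor proper base change — indeed these must fail, since $\mathcal{D}_{G/B}^{\lambda}$ does not split globally on $T^*(G/B)$, and the scaling $\mathbb{G}_m$ acts on the relevant base $T^*(G/B)^{(1)}$ through its $p$-th power because of the Frobenius twist. The way through is that every fibre that occurs — the flag variety $G/B$ and the Springer fibres $\pi^{-1}(e)$ — admits an affine paving (Spaltenstein), which controls the coherent cohomology entering the degeneration and the deformation-theoretic obstructions to lifting a splitting module through the infinitesimal thickenings $Y_e^{(n)}$; combining this with the prime-to-$p$ statement, handled in the usual topological way by the $\mathbb{G}_m$-contraction onto $\pi^{-1}(e)$, and with the explicit description (\cite{BMR}, \cite{BMR2}) of $\Gamma(\mathcal{D}_{G/B}^{\lambda})$ completed at the $p$-character of $e$ as an Azumaya algebra over $\widehat{\mathcal{O}}_{\mathcal{N},e}^{(1)}$ whose splitting on a transverse slice is exhibited by a Frobenius-pushforward argument extending the zero-section computation, yields $[\mathcal{D}_{G/B}^{\lambda}]|_{Y_e}=0$. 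The assertion for $T^*(G/P)$ is proved in the same way, with $G/B$ replaced by $G/P$, $\mathcal{N}$ by the (finite cover of the) image $\mathcal{N}_L$ of the parabolic Springer map, and Springer fibres by parabolic Springer fibres.
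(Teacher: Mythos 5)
The paper does not prove this theorem: it is stated as recalled background, attributed to \cite{BMR} (and \cite{BMR2}), and the author immediately adds ``We won't use this generality though in this work.'' So there is no in-paper proof to compare against; what follows evaluates your sketch on its own merits.

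You correctly reformulate the claim as vanishing of a Brauer class on the formal neighbourhood $Y_e$, correctly dispose of the two extreme cases $e=0$ and $e$ regular, and correctly observe that the naive specialization along $\pi^{-1}(\overline{\mathbb{G}_m\cdot e})\to\mathbb{A}^1$ cannot by itself kill the $p$-primary part of the class --- precisely because that would force a global splitting of $\mathcal{D}_{G/B}^{\lambda}$ on $T^*(G/B)^{(1)}$, which is false. But this means the argument you designate as your primary strategy (transport the class from $\pi^{-1}(e)$ to $\pi^{-1}(0)$ by the contracting $\mathbb{G}_m$) does not go through, and the paragraph where you acknowledge this is not a patch but a restart. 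The ideas you then invoke --- affine pavings of Springer fibres, obstruction theory for lifting a splitting bundle through the infinitesimal thickenings $Y_e^{(n)}$, Frobenius pushforward on a transverse slice --- are in fact the relevant ingredients of the actual argument in \cite{BMR}, but you do not carry any of them out, and one of them is misstated: Spaltenstein's affine paving of $\mathcal{B}_e$ gives parity vanishing of \emph{\'etale} (constructible) cohomology, not the vanishing of \emph{coherent} cohomology groups $H^{>0}(\mathcal{B}_e,\mathcal{O})$ or $H^2$ of the successive conormal ideals that obstruction theory actually needs; the coherent vanishing is a separate input (ultimately rational singularities of $\mathcal{N}$ and the theorem on formal functions), not a corollary of the paving.

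The route actually taken in \cite{BMR} is direct rather than by degeneration: one exhibits a splitting bundle on the reduced fibre (for $e=0$ this is $Fr_*\mathcal{O}_{G/B}(\lambda)$, and for general $e$ one uses the equivariant structure under the centralizer of $e$ together with the Kazhdan $\mathbb{G}_m$-action to reduce to the zero section of the Slodowy resolution), and then lifts this bundle over each square-zero thickening $Y_e^{(n)}\hookrightarrow Y_e^{(n+1)}$ by checking that the obstruction class in $H^2$ of the relevant coherent sheaf on $\pi^{-1}(e)$ vanishes. No Brauer-theoretic specialization or comparison along a one-parameter degeneration enters. If you want to turn your sketch into a proof, I would drop the specialization step entirely, make the contraction onto $\pi^{-1}(e)$ play only the role of producing a splitting on the \emph{reduced} fibre, and then supply the coherent $H^2$-vanishing that drives the lifting --- citing or proving it rather than deriving it from the affine paving.
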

					
					We won't use this generality though in this work.		
			
			\subsection*{Localization theorem in characteristic p}
			
			Let $\mathfrak{h}$ be the universal Cartan subalgebra of $\g$. Let $B\subset G$ be any borel subgroup of G, and $\mathfrak{b}$ be its lie algbera, then there is a canonical isomorphism $\mathfrak{h}\simeq \mathfrak{b}/[\mathfrak{b},\mathfrak{b}]$. Let $T\subset B$ be a maximal torus contained in B. Let $\mathfrak{t}$ be its lie algebra.
			
			Let W be the Weyl group. W acts on the weight lattice $\Lambda$ by the dot action. 
						
			Define $\xi$ to be the center of $U(\g)$. The center consists of two parts. Let $\xi_{HC}$ be the Harich-Chandra center. $\xi_{HC}:=U(\g)^G\simeq S(\mathfrak{t})^{(W,\bullet)}$. $S(\mathfrak{t})$ is the symmetric algebra of $\mathfrak{t}$, and we take the $W$ invariants for the $\bullet$ action.
			The second part is the Frobenius center $\xi_{Fr}$, also called p-center.   $\xi_{Fr}$ is an algebra generated by the elements of the form $x^p-x^{[p]}$ $x\in \g$, where $x\mapsto x^{[p]}$ is the restricted power map, which is characterized by $ad(x^{[p]})=ad(x)^p$. Maximal ideals of $\xi_{Fr}$ are in bijection with points of $\g^*\simeq \g$. $\xi_{Fr}\simeq S(\g^{(1)})$. That is functions on $\g^{*(1)}$, (the frobenius twist of $\g^*$).
				
				$\xi$ is build from these two parts. When $p>>0$, $\xi_{HC}\otimes_{\xi_{Fr}\cap \xi_{HC}}\xi_{Fr}\simeq \xi$. In other words, to give a central character of $U(\g)$, we need to give a pair of elements $(\lambda,e)\in \mathfrak{t^*}\times \g^{*(1)}$, which are compatible. 
				
				In particular lets restrict to the case that e is nilpotent and $\lambda$ integral weight.

				\bs
				
				Let $\lambda\in \Lambda$. There is a natural map $\Lambda/p\Lambda\rightarrow \mathfrak{h}^*/W$ $\lambda\mapsto d\lambda$ mod W. Hence $\lambda$ defines a maximal ideal of $\xi_{HC}$. We define $U(\g)_{\lambda}:=U\otimes_{\xi_{HC}}k$. 
				
				Let $U(\g)_{\lambda}-mod$ be the category of finitely generated $U(\g)_{\lambda}$ modules.
				
				$\lambda\in \Lambda$ is called \emph{p-regular}, if the stabilizer in W of $\lambda+p\Lambda\in \Lambda/p\Lambda$ is trivial. 
				
				\bs
				
				Given a compatible pair $\lambda\in \Lambda,e\in \g^*$. We consider $U_{\lambda}^{\hat{e}}$-mod. The category of finitely generated $U_{\lambda}$ modules with generalized p character $e$. That is, $U_{\lambda}$ modules for which there exists a power of the maximal ideal e in $\xi_{Fr}$ which kills them. We also denote this category by $U_{\lambda}-mod_{e}$.

				\subsubsection*{Derived localization in char p}
								
						Let $\lambda\in \Lambda$.
						
						\begin{thm}
						There is a natural isomorphism $\Gamma(D_{{G/B},\lambda})\simeq U(\g)_{\lambda}$. 
						\end{thm}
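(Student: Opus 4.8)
The plan is to produce the map by differentiating the $G$-action on $G/B$, check that it descends to $U(\g)_\lambda$, and then establish bijectivity by passing to associated graded rings, where both sides are controlled by the Springer resolution $\pi\colon T^*(G/B)\to\mathcal{N}$. \textbf{The map.} The $G$-action on $G/B$ differentiates to a Lie algebra homomorphism $\g\to\Gamma(G/B,T_{G/B})$; because the twist $\mathcal{L}_\lambda$ is $G$-equivariant this lifts canonically to a Lie algebra homomorphism $\g\to\Gamma(G/B,\mathcal{D}_{G/B,\lambda})$ with image in operators of order $\le 1$, hence extends to an algebra homomorphism $\phi_\lambda\colon U(\g)\to\Gamma(G/B,\mathcal{D}_{G/B,\lambda})$, filtered for the PBW filtration on the source and the order filtration on the target. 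Since twisting does not change symbols, $\mathrm{gr}\,\mathcal{D}_{G/B,\lambda}\cong\mathcal{O}_{T^*(G/B)}$ canonically and independently of $\lambda$; thus $\mathrm{gr}\,\Gamma(G/B,\mathcal{D}_{G/B,\lambda})$ embeds into $\Gamma(T^*(G/B),\mathcal{O})$, and under this embedding $\mathrm{gr}\,\phi_\lambda$ is identified with the pullback $\mu^*\colon k[\g^*]=S(\g)\to\Gamma(T^*(G/B),\mathcal{O})$ along the moment map $\mu\colon T^*(G/B)\to\g^*$, which (under $\g\cong\g^*$) is the Springer map $\pi$.

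\textbf{Descent to $U(\g)_\lambda$.} For $z$ in the Harish--Chandra center $\xi_{HC}$ of positive degree, $\mathrm{gr}\,\phi_\lambda$ sends its symbol into the image of $k[\g^*]^G_+$ under $\mu^*=\pi^*$; since $\pi$ factors through $\mathcal{N}$ and $k[\g^*]^G_+$ cuts out $\mathcal{N}$ (Kostant, for $p\gg 0$), this image is zero. Hence $\phi_\lambda(z)$ has order $\le 0$, i.e.\ $\xi_{HC}$ acts on $\Gamma(G/B,\mathcal{D}_{G/B,\lambda})$ through a character; restricting $\phi_\lambda$ to the big cell $B^-B/B\cong\mathbb{A}^{\dim G/B}$, where $\mathcal{D}_{G/B,\lambda}$ becomes an ordinary Weyl algebra, one identifies this character with the one defining $U(\g)_\lambda$, exactly as in the classical Beilinson--Bernstein theorem. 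So $\phi_\lambda$ factors through an algebra homomorphism $\bar\phi_\lambda\colon U(\g)_\lambda\to\Gamma(G/B,\mathcal{D}_{G/B,\lambda})$.

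\textbf{Bijectivity.} Both sides are exhaustively, non-negatively filtered with finite-dimensional graded pieces (on the target, $\mathrm{gr}_n$ embeds into $k[\mathcal{N}]_n$), so it suffices that $\mathrm{gr}\,\bar\phi_\lambda$ be an isomorphism. The cohomology vanishing $H^{>0}(T^*(G/B),\mathcal{O})=0$ — equivalently $H^{>0}(G/B,\mathrm{Sym}^n T_{G/B})=0$ for all $n$, which holds for $p\gg 0$ because $R\pi_*\mathcal{O}=\mathcal{O}_{\mathcal{N}}$ and $\mathcal{N}$ is affine — combined with the long exact sequences attached to $0\to\mathcal{D}_{G/B,\lambda,\le n-1}\to\mathcal{D}_{G/B,\lambda,\le n}\to\mathrm{Sym}^n T_{G/B}\to 0$ yields $\mathrm{gr}\,\Gamma(G/B,\mathcal{D}_{G/B,\lambda})=\Gamma(T^*(G/B),\mathcal{O})=k[\mathcal{N}]$. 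On the source, $\mathrm{gr}\,U(\g)=k[\g^*]$ and $\mathrm{gr}\,\xi_{HC}=k[\g^*]^G$ (same polynomial generators, for $p\gg 0$); for each such generator $z_i$ of $\xi_{HC}$ the element $z_i-\lambda(z_i)\in\mathfrak{m}_\lambda$ has the same symbol as $z_i$, so $\mathrm{gr}\,\mathfrak{m}_\lambda\supseteq k[\g^*]^G_+$, with equality since $k[\g^*]^G_+$ is maximal in $k[\g^*]^G$; hence $\mathrm{gr}(\mathfrak{m}_\lambda U(\g))\supseteq(k[\g^*]^G_+)=I(\mathcal{N})$ and $\mathrm{gr}\,U(\g)_\lambda$ is a quotient of $k[\g^*]/I(\mathcal{N})=k[\mathcal{N}]$. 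But the composite $k[\g^*]\twoheadrightarrow\mathrm{gr}\,U(\g)_\lambda\xrightarrow{\mathrm{gr}\,\bar\phi_\lambda}\mathrm{gr}\,\Gamma=k[\mathcal{N}]$ is $\mu^*=\pi^*$, whose kernel is exactly $I(\mathcal{N})$; comparing kernels forces $\mathrm{gr}(\mathfrak{m}_\lambda U(\g))=I(\mathcal{N})$, so $\mathrm{gr}\,U(\g)_\lambda=k[\mathcal{N}]$ and $\mathrm{gr}\,\bar\phi_\lambda=\mathrm{id}$. Therefore $\bar\phi_\lambda$ is an isomorphism.

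\textbf{Main obstacle.} The formal skeleton — filtered rings and the five lemma — is routine; the genuine content is the pair of global inputs used above, namely the vanishing $H^{>0}(T^*(G/B),\mathcal{O})=0$ together with $\Gamma(T^*(G/B),\mathcal{O})=k[\mathcal{N}]$ (normality of $\mathcal{N}$ and rationality of the Springer resolution) and Kostant's description together with reducedness of $k[\mathcal{N}]$. This is precisely where the hypothesis $p\gg 0$ is used, via reduction modulo $p$ from characteristic zero. An alternative, arguably cleaner, route is to prove the identity in characteristic zero (classical Beilinson--Bernstein) over a suitable ring of $S$-integers and then base-change to $k$, using that for $p\gg 0$ the formation of $\Gamma$ of the relevant coherent sheaves commutes with base change --- essentially the argument of \cite{BMR}.
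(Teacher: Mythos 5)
The paper does not actually prove this statement: it is recorded in the Background section as a known fact with a citation to \cite{BMR}, so there is no ``paper proof'' to compare against. Your sketch is essentially the standard argument that appears in that reference and in the classical Beilinson--Bernstein setting, transported to characteristic $p\gg 0$: build $\phi_\lambda$ from the $G$-action, pass to associated graded, identify $\mathrm{gr}\,\phi_\lambda$ with the moment-map pullback $\mu^*\colon k[\g^*]\to k[\mathcal{N}]$, and use the rationality/normality of $\mathcal{N}$ together with Kostant's description of $I(\mathcal{N})$ to force $\mathrm{gr}\,\bar\phi_\lambda$ to be the identity. The reduction of the cohomology vanishing to $R\pi_*\mathcal{O}=\mathcal{O}_{\mathcal{N}}$ and affineness of $\mathcal{N}$, and the kernel bookkeeping with $\mathrm{gr}(\mathfrak m_\lambda U(\g))$, are both correct.

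One step is stated with a small jump. From $\mathrm{gr}_n\phi_\lambda(z)=0$ for $z\in\xi_{HC}$ of filtration degree $n$, you conclude directly that $\phi_\lambda(z)$ has order $\le 0$; what the computation literally gives is only that $\phi_\lambda(z)$ has order $<n$. To reach order $0$ you need to iterate, and to do that you must see that the lower-order symbol is again killed. The clean way to close this is to use $G$-equivariance: $z$ is $G$-invariant, $\phi_\lambda$ is $G$-equivariant, so $\phi_\lambda(z)\in\Gamma(\mathcal{D}_{G/B,\lambda})^G$; since $\mathrm{gr}\,\Gamma(\mathcal{D}_{G/B,\lambda})^G$ embeds into $k[\mathcal{N}]^G=k$ (as $\mathcal{N}$ has a dense $G$-orbit), all positive-degree symbols of $\phi_\lambda(z)$ vanish and $\phi_\lambda(z)$ is a scalar. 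With that observation inserted, the argument is sound, and your alternative route (establish the isomorphism over a ring of $S$-integers in characteristic zero and base-change, using compatibility of $\Gamma$ with base change for $p\gg 0$) is indeed closer in spirit to how \cite{BMR} handles the large-$p$ hypotheses.
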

								
						Recall that the notation $D^b(\mathcal{D}_{\lambda}-mod)$ stands for the category of $\lambda$ twisted D modules. 
						
						\begin{thm}
						Let $\lambda\in \Lambda$ be p-regular, then derived global sections functor $\Gamma:D^b(\mathcal{D}_{G/B\lambda}-mod)\rightarrow D^b(U_{\lambda}-mod)$ is an equivalence of categories. \cite{BMR}
						\end{thm}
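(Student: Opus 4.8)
The plan is to realize $\Gamma$ as one half of an adjoint pair of triangulated functors and show that both adjunction maps are isomorphisms; the two directions require genuinely different inputs, and only one of them is serious. First I would set up the adjunction: let $\mathcal{L}oc(M):=\mathcal{D}_{G/B,\lambda}\otimes_{U(\g)_\lambda}M$ and let $L\mathcal{L}oc:=\mathcal{D}_{G/B,\lambda}\otimes^{L}_{U(\g)_\lambda}(-)$ be its derived functor; using $\Gamma(\mathcal{D}_{G/B,\lambda})=U(\g)_\lambda$ (stated above) together with $R\mathcal{H}om_{\mathcal{D}_\lambda}(\mathcal{D}_\lambda,\mathcal{F})=\mathcal{F}$, one gets a canonical adjunction $(L\mathcal{L}oc,R\Gamma)$. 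Next I would check that both functors preserve the bounded coherent derived categories. For $R\Gamma$: via the isomorphism of the center of $\mathcal{D}_\lambda$ with $\mathcal{O}_{(T^*G/B)^{(1)}}$ one views $\mathcal{D}_\lambda$-modules as coherent sheaves on $(T^*G/B)^{(1)}$; since the Springer map $(T^*G/B)^{(1)}\to\mathcal{N}^{(1)}$ is proper and $\mathcal{N}^{(1)}$ is affine, $R\Gamma$ is computed by a proper pushforward and has cohomological amplitude at most $\dim\mathcal{B}$. For $L\mathcal{L}oc$: it is bounded because $\mathcal{D}_\lambda$ has finite Tor-dimension as a sheaf of right $U(\g)_\lambda$-modules, equivalently because $U(\g)_\lambda$ has finite homological dimension.

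For the first composition I would show the unit $\mathrm{id}\to R\Gamma\circ L\mathcal{L}oc$ is an isomorphism. Since $U(\g)_\lambda$ has finite homological dimension it generates $D^b(U(\g)_\lambda\text{-mod})$ as a thick triangulated subcategory, so it is enough to evaluate the unit on $U(\g)_\lambda$ itself, where it becomes the canonical map $U(\g)_\lambda\to R\Gamma(\mathcal{D}_{G/B,\lambda})$. To see this is an isomorphism I would filter $\mathcal{D}_\lambda$ by order of differential operator: $\mathrm{gr}\,\mathcal{D}_\lambda$ is, as a sheaf on $G/B$, the pushforward of $\mathcal{O}_{T^*G/B}$ along $T^*G/B\to G/B$, so each $H^i(G/B,\mathrm{gr}^n\mathcal{D}_\lambda)$ is a direct summand of $H^i(T^*G/B,\mathcal{O})=H^i(\mathcal{N},R\mu_*\mathcal{O}_{T^*G/B})=H^i(\mathcal{N},\mathcal{O}_\mathcal{N})$, using that the Springer map $\mu$ has $R\mu_*\mathcal{O}=\mathcal{O}_\mathcal{N}$ (rational singularities of the nilpotent cone, valid for $p\gg 0$ by reduction to characteristic $0$) and that $\mathcal{N}$ is affine; hence this vanishes in positive degrees. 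Dévissage along the exhaustive filtration by coherent subsheaves then gives $H^{>0}(G/B,\mathcal{D}_\lambda)=0$ and $H^0=U(\g)_\lambda$. In particular $L\mathcal{L}oc$ is fully faithful.

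It remains to prove that $L\mathcal{L}oc$ is essentially surjective, equivalently (given full faithfulness) that $R\Gamma$ is conservative; equivalently again, since $R\Gamma(\mathcal{F})=R\mathrm{Hom}_{\mathcal{D}_\lambda}(\mathcal{D}_\lambda,\mathcal{F})$, that $\mathcal{D}_\lambda$ generates $D^b(\mathcal{D}_{G/B,\lambda}\text{-mod})$. This is where $p$-regularity of $\lambda$ and the Azumaya structure enter. I would decompose over the Frobenius center $\xi_{Fr}=\mathcal{O}_{\mathcal{N}^{(1)}}$ and reduce to checking conservativity after completing at each closed point $e\in\mathcal{N}^{(1)}$, i.e. on the formal neighbourhood of the Springer fibre $\mathcal{B}_e$ in $T^*G/B$. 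On such a formal neighbourhood $\mathcal{D}_\lambda$ is a split Azumaya algebra (the splitting near Springer fibres, quoted above), with splitting bundle $\mathcal{E}$, so the category identifies with coherent sheaves on the formal neighbourhood and $R\Gamma$ becomes the proper derived pushforward of the restricted Springer map, landing in modules over the complete local ring $\widehat{\mathcal{O}}_{\mathcal{N}^{(1)},e}$ — matching the completion of $U(\g)_\lambda$ by the direction already proved. The computation $R\mathrm{Hom}(\mathcal{E},\mathcal{E})=R\Gamma(\mathcal{E}nd\,\mathcal{E})=R\Gamma(\mathcal{D}_\lambda)$, concentrated in degree $0$ by the previous paragraph, shows $\mathcal{E}$ is a tilting object; what is left is that $\mathcal{E}$ \emph{generates}. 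I would prove this by induction on $\dim\mathcal{B}_e$ using the stratification of $\mathcal{N}$ into nilpotent orbits and the Slodowy transverse slices: over the open regular orbit the Springer map is an isomorphism of smooth formal schemes and the statement is trivial; a transverse slice to a smaller orbit reduces generation to the analogous statement for the Springer map of a Levi subgroup, handled by induction, and one glues along the stratification using compatibility of the pushforward with restriction to strata.

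The serious obstacle is exactly this last step. Ext-vanishing — the tilting property of $\mathcal{E}$ — does not by itself force generation, and indeed a generic coherent sheaf supported inside a Springer fibre can be annihilated by the Springer pushforward, so one must genuinely use that the sheaf underlies a $\mathcal{D}_\lambda$-module, i.e. that its support is constrained by the Azumaya/splitting geometry, together with the symplectic-resolution structure of $T^*G/B\to\mathcal{N}$. The accompanying technical burden is the completion bookkeeping: that $R\Gamma$ commutes with completion along $\mathcal{N}^{(1)}$, that the Azumaya splitting and its splitting bundle behave well across the stratification, and that nothing is lost in passing between the global statement and the fibrewise one. Finally, $p$-regularity of $\lambda$ is used essentially already in the easy direction — it is what makes $R\Gamma(\mathcal{D}_\lambda)$ have no higher cohomology and equal $U(\g)_\lambda$ on the nose (for non-regular $\lambda$ both the vanishing and the identification of global sections fail) — and therefore propagates into the hard direction as well.
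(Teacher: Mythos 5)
The paper offers no proof of this statement; it is quoted from \cite{BMR} as background. Your sketch is therefore being measured against the BMR argument itself, and the overall skeleton you lay out (set up the $(L\mathcal{L}oc,R\Gamma)$ adjunction, prove the unit is an isomorphism by computing $R\Gamma(\mathcal{D}_\lambda)$ via the order filtration and rational singularities of $\mathcal{N}$, then reduce conservativity to formal neighbourhoods of Springer fibres using the Frobenius centre and the Azumaya splitting) does match the shape of the actual proof. However, your final remark misattributes where $p$-regularity enters: the identification $R\Gamma(\mathcal{D}_\lambda)\simeq U(\g)_\lambda$, concentrated in degree zero, holds for \emph{every} integral $\lambda$ once $p$ is large (it is exactly your graded/rational-singularities argument, which never invokes regularity); BMR prove it as a standalone fact before the main theorem. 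Regularity is used only in the hard direction, to ensure $R\Gamma$ is conservative — for singular $\lambda$ the global sections are still $U_\lambda$, but the localization functor fails to be essentially surjective (this is what \cite{BMR2} repairs with singular localization and partial flag varieties). Relatedly, invoking finite homological dimension of $U(\g)_\lambda$ to reduce the unit computation to the free module is circular in this context: that finiteness is standardly obtained \emph{as a corollary} of the localization equivalence. It can be avoided by using that $L\mathcal{L}oc$ and $R\Gamma$ have uniformly bounded cohomological amplitude and then running dévissage from $D^-$.

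The serious gap is the one you flag yourself: you do not actually prove that the splitting bundle $\mathcal{E}$ generates $D^b$ of the formal neighbourhood, and the tilting property alone is insufficient. Your proposed repair — induction over the orbit stratification of $\mathcal{N}$ via Slodowy slices, reducing to Levi subgroups — is not how BMR close this, and it has its own unaddressed difficulties (the transverse slice does not simply hand you the Springer map of a Levi; you would need compatibility of the Azumaya splitting and of $R\Gamma$ with the slice, and a gluing statement along the open stratification, none of which is routine). BMR's actual route goes through the Grothendieck alteration $\tilde{\g}\to\g$, which is small rather than merely semi-small, together with translation and intertwining functors between different regular $\lambda$; generation and conservativity are deduced from the behaviour over the regular semisimple locus and transported. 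Without filling in one of these arguments, what you have is a correct reduction to the key claim, not a proof of it.
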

						
						\begin{thm}
						Let $\lambda,e$ be a compatible pair. The derived localization induces an equivalences on the subcategories $D^b(\mathcal{D}_{\lambda}-mod_e)\simeq D^b(u_{\lambda}-mod_e)$
						\end{thm}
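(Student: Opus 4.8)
The plan is to refine the (derived) global sections equivalence $\Gamma\colon D^b(\mathcal{D}_{G/B,\lambda}-mod)\to D^b(U_\lambda-mod)$ of the previous theorem — for which $\lambda$ must be $p$-regular — by keeping track of the action of the Frobenius center $\xi_{Fr}\cong O_{\g^{*(1)}}$. Once the integral $\lambda$ is fixed, the image $Z$ of $\xi_{Fr}$ in $U_\lambda$ is the coordinate ring of (the Frobenius twist of) the nilpotent cone $\mathcal{N}^{(1)}$, and for integral $\lambda$ the compatibility of the pair $(\lambda,e)$ is exactly the condition $e\in\mathcal{N}^{(1)}$; so $Z$ acts on every object of $D^b(U_\lambda-mod)$ through endomorphisms of the identity functor. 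On the other side, every $\mathcal{D}_\lambda$-module $\mathcal{F}$ underlies, via the forgetful functor $F$, a coherent sheaf $F(\mathcal{F})$ on $(T^*G/B)^{(1)}$ — because the center of $\mathcal{D}_{G/B}$ is $O_{(T^*G/B)^{(1)}}$ — and pulling back functions along the Springer map $\pi\colon (T^*G/B)^{(1)}\to\mathcal{N}^{(1)}$ makes every object of $D^b(\mathcal{D}_\lambda-mod)$ a $Z$-object as well. The first step is to verify that $\Gamma$ is $Z$-linear: the algebra isomorphism $\Gamma(\mathcal{D}_{G/B,\lambda})\cong U_\lambda$ identifies the two copies of $Z$, so that $\Gamma$ and its quasi-inverse $M\mapsto\mathcal{D}_\lambda\otimes^{L}_{U_\lambda}M$ are both $Z$-linear.

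Granting this, I would reinterpret each of the two subcategories in the statement as a "support at $e$" condition for $Z$. By definition $D^b(U_\lambda-mod_e)$ consists of the complexes all of whose cohomology modules are annihilated by a power of the maximal ideal $\mathfrak{m}_e\subset Z$, i.e. are set-theoretically supported at $e$. On the D-module side, for $\mathcal{F}\in\mathcal{D}_\lambda-mod$ the $Z$-module structure above is precisely the pushforward along $\pi$ of the $O_{(T^*G/B)^{(1)}}$-module structure on $F(\mathcal{F})$; hence a power of $\mathfrak{m}_e$ annihilates $\mathcal{F}$ if and only if $\pi$ maps $\mathrm{Supp}\,F(\mathcal{F})$ into $\{e\}$, if and only if $F(\mathcal{F})$ is supported on the formal neighborhood of $\pi^{-1}(e)$. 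The last equivalence is the standard fact that a coherent sheaf whose set-theoretic support lies in a closed subscheme is, by Noetherianity, a module over some nilpotent thickening of that subscheme, so "supported on the formal neighborhood of $\pi^{-1}(e)$" and "set-theoretically supported in $\pi^{-1}(e)$" describe the same objects. Thus $D^b(\mathcal{D}_\lambda-mod_e)$ is likewise the full subcategory of complexes on which $\mathfrak{m}_e^N$ acts by zero for $N\gg 0$.

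The conclusion is then formal: a $Z$-linear triangulated equivalence preserves the property "$\mathfrak{m}_e^N$ acts by zero" (for fixed $N$), since $\Gamma(\mathfrak{m}_e^N\cdot\mathrm{id}_M)=\mathfrak{m}_e^N\cdot\mathrm{id}_{\Gamma(M)}$ as morphisms, and symmetrically for the quasi-inverse. Hence $\Gamma$ restricts to mutually inverse equivalences $D^b(\mathcal{D}_\lambda-mod_e)\simeq D^b(U_\lambda-mod_e)$.

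The only real work lies in the first step — the $Z$-linearity of $\Gamma$, which unwinds to the compatibility of the moment map $\pi$, the filtration isomorphism $gr\,\mathcal{D}_{G/B}\cong O_{T^*(G/B)}$, and the identification $\Gamma(\mathcal{D}_{G/B,\lambda})\cong U_\lambda$; this is part of the content of \cite{BMR}, \cite{BMR2}. Everything else — the rephrasing of the support condition and the transport of a subcategory through an equivalence — is formal. Alternatively one can cite \cite{BMR2} directly, where the equivalence is already organized as an equivalence of sheaves of categories over $\g^{*(1)}$ (equivalently over $\mathcal{N}^{(1)}$), the present statement being its restriction to the formal neighborhood of $e$.
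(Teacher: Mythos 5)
The paper does not prove this statement; it is listed as background, with the preceding theorem credited to \cite{BMR} and this one implicitly to \cite{BMR2}, so there is no internal proof to benchmark against. Your reconstruction follows the route taken in those references --- $Z$-linearity of the localization equivalence over the Frobenius center, a common reinterpretation of both subcategories as $\mathfrak{m}_e$-torsion conditions via the moment map, and formal transport --- and this is the right approach.

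One step in the transport is glossed over and should be supplied. The subcategories $D^b(U_\lambda-mod_e)$ and $D^b(\mathcal{D}_\lambda-mod_e)$ are defined by the condition that each \emph{cohomology module} is killed by a power of $\mathfrak{m}_e$, whereas what your $Z$-linearity identity $\Gamma(z\cdot\mathrm{id}_M)=z\cdot\mathrm{id}_{\Gamma(M)}$ preserves directly is the a priori stronger condition that $\mathfrak{m}_e^N\cdot\mathrm{id}_M$ vanish as a morphism in $D^b$; the cohomology condition is not preserved by $\Gamma$ on the nose, since $\Gamma$ is a derived functor and not $t$-exact. The two conditions do coincide for bounded complexes: an endomorphism inducing zero on every cohomology is strictly compatible with the truncation filtration, hence nilpotent of order bounded by the cohomological amplitude, so if $\mathfrak{m}_e^{N_0}$ kills each $H^i(M)$ then $\mathfrak{m}_e^{N_0 k}\cdot\mathrm{id}_M=0$ for $k$ that amplitude. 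But this lemma is exactly what makes your transport step legal, and you use it implicitly rather than stating it. You should also make explicit the standing hypothesis that $\lambda$ is $p$-regular, which the printed theorem omits but which your proof needs in order to invoke the ambient equivalence from the previous theorem.
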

				
						Note that combining the last theorem with the equivlanece in equation (\ref{D-coh}), the following equivalence is obtained: $D^b(Coh_0(T^*G/B))\simeq D^b(\mathcal{D}(G/B)_{\lambda}-mod_0)\simeq D^b(u(\g)_{\lambda}^{\hat{0}}-mod)$.

			\begin{thm}
			
			Similarly, Let $X:=G/P$, P a parabolic. Let $A_{\lambda}:=\Gamma(G/P,\mathcal{D}_{G/P,\lambda})$. Let $\lambda\in \Lambda_L$ be p regular. then the derived global sections functor $\Gamma:D^b(\mathcal{D}_{G/P,\lambda}-mod)\rightarrow D^b(A_{\lambda}-mod)$ is an equivalence of categories, \cite{BMR2} , and  $D^b(Coh_0(T^*G/P))\simeq D^b(\mathcal{D}(G/P)_{\lambda}-mod_0)\simeq D^b(A_{\lambda}^{\hat{0}}-mod)$. 
			\end{thm}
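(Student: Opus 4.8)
The plan is to reduce the whole statement to the first equivalence, namely that $\Gamma$ induces an equivalence $D^b(\mathcal{D}_{G/P,\lambda}-mod)\simeq D^b(A_\lambda-mod)$ for $p$-regular $\lambda\in\Lambda_L$, and then to obtain that by transporting the Borel-case argument of \cite{BMR} to the partial flag variety. The other two equivalences are essentially formal consequences: $D^b(Coh_0(T^*G/P))\simeq D^b(\mathcal{D}_{G/P,\lambda}-mod_0)$ is the case $X=G/P$ of the Azumaya splitting over the formal neighbourhood of the zero section, already recorded (the equivalence \eqref{D-coh}), and the identification with $D^b(A_\lambda^{\hat 0}-mod)$ comes from restricting the global-sections equivalence to the subcategory cut out by the support condition, once one checks that the two support conditions correspond. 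For the latter, under the isomorphism $Z(\mathcal{D}_{G/P,\lambda})\cong\mathcal{O}_{(T^*G/P)^{(1)}}$ together with the surjection $U(\g)^{\hat 0}_\lambda\twoheadrightarrow A^{\hat 0}_\lambda$, the Frobenius ($p$-)center of $A_\lambda$ acts on a localized $\mathcal{D}_{G/P,\lambda}$-module through the parabolic Springer map $\pi^{(1)}$; hence an $A_\lambda$-module has generalized $p$-character $0$ if and only if the corresponding D-module is supported on the formal neighbourhood of $(\pi^{(1)})^{-1}(0)$, and $(\pi^{(1)})^{-1}(0)$ is precisely the zero section $G/P\subset T^*G/P$, the fibre of the cotangent moment map over $0$.

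To begin the first equivalence I would compute $R\Gamma(\mathcal{D}_{G/P,\lambda})=A_\lambda$, concentrated in degree $0$. Using the order filtration one has $gr\,\mathcal{D}_{G/P,\lambda}\cong\pi_*\mathcal{O}_{T^*G/P}$ as a sheaf on $G/P$, so $H^i(G/P,gr\,\mathcal{D}_{G/P,\lambda})=H^i(T^*G/P,\mathcal{O})$, which vanishes for $i>0$ because $T^*G/P$ is a symplectic resolution of (a finite cover of) the affine variety $\mathcal{N}_L$, so that $R^{>0}\pi_*\mathcal{O}=0$; this invokes the rationality of symplectic singularities, imported from characteristic $0$ since $p\gg 0$. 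As sheaf cohomology on the Noetherian space $G/P$ commutes with the filtered colimit $\mathcal{D}_{G/P,\lambda}=\varinjlim\mathcal{D}^{\le n}_{G/P,\lambda}$, this yields $H^{>0}(G/P,\mathcal{D}_{G/P,\lambda})=0$ and $H^0=A_\lambda$.

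Next I would run the \cite{BMR} machine. First, $R\Gamma$ has finite cohomological dimension: it factors through the proper pushforward along $\pi^{(1)}$ to the affine scheme $\mathcal{N}_L^{(1)}$ (or its finite cover), with amplitude at most $\dim G/P$. Hence $R\Gamma$ preserves bounded derived categories and admits the left adjoint $\mathcal{L}oc_\lambda=\mathcal{D}_{G/P,\lambda}\otimes^L_{A_\lambda}(-)$, and the claim is that the counit $\mathcal{L}oc_\lambda\circ R\Gamma\to\mathrm{id}$ and the unit $\mathrm{id}\to R\Gamma\circ\mathcal{L}oc_\lambda$ are isomorphisms. For the counit it is enough, by the finiteness just noted, to test on a generating family of $D^b(\mathcal{D}_{G/P,\lambda}-mod)$; and since $\mathcal{D}_{G/P,\lambda}$ is Azumaya over $(T^*G/P)^{(1)}$, itself the total space of a vector bundle over $(G/P)^{(1)}$, such a family is furnished by the induced modules $\mathcal{D}_{G/P,\lambda}\otimes_{\mathcal{O}_{G/P}}\mathcal{O}(\mu)$ with $\mu$ ranging over a sufficiently dominant cone in $\Lambda_L$ (equivalently, by twists of the Azumaya splitting bundle). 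On these the counit unwinds into the statement that $R\Gamma(\mathcal{D}_{G/P,\lambda}\otimes\mathcal{O}(\mu))$ is concentrated in degree $0$ with the expected $A_\lambda$-module structure; the degree-$0$ concentration holds for $\mu$ dominant enough because $\mathcal{O}(\mu)$ pulled back to $T^*G/P$ is ample relative to $\pi$, so relative Serre vanishing applies, while the identification of the module — i.e. the honest isomorphism of the counit map — is where $p$-regularity of $\lambda$ enters, exactly as in \cite{BMR} (through the surjectivity of a translation map). Finally, the unit is an isomorphism on $A_\lambda$ by the computation $R\Gamma(\mathcal{D}_{G/P,\lambda})=A_\lambda$ of the previous paragraph, and therefore in general; along the way one gets that $A_\lambda$ has finite homological dimension.

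The step I expect to be the main obstacle is the generation-plus-vanishing input for the counit, and in particular making sure that $p$-regularity of $\lambda$ as an element of $\Lambda$ — a condition on the full Weyl group $W$, not merely on $W_L$ — is exactly what drives the translation argument on $G/P$, together with the routine $p\gg 0$ reductions (for relative vanishing in characteristic $p$ and for the rationality of the singularities of $\mathcal{N}_L$) needed to cite the characteristic-$0$ statements. A shortcut that avoids part of this is to use the smooth projection $q\colon G/B\to G/P$: for $\lambda\in\Lambda_L$ one has $q^*\mathcal{O}_{G/P}(\lambda)=\mathcal{O}_{G/B}(\lambda)$, so $D^b(\mathcal{D}_{G/P,\lambda}-mod)$ is realised as the full subcategory of $D^b(\mathcal{D}_{G/B,\lambda}-mod)$ of objects pulled back from $G/P$, matched under Borel localization \cite{BMR} with the corresponding subcategory of $D^b(U(\g)_\lambda-mod)$; one would then deduce the parabolic statement from the Borel one. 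I would keep the direct argument as the main line and use the reduction to $G/B$ as a check.
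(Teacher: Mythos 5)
The paper offers no proof of this theorem: it is a background fact cited from \cite{BMR2}, so there is no in-paper argument to compare against. Your reconstruction (adjoint pair $\mathcal{L}oc \dashv R\Gamma$, unit/counit, generation by $\mathcal{O}(\mu)$-twists, $H^{>0}(T^*G/P,\mathcal{O})=0$ via rationality of symplectic singularities for $p\gg 0$) is a sensible transposition of the $G/B$ machinery of \cite{BMR}, and the two "formal" reductions you make at the start are the right ones. Two remarks, though. First, the regularity condition you flag as the main obstacle is slightly misdiagnosed: the condition that actually controls localization on $G/P$ is \emph{parabolic} $p$-regularity, namely $\langle\lambda+\rho,\alpha^\vee\rangle\not\equiv 0\bmod p$ only for coroots $\alpha^\vee$ with $\g_\alpha\not\subset Lie(L)$ — this is precisely how the paper later defines the walls of $V^0_{\R}$ in the section on the parameter space. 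Full $p$-regularity over all of $W$ is sufficient but not what "drives" the translation step, since the translations on $G/P$ move only within $\Lambda_L$ and never touch the $L$-roots. Second, your "shortcut" via $q\colon G/B\to G/P$ is in fact closer both to what \cite{BMR2} actually does (singular localization on partial flag varieties, bootstrapping from the Borel case) and to the way the present paper itself treats $G/P$ in its construction (Lemma 4, the Corollary, and Claim 6, where the pullback $\pi^*\colon D^b(A_\lambda\text{-mod})\to D^b(U(\g)_\lambda\text{-mod})$ is the key tool and is asserted to be fully faithful and conservative). So I would promote the shortcut to the main argument rather than keep it as a consistency check; the direct $G/P$ argument, while plausible, duplicates work and is more exposed to exactly the vanishing and generation issues you acknowledge.
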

			
			\subsection*{The moment map} \cite{Gi3}
			
					Let G be a reductive over algebraically closed field k. Let $\g:=Lie(G)$. Let Z/k be a variety with an action of G. The varieties $T^*Z$ and $\g^*$ are poisson varieties. The G action on Z, gives rise to a G equivariant morphism of Poisson varieties $T^*Z\rightarrow \g^*$. We compose with the natural isomorphism $\g^*\simeq \g$ from the killing form, and refer to $\mu:T^*Z\rightarrow \g$ as the moment map.
					
					In this paper, we consider the moment map for the case Z=G/P, where P is a parabolic subgroup. The map $\mu:T^*G/P\rightarrow \g$. 
					
					In the case P=B a Borel, the image of this map is the nilpotent cone $\mathcal{N}$, and the map is a symplectic resolution-the springer resolution. More generally Let L be the Levi subgroup of P. The image of this map is the closure of a nilpotent orbit, which we denote $\mathcal{N}_L\subset \g$. (The notation $\mathcal{N}_L$ is the emphasize that it only depends on the Levi.) The map $T^*G/P\rightarrow \mathcal{N}_L$ is not quite a symplectic resolution but rather generically a finite cover.

			\subsection*{Symplectic resolutions}
			
			A symplectic singularity Y, is a normal variety, with an algebraic symplectic 2 form, $\Omega$, on the smooth locus $Y^{sm}$ of Y, s.t for some smooth projective resolution of Y, $\pi:X\rightarrow Y$ the pullback of $\Omega$ from the smooth locus extends to a 2 form on all of X, possibly degenerate. 
			
		 Beaville showed that if $\Omega$ extends to one smooth resolution, then it extends to any other resolution.\cite{Beau}
			
			A symplectic resolution X of a symplectic singularity Y, is a resolution as above, where the 2 form on X is a symplectic form. 
			
			When the symplectic singularity Y is affine, Y must be the affinization of X. $Y:=X^{aff}$ $X^{aff}:=spec \Gamma(X,O_X)$. Hence in such case it's enough to specify X, without specifying Y. 
			\bs
			
			Observation: A symplectic manifold, is in particular a Poisson variety.
			
			Observation: The canonical bundle of a symplectic resolution is trivial $K_X\simeq O_{X}$. 
			\bs
			
			Standard examples of symplectic resolutions include: 
			\begin{enumerate}
			
			\item The springer resolution $\mu:T^*G/B\rightarrow \mathcal{N}$, or more generally the restriction of this resolution to a Slodowy slice.  
			
			\textbf{Slodowy slice:}\cite{Sl} Let $e\in \mathcal{N}$ be a nilpotent element.
			
			Slodowy slice, $\Sigma_e\subset \mathcal{N}$ is a subvariety of $\mathcal{N}$, s.t the restriction of the springer resolution $\mu$ to $\mu^{-1}(\Sigma_e)$ is another symplectic resolution. In other words, the preimage of $\Sigma_e$ under the springer resolution is a smooth manifold, and the restriction of the symplectic 2 form on $T^*G/B$ to $\mu^{-1}\Sigma_e$ is non degenerate.
			
			To define $\Sigma_e$, use Jacobson Morozov theorem to get $sl_2$ triple $e,h,f$ that includes e. And define $\Sigma_e$ to be the intersection of e plus the centralizer of f in $\g$, with the nilpotent cone. One place where the Slodowy slice plays a role is, using the symplectic resolution given by restricting the springer resolution to the slodowy slice, one generalizes the known Braid group action on the category $D^bCoh_{\mu^{-1}(0)}(T^*G/B)$ (coherent sheaves with support on formal neighhbhorhod of the zero section $G/B\subset T^*G/B$), to an action on the bounded derived category of coherent sheaves with support on $\mu^{-1}(e)$ for any $e\in{\mathcal{N}}$. (\cite{BM},\cite{BMR})
			
			\item Quiver varieties,\cite{Nak}. These varieties have important applications in representation theory. For example, in Nakajima's geometric construction of the representations and crystals of Kac Moody algebras. 
			
			\item Symplectic resolutions of quotient singularities. \cite{Gi1}
			A basic example is symplectic resolution of Kleinian singularities. Let $\Gamma \subset SL_2(\CC)$ be a finite subgroup. Consider the quotient $\CC^2/\Gamma$. This is a poisson variety, where the poisson structure is induced from the canonical symplectic structure on $\CC^2$. The canonical minimal resultion of $\CC^2/\Gamma$ is a symplectic resolution. (\cite{BK3}, \cite{Kal3},\cite{Kal4})
			
			More generally, we can consider any finite dimensional symplectic vector space, $(W,\omega)$ and a symplectic subgroup $G \subset Sp(W,\omega)$. Consider the Poisson variety $W/G$. where the Poisson structure is induced from the symplectic form on W. Symplectic resolutions of such varieties (when exists), have been studied.

			\textbf{Remark: Slodowy slice and Kleinian singularities}
			McKay correspondence gives a correspondence between finite subgroups as above $\Gamma\subset SL_2(\CC)$, and types A,D,E Dynkin graphs. 
			Recall - Let $\Gamma\subset SL_2(\CC)$ be a finite subgroup. Let
			I:=Irreps($\Gamma$) parametrize the irreducible representations $L_i$, $i\in I$ of $\Gamma$. the corresponding Dynkin diagram is defined by letting $e_{ij}:=dimHom_{\Gamma}(V_i,V_j\otimes \CC^2)$, where $\CC^2$ is the standard $SL_2$ representation, restricted to $\Gamma$. (Observe that $e_{ij}=e_{ji}$)
			
			Given $\Gamma$, take $\g$ to be of the corresponding type. Let $e\in \g$ be a subregular nilpotent. Then there is an equivalence of Poisson varieties between the quotient singularity and the Slodowy slice:$\CC^2/\Gamma\simeq \Sigma_e$.
		
			\bs
			
			Another quotient singularity that has been studied is the following. Let $\g$ be a finite dimensional semi simple lie algebra. Assume $\g$ is of type A, B, or C. Let $\mathfrak{h}$ be its cartan. When taking the vector space W, to be the cotangent bundle of the cartan, the weyl group $\mathbb{W}\subset GL(\mathfrak{h})$, is embedded in the symplectic group $Sp(W)$. In this setup, the quotient $W/\mathbb{W}$ has a symplectic resolution. (In fact the condition that $\g$ was of type A,B,or C is also necessary). More generally one could take any coxeter group rather than $\mathbb{W}$. 
		
			Moreover, specializing the above setup to type A, a symplectic resolution of the quotient space is well known. This is the Hilbert Chow morphism. $\pi:Hilb^n(\CC^2)\rightarrow T^*\CC^n/S_n$. 
			
			\end{enumerate}

			\subsection*{Quantizations of symplectic resolutions}
			
			Examples for filtered quantizations of a structure sheaf of a (poisson) variety, include: $U(\g)$ as the filtered quantization for $Sym(\g)$, symplectic reflection algebras as quantizations of quotient singularities $V/\Gamma$ for $(V,w)$ symplectic finite dimensional vector space and $\Gamma\subset Sp(V)$ finite subgroup, and the sheaf of differential operators $\mathcal{D}(X)$ for a cotangent space $T^*X$. 
			\bs
			
			The problem of universal quantization of symplectic resolutions has been studied in \cite{KV}\cite{BK1}\cite{BK2}.
			
			Given a symplectic resolution $X\rightarrow Y$, The universal parameter space for quantizations of X is $H^2(X,\R)$ \cite{BK1}\cite{BK2}\cite{KV}. The starting point of the proof is the fact that formal locally there exists only one quantization. 
			
			\bs
			
			For every $\lambda\in H^2(X,\R)$ let $O(X)^{\lambda}$ be the corresponding quantization of the structure sheaf. Observe - For any symplectic resolution $Pic(X)\otimes \R\simeq H^2(X,\R)$. When $X=T^*G/P$ these quantizations for $\lambda\in Pic(X)$ come from the sheaf of $\lambda$ twisted differential operators on G/P. (Recall that $Pic(T^*G/P) \simeq Pic(G/P)$)
			\bs
			
			For different symplectic resolutions $X^{(i)},X^{(j)}$ of the same symplectic singularity, the birational isomorphism between $X^{(i)},X^{(j)}$  induces an isomorphism on the Picard groups $Pic:=Pic(X^{(i)})\simeq Pic(X^{(j)})$. Hence it makes sense to compare the quantizations $O_{X^{(i)}}^{\lambda}$ and $O_{X^{(j)}}^{\lambda}$ parametrized by the same $\lambda\in Pic$

		\subsection*{Derived Localization in characteristic p}

		Fix a symplectic resolution $X\rightarrow Y$. Let $V_{\R}:=H^2(X,\R)\simeq Pic(X)\otimes \R$. One can ask when does derived localization hold, when is the global sections functor from sheaves over $O^{\lambda}(X)$, to modules over the global sections $\Gamma:D^b(O^{\lambda}(X)-mod)\rightarrow D^b(\Gamma(O_{\lambda}(X))-mod)$ an equivalence. e.g - Let $X=T^*G/B$, then derived localization holds when $\lambda$ is regular. This fact is a derived characteristic p version of Beilinson Berenstein localization theorem\cite{BeiBer}.
		
		More generally, for any symplectic resolution, it's a conjecture that derived localization holds away from a discrete set of hyperplanes $H_i$. Let me call these hyperplanes the 'walls' in $V_{\R}$.

		Let $V_{\R}^0\subset V_{\R}$ be the complement of these walls.	
			
			\subsection*{D equivalence of symplectic resolutions}
			
			\begin{defi}
			two smooth projective varieties X,X' over algebraically closed field, which are birationally equivalent, are called K equivalent, if there is a birational correspondence $X\leftarrow Z \rightarrow X'$, for Z smooth projective variety, such that the pullbacks to Z of the canonical divisors are linear equivalent. 
			\end{defi}

			Kawamata conjectures that K equivalence implies an equivalence of the bounded derived categories. 
			
			One case in which this holds is for two symplectic resolutions. K equivalence holds, since the canonical bundles of symplectic resolutions are trivial. It's a proof of Kaledin that there is also an equivalence of the derived categories.
			\begin{thm}
					Given X,X' symplectic resolutions of a fixed symplectic singularity Y, there is an equivalence $D^b(X)\simeq D^b(X')$.
			\end{thm}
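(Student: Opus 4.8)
The plan is to carry out Kaledin's quantization‑in‑characteristic‑$p$ argument, whose idea is to turn both geometric categories into module categories over one and the same noncommutative algebra built from $Y$, and then to lift the resulting equivalence to characteristic $0$. First I would reduce to the case that $Y$ is affine, so that $Y = X^{aff} = \operatorname{Spec}\Gamma(X,\mathcal{O}_X) = \operatorname{Spec}\Gamma(X',\mathcal{O}_{X'}) = (X')^{aff}$; the general case follows by passing to formal or étale neighborhoods of points of $Y$, over which $X$ and $X'$ restrict to resolutions of a common local model. Then I would spread $X$, $X'$, $Y$ and the birational identification out over a finitely generated subring $R\subset\CC$, reducing the problem to producing, for a dense set of primes $p$, an equivalence over $\overline{\mathbb{F}_p}$ implemented by a tilting generator; the passage back to characteristic $0$ is then the standard deformation argument, since vanishing of higher self‑extensions of a tilting object is an open condition, so a tilting generator on $X_{\overline{\mathbb{F}_p}}$ for $p\gg 0$ deforms over $R$, and an equivalence of $R$‑linear categories specializes.

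Over $\overline{\mathbb{F}_p}$ I would use quantizations. The universal parameter space of quantizations of $X$ is controlled by $\operatorname{Pic}(X)$ (via $H^2(X)$) \cite{BK1}\cite{BK2}\cite{KV}; write $\mathcal{O}_X^{\lambda}$ for the quantization of period $\lambda$. For $X = T^*G/P$ these are the $\lambda$‑twisted crystalline differential operators recalled above, and in general they are Frobenius‑constant quantizations. Each $\mathcal{O}_X^{\lambda}$ is an Azumaya algebra over the Frobenius twist $X^{(1)}$ which splits on the formal neighborhood of the central fiber of $\pi\colon X\to Y$, so $Coh(X)\simeq \mathcal{O}_X^{\lambda}\text{-mod}$. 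For $\lambda$ off the walls, derived localization gives an equivalence $R\Gamma\colon D^b(Coh(X)) = D^b(\mathcal{O}_X^{\lambda}\text{-mod}) \xrightarrow{\ \sim\ } D^b(A_{\lambda}\text{-mod})$ with $A_{\lambda} := \Gamma(X,\mathcal{O}_X^{\lambda})$ module‑finite over $\mathcal{O}(Y) = \Gamma(X,\mathcal{O}_X)$; for $X=T^*G/P$ this is exactly the localization theorem quoted in the background, and for a general symplectic resolution it is the abundance of regular parameters (conjectural in the framework of this paper, but what Kaledin instead establishes directly is the existence of some tilting generator after reduction mod $p$), which I take as input. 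Applying the same construction to $X'$ gives $A'_{\lambda} := \Gamma(X',\mathcal{O}_{X'}^{\lambda})$ and $D^b(Coh(X'))\simeq D^b(A'_{\lambda}\text{-mod})$.

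The \emph{key step} is to identify $A_{\lambda}$ with $A'_{\lambda}$. The birational equivalence of $X$ and $X'$ induces $\operatorname{Pic}(X)\simeq\operatorname{Pic}(X')$, so a single class $\lambda$ makes sense on both resolutions. Now $A_{\lambda} = \Gamma(X,\mathcal{O}_X^{\lambda})$ is a filtered quantization of the affine Poisson variety $Y$: its associated graded is $\Gamma(X,\mathcal{O}_X) = \mathcal{O}(Y)$ (using $R^{>0}\pi_*\mathcal{O}_X = 0$), and over the smooth symplectic locus $Y^{\mathrm{sm}}$ it is the canonical deformation quantization of period $\lambda$. Since filtered quantizations of $Y$ are themselves classified by the same space $\operatorname{Pic}(X)\simeq\operatorname{Pic}(X')$, with the global‑sections functor intertwining the two period maps — equivalently, since $Y\setminus Y^{\mathrm{sm}}$ has codimension $\geq 2$ and both $X,X'$ are isomorphisms over $Y^{\mathrm{sm}}$, so $A_{\lambda}$ and $A'_{\lambda}$ are both recovered from the same quantization of $Y^{\mathrm{sm}}$ by a Hartogs/reflexivity argument — one gets $A_{\lambda}\simeq A'_{\lambda}$, depending only on $(Y,\lambda)$.

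To finish, choose an integral $\lambda$ avoiding the walls of both $X$ and $X'$ (possible because the walls form a locally finite union of rational hyperplanes, which cannot contain all integral points of $\operatorname{Pic}(X)\otimes\mathbb{Q}$), and compose the two localization equivalences with the identification $A_{\lambda}\simeq A'_{\lambda}$ to obtain $D^b(X) \xrightarrow{\ R\Gamma\ } D^b(A_{\lambda}\text{-mod}) = D^b(A'_{\lambda}\text{-mod}) \xleftarrow{\ R\Gamma\ } D^b(X')$ over $\overline{\mathbb{F}_p}$; the deformation argument of the first paragraph lifts it to characteristic $0$. I expect the main obstacle to be precisely the identification $A_{\lambda}\simeq A'_{\lambda}$: it requires controlling how quantizations restrict to open subsets and extend across a codimension $\geq 2$ locus, together with the compatibility of the period maps under $\operatorname{Pic}(X)\simeq\operatorname{Pic}(X')$. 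A secondary difficulty is establishing derived localization — the abundance of regular $\lambda$, or at least one tilting generator after reduction mod $p$ — for a general symplectic resolution; beyond the $T^*G/P$ case this is exactly the ingredient being assumed.
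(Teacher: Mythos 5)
Your proposal is a genuinely different route from the paper's sketch. The paper argues \emph{asymmetrically}: choose a tilting generator $\mathcal{E}$ for $X$, extend it across the codimension $\geq 2$ locus to a vector bundle $\mathcal{E}'$ on $X'$ with $H^{>0}(X',\mathcal{E}nd(\mathcal{E}'))=0$, identify $\mathrm{End}(\mathcal{E})\simeq \mathrm{End}(\mathcal{E}')$, and then prove that $R\mathrm{Hom}(\mathcal{E}',-)\colon D^b(X')\to D^b(\mathrm{End}(\mathcal{E}')^{op}\text{-mod})\simeq D^b(X)$ is an equivalence by a Calabi--Yau trick, precisely \emph{because} $\mathcal{E}'$ is not known a priori to be a tilting generator on $X'$. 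You instead argue \emph{symmetrically}: apply derived localization at a regular quantization parameter $\lambda$ on \emph{both} $X$ and $X'$, and identify the global-sections algebras $A_\lambda\simeq A'_\lambda$ by the same codimension-$\geq 2$ / Hartogs mechanism. The identification step is the exact analogue of the paper's $\mathrm{End}(\mathcal{E})\simeq\mathrm{End}(\mathcal{E}')$, and you correctly reduce mod $p$ and deform back, so the skeleton is sound. What the two routes buy: yours avoids the Calabi--Yau trick entirely and gives a cleaner ``both sides are modules over the same filtered quantization of $Y$'' picture, but at the cost of needing derived localization (abundance of regular $\lambda$) on a \emph{general} symplectic resolution $X'$ --- exactly the ingredient that, as you acknowledge and as the paper states explicitly, is only conjectural beyond $T^*G/P$. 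The paper's asymmetric argument sidesteps this: it needs a tilting generator only on $X$ (supplied by the preceding theorem), and the CY trick does the work on the $X'$ side that you are outsourcing to localization. So as a proof of the theorem in the generality stated, your route has a genuine gap at that one input; as a conceptual explanation of \emph{why} the theorem holds, it is the better picture, and indeed reflects how Kaledin actually produces the tilting generator (as a Frobenius splitting bundle of a quantization in characteristic $p$).
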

			
						\begin{defi}
							A tilting generator for a smooth variety X/k, is a locally free sheaf $\mathcal{F}$ on X, s.t 1. $Ext^i(\mathcal{F},\mathcal{F})=0$, for all $i>0$, 2. The functor $RHom(\mathcal{F},-):D^bCoh(X)\rightarrow D^b(A_{\mathcal{F}}-mod)$ is an equivalence of categories. $A_{\mathcal{F}}:=End(\mathcal{F})^{op}$
						   
						\end{defi}
						
			\begin{thm}
						A symplectic resolution has a tilting generator. 
			\end{thm}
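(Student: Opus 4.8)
The plan is to prove that a symplectic resolution $X \to Y$ carries a tilting generator by passing to characteristic $p$ and exploiting the splitting of the relevant quantization on a formal neighborhood. First I would reduce to the case where $Y = \operatorname{Spec}\Gamma(X,O_X)$ is affine (a symplectic resolution being, by the earlier observation, the resolution of its own affinization) and spread out: choose a model of $X \to Y$ over a finitely generated $\mathbb{Z}$-algebra and specialize to closed points, obtaining symplectic resolutions $X_p \to Y_p$ over fields $k$ of large characteristic $p$. Since being a tilting generator is an open/constructible condition and lifts along flat base change, it suffices to produce a tilting generator on $X_p$ for $p \gg 0$ and then lift it back to characteristic zero by a standard spreading-out argument; the existence in characteristic zero follows from existence for infinitely many $p$.

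In characteristic $p$ the strategy is the one underlying Kaledin's and Bezrukavnikov--Kaledin's work. Consider a quantization $O^{\lambda}(X_p)$ of the structure sheaf for a suitable $p$-regular parameter $\lambda$ (for $X = T^*G/P$ this is $\mathcal{D}_{G/P,\lambda}$, but the argument is meant to apply to an arbitrary symplectic resolution using the universal quantization over $H^2(X,\mathbb{R})$). The key geometric input is that such a quantization, viewed as a sheaf of algebras, is Azumaya over the Frobenius twist $X^{(1)}$ and splits on the formal neighborhood of the fiber over $0 \in Y^{(1)}$; more precisely one wants a splitting bundle $\mathcal{E}$ whose restriction to that formal neighborhood gives the equivalence $O^{\lambda}(X)\text{-mod}_0 \simeq \operatorname{Coh}_0(X)$ of Theorem~(\ref{D-coh}) and its analogues. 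The plan is then: (i) use derived localization (valid for $p$-regular $\lambda$, $p \gg 0$) to identify $D^b\big(O^{\lambda}(X)\text{-mod}_0\big)$ with $D^b\big(A_\lambda^{\hat 0}\text{-mod}\big)$, where $A_\lambda = \Gamma(X, O^{\lambda}(X))$; (ii) combine with the splitting-bundle equivalence to get $D^b(\operatorname{Coh}_0(X)) \simeq D^b(A_\lambda^{\hat 0}\text{-mod})$; (iii) transport the free module $A_\lambda$ through this equivalence to obtain an object $\mathcal{F} \in D^b(\operatorname{Coh}_0(X))$, and argue it is represented by an honest vector bundle on $X$ (not just on a formal neighborhood) by a vanishing/gluing argument, using that the quantization and its splitting are defined globally and that the affinization map $X \to Y$ has vanishing higher direct images of $O_X$.

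The tilting properties then follow formally: $\operatorname{End}(\mathcal{F})^{op} \cong A_\lambda$ since Frobenius-constant splitting bundles have endomorphism algebra the global sections of the Azumaya algebra (this uses that $A_\lambda$ is finite over its center $\Gamma(O_{X^{(1)}}) = O_{Y^{(1)}}$, i.e. that $X \to Y$ contracts nothing to infinity), and $\operatorname{Ext}^i(\mathcal{F},\mathcal{F}) = H^i(X, \mathcal{E}nd(\mathcal{F})) = 0$ for $i>0$ because $\mathcal{E}nd(\mathcal{F})$ is a sheaf of $O_{X^{(1)}}$-algebras pushing forward to a sheaf on the affine $Y^{(1)}$ with no higher cohomology. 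Finally $RHom(\mathcal{F},-)$ is an equivalence $D^bCoh(X) \to D^b(A_\lambda\text{-mod})$ because it is so after completing at every closed point of $Y$ — which is exactly the content of the formal-neighborhood splitting plus derived localization — and a conservative, colimit-preserving functor that is an equivalence on all formal fibers is an equivalence.

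The main obstacle I expect is step (iii) together with the global structure of the splitting bundle: a priori the Azumaya algebra only splits on formal neighborhoods of individual fibers, so one must argue that the locally defined splitting bundles glue to a genuine vector bundle on all of $X$ and that it is Frobenius-constant (i.e. descends appropriately under Frobenius), which is where the hypothesis $p \gg 0$ and the symplectic-resolution geometry (triviality of $K_X$, semismallness/contractibility of the affinization map, and the $\mathbb{R}$-parametrized family of quantizations) are genuinely needed; the passage back to characteristic zero is comparatively routine spreading-out but does require knowing the tilting property is preserved, which again rests on the cohomology vanishing being uniform in $p$.
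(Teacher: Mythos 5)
The paper itself does not prove this statement: the proof environment that follows it is explicitly labeled ``A sketch of the D equivalence proof'' and establishes the \emph{preceding} theorem (existence of an equivalence $D^b(X)\simeq D^b(X')$), taking the tilting generator as an input. The existence of a tilting generator is imported from Kaledin \cite{Kal1} and Bezrukavnikov--Kaledin \cite{BK1}, so there is no in-paper argument to compare your proposal against.

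Your sketch does reproduce the broad shape of Kaledin's actual argument, and you correctly identify the central difficulty: upgrading the formal-local Azumaya splitting of the quantization to a genuine vector bundle on all of $X$, which is exactly what the Frobenius-constant quantization formalism of \cite{BK1} supplies. Two steps, however, would not survive scrutiny as written. First, the $\operatorname{Ext}$-vanishing argument is incorrect: that $\mathcal{E}nd(\mathcal{F})$ is a sheaf on $X^{(1)}$ lying over the affine $Y^{(1)}$ does \emph{not} yield $H^{>0}(X,\mathcal{E}nd(\mathcal{F}))=0$, since $X\to Y$ is proper but not affine and may have nonvanishing higher direct images. The real input is $H^{>0}(X,O_X)=0$ (rationality of symplectic singularities, i.e.\ Grauert--Riemenschneider together with $K_X\simeq O_X$), fed through a degeneration-of-quantization argument to control $\mathcal{E}nd$ of the splitting bundle. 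Second, ``lifts along flat base change'' hides the actual mechanism for returning to characteristic zero: the char-$p$ bundle is lifted to mixed characteristic by a deformation-theoretic argument, using precisely the vanishing of $\operatorname{Ext}^1$ and $\operatorname{Ext}^2$ established in characteristic $p$ (rigidity and unobstructedness), and only then spread out to the generic fiber; the tilting property is not, a priori, an open condition in families in the way your reduction assumes.
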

				
			\begin{proof} 
				A sketch of the D equivalence proof - Let $\mathcal{E}$ be a tilting generator for X. (That's a choice). X and X' agree on an open subset whose complement is of codim $\geq 2$. $\mathcal{E}$ can be extended to a vector bundle $\mathcal{E}'$ on X', s.t $H^i(X',End(\mathcal{E}'))=0$ for $i>0$. (yet $\mathcal{E}'$ is not necessarily the tilting generator of X). $\mathcal{E},\mathcal{E}'$ agree on an open set whose complement is of codim $\geq 2$, and the algebras $R:=End(\mathcal{E})\simeq End(\mathcal{E}')=:R'$ are isomorphic. Hence can consider the functor $Hom(\mathcal{E'},-):D^b(X')\rightarrow D^b(R'^{op}-mod)\simeq D^b(X)$. That's the functor that is proved by Kaledin to be an equivalence. The proof uses a trick related to $D^bcoh(X)$ being a Calabi-Yau category. \cite{Gi2}

			\end{proof}			
					
			The caveat is that the equivalence constructed is highly non canonical, since a tilting generator isn't. The local system constructed below has a refinement that leads to a better understanding of family of natural D equivalences between this categories.
			
\subsection*{The classical action on Grothendick group of $D^b(Coh_0(T^*G/B))$}		
					For simplicity we work over the complex numbers in this section. 
					Let $\pi:T^*G/B\rightarrow \mathcal{N}$ be the springer resolution.	Let $e\in \mathcal{N}$, Let $\mathcal{B}_e\subset T^*G/B$ be the springer fiber over e. $\mathcal{B}_e=$ set of Borels $B\subset G$ s.t $e\in Lie(\mathfrak{n}_B)$. 
				
					There is a known action of (affine) braid group on $D^b(Coh_{e}(T^*G/B))$, (The subscript e stands for restricting the support to be on the formal neighborhood of $\pi^{-1}(e)$). One of the goals of the local system we build, with value the category $D^b(Coh_0(T^*G/P))$ (P parabolic), is to generalize this action.
					
					The action on $D^b(Coh_eT^*(G/B))$ is a categorification of an extension of a known action of the weyl group on the cohomology of the springer fiber $\mathcal{B}_e$. That is, at the level of Grothendieck group, $K^0D^b(Coh_{e}T^*G/B)$, there is an equivalence $K^0D^b(Coh_{e}T^*G/B)\simeq H_*(\mathcal{B}_e)$. The action induced on $H_*(\mathcal{B}_e)$ is dual to the classical action on the cohomology of springer fiber by the Weyl group. We briefly recall that action.

						\subsubsection*{Properties of the springer fiber} 
		
		Let $e\in \mathcal{N}$, $\mathcal{B}_e$= set of borels $B\subset G$ s.t $e\in Lie( \mathfrak{n}_B)$. e.g Let $G=SL_n=SL(V)$, V/k n dimensional vector space. Identifing Borel subgroups with complete flags in V, $\mathcal{B}_e$ is flags that e preserves. ($0\subset V_1\subset..\subset V_{n-1}\subset V, e.V_i\subset V_{i}$)
		\bs
				
		Properties of $\mathcal{B}_e$: $dim \mathcal{B}_e=1/2(dim \mathcal{B}-dim O_{e})$ where $O_e$ is the adjoint orbit of e in $\mathcal{N}$. (note that the codimension of nilpotent orbit is always even, so the formula makes sense). In addition $\mathcal{B}_e$ is always connected, in interesting cases irreducible, and often singular. 
		\bs
		
		Extreme cases for springer fibers are $e=0$, for which $\mathcal{B}_e=\mathcal{B}$. and regular nilpotent e, for which $\mathcal{B}_e=pt$ (indeed the regular locus of $\mathcal{N}$, is exactly where the birational morphism $T^*(G/B)\rightarrow \mathcal{N}$ is an isomorphism).
		Explicitly for $SL_n$, regular nilpotent e is (up to conjugation) a single Jordan block in some basis $v_1..v_n\in V$, and the single Borel in the fiber of the springer map over e, is the upper triangular matrices in this basis. The corresponding flag is that which is determined by this basis $<v_1>\subset <v_1,v_2>\subset,..$
		\bs
		
	Explicit examples:
	
	Example 1. Let $G=SL_3$, e be of Jordan type (2,1). Then $\mathcal{B}_e\simeq \mathbb{P}^1\cup_{pt}\mathbb{P}^1$. (two projective lines, glued at a point). Lets write e in a basis $v_1,v_2,v_3\in V$ as a matrix which has all zeros except for upper right corner. Then explicitly in terms of flags, the first $\mathbb{P}^1$ is flags of the form $0\subset <v_1>\subset V_2\subset V$, where $V_2$ is any 2 dimensional sub vector space between the fixed $<v_1>$ and V. The second $\mathbb{P}^1$ is flags of the form $0\subset V_1\subset <v_1,v_2>\subset V$ and again, the free component is $V_1$ which is one dimensional vector space between 0 and $<v_1,v_2>$. These sets of flags indeed have one point in common, and each is isomorphic to $\mathbb{P}^1$. dim $\mathcal{B}_e=1$
		
		Example 2. Let $G=Sl_4$, let e be of Jordan type (2,2). Then there are two irreducible components of $\mathcal{B}_e$: $\mathbb{P}^1\times \mathbb{P}^1$ and $\mathbb{P}(O\oplus O(-2))$ ($\mathbb{P}$ stands for projectivization). They are glued along a $\mathbb{P}^1$ that is embedded in $\mathbb{P}^1\times \mathbb{P}^1$ diagonally and as the zero section of the other component. dim $\mathcal{B}_e=2$

\subsubsection*{The use of Grothendieck resolution $\tilde{\g}\rightarrow \g$ in the construction of the action}

		The action at the level of Grothendieck groups was constructed using the Grothendieck resolution. In fact, the action at the level of categories, on $D^b(Coh_e(T^*G/B))$, using correspondences as in \cite{BR}, also used the Grothendieck resolution. There is an underlying reason for that. 
		
		Grothendieck resolution is obviously closely related to $T^*G/B\rightarrow \mathcal{N}$. At the level of k points $\tilde{\g}:=(x,B)$ s.t $x\in \g,  B\in \mathcal{B}, x\in Lie(B)$, hence at that level, restricting $\tilde{\g}\rightarrow \g$ to $\mathcal{N}$ at the basis, gives you preimage $T^*G/B$. (Remark:That's not true at the level of schemes. The preimage of $\mathcal{N}\subset \g$ is a non reduced scheme. Its associated reduced scheme is $T^*(G/B)$). (See \cite{MR}, \cite{Be2} for more)
		
		$\tilde{\g}\rightarrow \g$ is very useful because it is \emph{small} whereas the springer map $T^*G/B\rightarrow \mathcal{N}$ is only \emph{semi-small}. That's the reason that some constructions work better using $\tilde{\g}\rightarrow \g$, and then with more work one may make them work for $T^*G/B\rightarrow \mathcal{N}$.

		\subsubsection*{The Weyl group action on $H^*(\mathcal{B}_e)$}
				Let W be the Weyl group. For every nilpotent $e\in \mathcal{N}$ there is an action on W on the cohomology of the springer fiber $H^*(\mathcal{B}_e)$. In general, there is no action of W on the space $\mathcal{B}_e$.
				
				Considering the extreme cases. The case of regular e is not of interest since, $\mathcal{B}_e$ is a point. \textbf{Case e=0:} For e=0 one gets an interesting action on $H^*(\mathcal{B})$. In this case, the action coincide with an older known action that is described in algebraic terms: The cohomology ring of G/B is well understood. There is an isomorphism $H^*(\mathcal{B})\rightarrow sym(\mathfrak{h}^*)/sym(\mathfrak{h}^*)_{>0}^W$. The natural action of W on $sym(\mathfrak{h}^*)/sym(\mathfrak{h}^*)_{>0}^W$, coming from the action of W on the cartan, is the W action.
				\bs

				\textbf{The construction of the action for any $e\in \mathcal{N}$:} 
				(See \cite{L1}).
				Let $\pi_{\g}:\tilde{\g}\rightarrow \g$ be the Grothendieck resolution. One works in the setup of constructible and perverse sheaves. Let $\underline{\mathbb{Q}}$ be the constant sheaf. One first constructs an action of W on $R\pi_{\g,*}\underline{\mathbb{Q}}$:	Since $\pi_{\tilde{\g}}$ is small, $R\pi_{\g,*}\underline{\mathbb{Q}}[dim \g]$ is a perverse sheaf, moreover it is of the form $IC(\g^{rs},\mathcal{L})$, $\mathcal{L}:=((\pi_{\g,*})\underline{\mathbb{Q}})|_{\g^{rs}}$. (Pushforward of constant sheaf restricted to $\g^{rs}$. $\g^{rs}$ stands for regular semi simple elements, those over which $\tilde{\g}$ is an etale cover of rank $|W|$). Since $\tilde{\g^{rs}}\rightarrow \g^{rs}$ is an etale cover of rank $|W|$, W acts on $\mathcal{L}$. By functoriality of IC extension, W acts on $R\pi_{\g,*}\underline{\mathbb{Q}}$. Then the restriction to a fiber $e\in \mathcal{N}$ gives the action on $H^*(\mathcal{B}_e)$. 
				
	\section{The parameter space}

			To construct the local system for a symplectic resolution $X\rightarrow Y$ we need a base:
			Let $V_{\R}:=Pic(X)\otimes \R = H^2(X,\R)$.
			Let $H_i$ be the set of hyperplanes where derived localization doesn't hold.
			Let $V_{\R}^0$ to be the complement of that set of hyperplanes. Let $V_{\CC}^0:=V_{\R}^0\otimes\CC$ be the complexification. That's the topological space which is the base for our local system construction.
			
			Observe that for the springer resolution $T^*{G/B}\rightarrow \mathcal{N}$, $V_{\R}^0$ is the space of regular weights. Hence, $\pi_1(V_{\CC}^0)\simeq Br_{aff,pure}$ - affine pure braid group.
			
			\bs

			\subsubsection*{Another interpretation of the base space}
			
					Recall that for all symplectic resolutions $X^{(i)}$ of Y, the Picard groups are canonically identified. Lets denote this group Pic. One can take the ample cones $C(X^{(i)})_+$ of different symplectic resolutions of Y. There is a Coxeter group, $\mathbb{W}$, which is attached to Y by Namikawa \cite{Na} . In the case of $T^*G/P$ this is the ordinary Weyl group $W_L$. This group has an action on $Pic$. Consider all the ample cones of different symplectic resolutions of Y, and their transitions via the action of $\mathbb{W}$. Let $V_{\R}^{0'}$ be the complement of the boundaries of these cones in $Pic\otimes \R$. There is a conjecture that there is an isomorphism between $V_{\R}^{0'}\simeq V_{\R}^0$. 
			
			\bs
			
			There is a local system of categories on the parameter space $V^0_{\CC}$ with the value the category the category $D^b(\mathcal{D}(G/P)-mod_0)\simeq D^b(Coh_0(T^*G/P))$, which for the case P=B, generalizes the well known weak action of the affine Braid group on this category. Let $\G$ denote the groupoid of the space $V^0_{\CC}$. (Remark: By a local system we mean a weak local system. Attaching categories to points in $V^0_{\CC}$, attaching functors up to isomorphism to homotopy classes of paths between points, s.t concatenation of paths corresponds to composition of the functors. We do not discuss higher compatibilities).

			We first build a local system with value $D^b(\mathcal{D}(G/P)-mod)$. The restriction of the functors attached to paths, to the subcategory $D^b(\mathcal{D}(G/P)-mod_0)$ will give the required local system.
			
			In order to build it, it's convenient to use the presentation by generators and relations for the groupoid $\G$ given by Selvatti. 
			
	\section{The groupoid of the base space -generators and relations}

		\subsection*{Salvetti's generators and relations for groupoid of complexification of completion of real hyperplane arrangement}

			\subsubsection*{Salvetti idea} 
			Given a real hyperplane arrangement in $\mathbb{R}^n$. one looks at the complexification $W_{\CC}^0$ of the complement $W_{\R}^0$, and wants to find a presentation for the groupoid of that space. By constructing a CW complex embedded in $W_{\CC}^0$, whose embedding is a homotopy equivalence, Salvetti is able to describe the generators and relations for the groupoid in a combinatorial way. Generators are the 1 cells, relations are given by the boundaries(attaching maps) of the 2 cells. The zero cells in the CW complex, correspond to the real alcoves in $W^0_{\R}$.

						\subsubsection*{Generators}
						The generators of the groupoid are the positive half loops, going between alcoves A,A' that share a codim 1 face. Let $l_{A,A'}$ be the generator for the path from A to A' 
				
						\subsubsection*{Relations:}
						To express the relations we need to define a notion: 
								\emph{notion: length of path} Given a path in $V_{\CC}^0$ consisting of generators, its length is the number of generators involved. 
						
								\subsubsubsection{First set of relations}	
								one way to describe the relations is: for each two alcoves A,A': all positive minimal length orbit between A and A' are homotopic.
						
								\subsubsubsection{Smaller set of relations:}
						
						However, It's sufficient to take a smaller set of relations:
						
						for each codim 2 face F, and alcove A, that has F in its boundary, there are exactly two positive minimal paths between A and its opposite with respect to F, $A_F^-$. The relation we impose is that these paths are equivalent.

\section{Construction of the local system}

We specialize the setting of $X^{(i)}\rightarrow Y$ to the following case:
Fix a maximal torus $T\subset G$. Fix a levi $L\supset T$. Let P be any parabolic with the levi L. $X^{(i)}=$$T^*G/P^i$ $P^i$ with Levi L. We construct a local system with value $D^b(\mathcal{D}(G/P^i)-mod)$
		
		This category depends on P only up to the levi. 
		
	\bs

	In this section we discuss the parameter space $V^0_{\CC}$. Then we build a functor from the groupoid $\G(V^0_{\CC})$ to Cat, attaching a category to each alcove, and a functor between the categories of the alcoves for each path generator of the groupoid. To prove it's indeed a functor from the groupoid, we check the relations that guarantee that given two homotopy equivalent paths, the composition of the corresponding functors gives isomorphic functors.  
					\bs

	\subsection*{The parameter space, $V^0_{\CC}$, in the case $T^*G/P$}

						In this case, the parameter space $V^0_{\R}\otimes \CC\subset Pic(T^*G/P)\otimes \CC$, has an interpretation in terms of the root space of G.
						
						\begin{claim}
						
						$V_{\R}\simeq \Lambda_L\otimes\R$, $V_{\R}^0\simeq $regular parabolic weights
						\end{claim}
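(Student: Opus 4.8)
The plan is to establish the two halves of the claim in turn, both essentially by assembling facts already recorded above.

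For the identification $V_{\R}\simeq\Lambda_L\otimes\R$: by definition $V_{\R}=Pic(X)\otimes\R$ with $X=T^*G/P$, and the Background records both $Pic(T^*G/P)\simeq Pic(G/P)$ (the projection $T^*G/P\to G/P$ is an affine bundle, hence induces an isomorphism on Picard groups, equivalently a homotopy equivalence inducing an isomorphism on $H^2$) and $Pic(G/P)\simeq\Lambda_L$ (line bundles on the partial flag variety are the associated bundles $\mathcal{L}_\lambda$ of characters $\lambda$ of $P$, and $Hom(P,\mathbb{G}_m)=Hom(L,\mathbb{G}_m)=Hom(L/[L,L],\mathbb{G}_m)=\Lambda_L$). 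Composing and tensoring with $\R$ gives the first identification, under which the quantization $O^\lambda(T^*G/P)$ attached to $\lambda\in\Lambda_L$ is the sheaf $\mathcal{D}_{G/P,\lambda}$ of twisted differential operators recorded above.

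For the identification $V^0_{\R}\simeq\{\text{regular parabolic weights}\}$: $V^0_{\R}$ is by definition the complement in $V_{\R}$ of the hyperplanes on which derived localization fails. By the localization theorem of \cite{BMR2} recorded above, if $\lambda\in\Lambda_L$ is $p$-regular then $\Gamma:D^b(\mathcal{D}_{G/P,\lambda}-mod)\to D^b(A_\lambda-mod)$ is an equivalence, so every $p$-regular parabolic weight lies in $V^0_{\R}$. Conversely, by the definition of $p$-regularity the non-$p$-regular weights are cut out by the congruences $\langle\lambda+\rho,\alpha^\vee\rangle\equiv0\pmod p$ as $\alpha^\vee$ ranges over coroots; intersecting with $\Lambda_L$ removes the congruences indexed by roots $\alpha$ of $L$ (there $\langle\lambda+\rho,\alpha^\vee\rangle=\langle\rho,\alpha^\vee\rangle$ is a fixed nonzero residue mod $p$ for $p\gg 0$, so no wall appears), leaving the affine hyperplanes $\langle\lambda+\rho,\alpha^\vee\rangle\in p\mathbb{Z}$ for $\alpha$ a root of $G$ not in $L$. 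After the rescaling by $1/p$ that makes $\Lambda_L\otimes\R$ the real parameter space these become an honest real hyperplane arrangement, whose complement is by definition the set of regular parabolic weights; so $V^0_{\R}$ is contained in, and --- granting that derived localization indeed fails on each of these walls --- equals $\{\text{regular parabolic weights}\}$. As a sanity check, for $P=B$ one has $\Lambda_L=\Lambda$, the arrangement is the full affine Coxeter arrangement, $V^0_{\R}$ is the space of regular weights and $\pi_1(V^0_{\CC})=Br_{aff,pure}$, matching the Remark above.

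The main obstacle is the converse direction just flagged: showing that derived localization genuinely fails on each wall (so that $V^0_{\R}$ is not strictly larger than the regular locus), together with fixing the normalization that turns the mod-$p$ congruence walls into the affine arrangement $\langle\lambda,\alpha^\vee\rangle\in\mathbb{Z}$ rather than the finite Coxeter arrangement. For the construction of the local system below only the inclusion ``regular $\Rightarrow$ localization holds'' is actually used --- it guarantees that $\Gamma$ is an equivalence at every point of $V^0_{\R}$ --- so I would prove that direction in full and, following the structure of the paper, defer the converse to \cite{B2}.
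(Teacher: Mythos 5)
Your proof follows essentially the same route as the paper: $Pic(T^*G/P)\simeq Pic(G/P)\simeq\Lambda_L$ gives $V_{\R}\simeq\Lambda_L\otimes\R$, and the derived localization theorem of \cite{BMR2} identifies $V^0_{\R}$ with the regular parabolic weights. The paper's own proof is terser --- it simply asserts that derived localization is ``known'' to fail exactly on the singular parabolic walls --- so your explicit separation of the two inclusions and your flagging of the converse (that localization genuinely fails on each wall) is a more careful rendering of the same argument rather than a different one.
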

						\begin{proof}
								To see that $V_{\R}\simeq \Lambda_L\otimes \R$, recall that there are canonicals equivalence $Pic(G/P)\otimes \R \simeq Pic(T^*(G/P))\otimes \R$, and $Pic(G/P)\simeq \Lambda_L$.

								Moreover, the walls in $V_{\R}$ are defined as the hyperplanes where derived localization do not hold. It's known that derived localization do not hold  exactly for singular parabolic weights. (that is $\lambda\in \Lambda_L\otimes \R$ s.t $<\lambda+\rho,\alpha^{\vee}>=0$ mod p for some coroots $\alpha^{\vee}$ with $\g_{\alpha}$ not subset of Lie(L)). 
								\bs
						\end{proof}		
								For convenience denote $\mathfrak{h}_{\R}:=\Lambda\otimes \R$, $\mathfrak{h}_{\mathbb{R},L}:=\Lambda_L\otimes \R$, $\mathfrak{h}_{\R}:=\mathfrak{h}_{\R,L}^{reg}$ (=$V_{\R}^0$ in this case.)

						Denote the walls of singular weights, by $H_{\alpha,n}$
						
						$H_{\alpha,n}:=\lambda\in \Lambda_L\otimes \R |<\lambda+\rho,\alpha^{\vee}>=np$
						
						These affine root hyperplanes is the hyperplane arrangement in $V^0_{\R}$ in this case.
						
						\bs
						
						$\mathfrak{h}^0_{\R}\otimes \CC$ is the topological space which is the base for our local system construction. Observe that for P=B, a Borel, $\pi_1(V_{\CC}^0)\simeq Br_{aff,pure}$ - affine pure braid group.

				\subsection*{The categories attached to the alcoves}
				
				\subsubsection*{Key Lemma $\Gamma(\Dl(G/P))\simeq \Gamma(\Dl(G/Q))$} 

In order to construct a local system with value - the category $D^b(\mathcal{D}(G/P)-mod)$, we have to use different associated parabolic subgroups P,Q. The following lemma about the global sections of the sheaves of differential operators on these spaces will be key in that construction.   

		\begin{lemma}
	Let G be an algebraic groups over algebraically closed field, k, characteristic $p>>0$. Let P,Q be parabolic subgroups with same levi L. Let $\lambda\in \Lambda_L$(parabolic integral weight). Let $A_{\lambda}(G/P):=\Gamma(\mathcal{D}(G/P)_{\lambda})$ 
	
  Then, for a parabolic integral weight, $\lambda\in (\Lambda_L)$ There is an isormophism $\Gamma(\Dl(G/P))\simeq \Gamma(\Dl(G/Q))$. 
 
 \end{lemma}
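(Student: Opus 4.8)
The plan is to reduce the comparison to a statement over the common affinization $Y$ of the two cotangent bundles, using that $T^*(G/P)$ and $T^*(G/Q)$ are symplectic resolutions of $Y$. Write $X:=T^*(G/P)$, $X':=T^*(G/Q)$ and $\pi:X\to Y$, $\pi':X'\to Y$ for the affinization (= moment) maps; by the discussion of the parabolic moment map, $Y$ is a finite cover of $\mathcal{N}_L$ depending only on $L$, it is normal, and $Y=\operatorname{Spec}\Gamma(X,\mathcal{O}_X)=\operatorname{Spec}\Gamma(X',\mathcal{O}_{X'})$. Let $U\subseteq Y$ be the locus over which both $\pi$ and $\pi'$ are isomorphisms; since $Y$ is normal and $\pi,\pi'$ are proper birational, $\operatorname{codim}_Y(Y\setminus U)\geq 2$, and over $U$ there is a canonical isomorphism $\pi^{-1}(U)\simeq(\pi')^{-1}(U)$ identifying the two resolutions. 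Under the canonical identification $Pic(X)\simeq Pic(X')\simeq\Lambda_L$ — which is precisely what makes ``the same $\lambda$'' meaningful for the two resolutions — the twisted crystalline differential operators agree there: $\mathcal{D}_\lambda(G/P)$ and $\mathcal{D}_\lambda(G/Q)$ restrict to one and the same sheaf of algebras $\mathcal{A}_U$ on $\pi^{-1}(U)\simeq(\pi')^{-1}(U)$.

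Next I would show that $\pi_*\mathcal{D}_\lambda(G/P)$ — and, symmetrically, $(\pi')_*\mathcal{D}_\lambda(G/Q)$ — is a normal ($S_2$) sheaf of algebras on $Y$, hence coincides with $j_*\mathcal{A}_U$ for $j:U\hookrightarrow Y$. With respect to the order filtration, $\operatorname{gr}\mathcal{D}_\lambda(G/P)\simeq\mathcal{O}_X$; since a symplectic singularity has rational singularities (for $p\gg 0$), $R\pi_*\mathcal{O}_X=\mathcal{O}_Y$, and, filtering by order, this gives $R^{\geq 1}\pi_*\mathcal{D}_\lambda(G/P)=0$ together with $\operatorname{gr}\bigl(\pi_*\mathcal{D}_\lambda(G/P)\bigr)\simeq\mathcal{O}_Y$. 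As $\mathcal{O}_Y$ is $S_2$ and the order filtration is exhaustive, bounded below, and by coherent pieces, $\pi_*\mathcal{D}_\lambda(G/P)$ is $S_2$, so $\pi_*\mathcal{D}_\lambda(G/P)=j_*\mathcal{A}_U=(\pi')_*\mathcal{D}_\lambda(G/Q)$ as sheaves of algebras on $Y$. Taking global sections over $Y$,
\[
\Gamma(\mathcal{D}_\lambda(G/P))=\Gamma(U,\mathcal{A}_U)=\Gamma(\mathcal{D}_\lambda(G/Q)),
\]
the two equalities holding because $\pi_*\mathcal{D}_\lambda(G/P)=j_*\mathcal{A}_U$ and $(\pi')_*\mathcal{D}_\lambda(G/Q)=j_*\mathcal{A}_U$ respectively. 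Because the surjections $U(\g)_\lambda\twoheadrightarrow A_\lambda(G/P)$ and $U(\g)_\lambda\twoheadrightarrow A_\lambda(G/Q)$ from the earlier observation are induced by the classical moment map, the resulting isomorphism is moreover one of $U(\g)_\lambda$-algebras (the form needed to build the local system), and in particular $\ker\bigl(U(\g)_\lambda\to A_\lambda(G/P)\bigr)=\ker\bigl(U(\g)_\lambda\to A_\lambda(G/Q)\bigr)$.

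I expect the $S_2$ step to be the main obstacle: controlling $R^{\geq 1}\pi_*$ and the depth of $\pi_*\mathcal{D}_\lambda$ along the exceptional locus. It rests on the rational-singularities property of $Y$ — where the hypothesis $p\gg 0$ genuinely enters, e.g. through reduction mod $p$ from characteristic zero — and on a little care about the fact that $Y$ is only a finite cover of $\mathcal{N}_L$ rather than $\mathcal{N}_L$ itself. A secondary point to make precise is the twist-matching claim that $\mathcal{D}_\lambda(G/P)$ and $\mathcal{D}_\lambda(G/Q)$ restrict to the same sheaf $\mathcal{A}_U$ over $U$: under the identification of Picard groups of the two resolutions the $\lambda$-twisted differential operators must agree over $U$, which is forced by the normalization of the universal family of quantizations recalled earlier but should be stated explicitly. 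The remaining ingredients — reducing to a computation on $Y$, the identity $\pi_*\mathcal{O}_X=\mathcal{O}_Y$, the codimension bound for $Y\setminus U$, and compatibility with the $U(\g)_\lambda$-action — are routine.
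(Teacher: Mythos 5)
Your approach is genuinely different from the paper's, which proceeds algebraically: the paper first observes that $U(\g)_\lambda\twoheadrightarrow A_\lambda(G/P)$ and $U(\g)_\lambda\twoheadrightarrow A_\lambda(G/Q)$ are both surjective (for $p\gg 0$, by \cite{BMR}), reduces to showing that the kernels $\ker_P$ and $\ker_Q$ agree inside $U(\g)_\lambda$, then reduces \emph{to characteristic zero} and identifies each kernel as the annihilator of an explicit $\mathcal{D}_\lambda$-module coming from an affine open --- namely $\delta_{[P]}$ on $G/P$, and $j_*\mathcal{O}_U$ on $G/Q$ for $U$ the open $P$-orbit of $[Q]$. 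That proof avoids any contact with the common affinization or with codimension-$\geq 2$ extension; all the geometry is on $G/P$ and $G/Q$ themselves, and the $D$-affineness plus simplicity of the (localized) Weyl algebra are the working tools.

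Your strategy --- compare the pushforwards of $\mathcal{D}_\lambda(G/P)$ and $\mathcal{D}_\lambda(G/Q)$ to the common affinization $Y$ and extend across codimension $\geq 2$ --- is plausible in spirit, but it has a genuine gap where you dismiss it as "secondary." The assertion that $\mathcal{D}_\lambda(G/P)$ and $\mathcal{D}_\lambda(G/Q)$ restrict to \emph{one and the same} sheaf of algebras $\mathcal{A}_U$ on $\pi^{-1}(U)\simeq(\pi')^{-1}(U)$ is not forced by the identification $Pic(X)\simeq Pic(X')$. What you get from the universal-quantization picture is, at best, a statement about the quantization parameter; but the sheaves in question are Azumaya algebras over the Frobenius twist of the cotangent bundle, and two Azumaya algebras in the same Brauer class (or even representing "the same quantization") need not be \emph{isomorphic as $\mathcal{O}$-algebras}, which is what taking $\Gamma$ of both sides requires. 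The twisted differential operator sheaves are constructed from line bundles on the base $G/P$, resp. $G/Q$, not on the cotangent bundles, and the open set $\pi^{-1}(U)$ where the resolutions are identified does \emph{not} sit nicely over an open of the base; so there is no direct geometric mechanism producing the identification you need. This is precisely the kind of problem the paper routes around by going to the level of $U(\g)_\lambda$-quotients and explicit annihilators. Secondary but real issues: you also need (and use without justification) that $Y$ is literally the same for both resolutions, that $R^{\geq1}\pi_*\mathcal{O}_X=0$ and the rational-singularity input hold in characteristic $p$, and that the order filtration on the noncommutative pushforward propagates $S_2$ from its associated graded --- each of these requires more care than "routine." Until the sheaf-level identification over $U$ is actually constructed, the proposal does not close.
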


\begin{proof}
There is a morphism $\ul\rightarrow \Al$. It is the morphism [[1]]$\ul=\Gamma(\Dl(G/B))\rightarrow \Gamma(\Dl(G/P))$ induced from the natural map $G/B\rightarrow G/P$

It is surjective for $char(k)>>0$ (\cite{BMR})
Denote its kernel by $ker_P$. It's enough to show $ker_P= ker_Q$ as subgroups of $u_{\lambda}$. The proof of this equality for $char(k)>>0$ reduces to the same claim in the setup where the base field is $\mathbb{C}$. 

\newcommand{\M}{\mathcal{M}}
\begin{claim}
Let $j:U\hookrightarrow G/P$ be affine open subset of G/P.
Let $\M'$ be a $D_{\lambda}$ module on U. Let $\M:=j_*\M'$. Let $M:=\Gamma(G/P,\M)$. consider it as $\ul$ module under [[1]]. Then $ker_P=Ann_{\ul}M$. 
\bs

\end{claim}

\begin{proof} (of claim)
Let $M':=\Gamma(U,\M')=M$
Consider the composition $\Al\hookrightarrow \Gamma(U,D_{\lambda})\hookrightarrow End(M')$. Both maps are injectives. the first by D affiness of G/P, the second since Weyl algebra is simple. Hence $ker_P\simeq Ann_{\ul}(M')$.

\end{proof}

Proof of Lemma 1:
By the claim, it's enough to find $D_{\lambda}$ modules, $\M_1,\M_2$ on G/P and on G/Q respectively that comes from D module on affine open, and have the same kernel under the action of $\ul$. 

The required modules $\M_1,\M_2$ are - on G/P, let [P]:=1P be the trivial coset. Let $\M_1:=\delta_{[P]}$. On G/Q Let [Q]:=1Q be the trivial coset. Let U:=P.[Q] be the orbit under the action of P. $j:U\hookrightarrow G/Q$ is affine open. Let $\M_2:=j_*O_{U}$. 

\end{proof}

							\subsubsection*{The categories attached to alcoves}
							
								For any parabolic Q with the Levi L, for each $\lambda\in \Lambda_L$ the sheaf $\mathcal{D}_{\lambda}(G/Q)$ is well defined. (using the canonical equivalence $Pic(G/Q)\simeq \Lambda_L$)

								By the 'key lemma', for two parabolic subgroups with the levi L, there is a canonical isomorphism of the global sections algbera $\Gamma(\mathcal{D}(G/P)_{\lambda})\simeq \Gamma(\mathcal{D}(G/Q)_{\lambda})$. Denote this algebra $A_{\lambda}$

								To  each real alcove $\A\subset V^0_{\R}$. attach the category
								$D^b(A_{\lambda}-mod)$ using some $\lambda\in \A\cap \Lambda_L;$  
			
								\bs
								
								\subsubsubsection{Independence of $\lambda$}
								The construction we give will be independent of the choice of $\lambda$ in an alcove A, in the sense that there is the canonical equivalences between the categories $D^b(A_{\lambda}-mod)\rightarrow D^b(A_{\lambda'}-mod)$ for $\lambda,\lambda'\in \mathcal{A}$, (given by localization, tensoring with $O(\lambda'-\lambda)$ and taking global sections). And the functors $F_{\lambda,\mu}$ that we construct for different paths, will sit in a square commutative diagram. 
								
								for $\lambda,\lambda'\in \A $ and $\mu,\mu'$ in adjucent alcoves.

\[
\xymatrix
{D^b(A_{\lambda}-mod)\ar[r]^-{F^{\lambda,\mu}}\ar[d]_{\simeq}
& D^b(A_{\mu}-mod)\ar[d]^-{\simeq} \\
D^b(A_{\lambda'}-mod)\ar[r]^-{F^{\lambda',\mu'}} &  D^b(A_{\mu'}-mod)}
\]

	\subsection*{Functors attached to the generators of the groupoid}
				We now construct the functors to be attached to paths. For the definition we need the following notion:		 
				
				\subsubsection*{Partial order on alcoves, defined by a cone}
		
									\begin{defi}
									Given a real hyperplane arrangement, fix a cone $A_0$. Define: two alcoves A,A' have relation "A' is above A wrt $A_0$" if $A'\subset A+A_0$. Notation is $A'>_{A_0}A$.

									\end{defi}
				
	Remark: an equivalent notion is: $A'>_{A_0}A\iff$ the positive half loop from A to A' belongs to $h_{\mathbb{R}}+i A_0$. 
						
				\subsubsection*{Definition of the functor $F_{\A,\A'}$, assigned to the generator $l_{\A,\A'}$}
				
				Given adjacent alcoves $\A,\A'$, there is a parabolic P, with the property that $\A<_P \A'$. Equivalently, $l_{\A,\A'}\subset V^0_{\R}+i\Lambda_P^+$ where $\Lambda_P^+$ is the positive weights cone for P in $\Lambda_L$.

				Let $\lambda_{\A}$, $\lambda_{\A'}$ be weights in these alcoves.
				Define the associated functor $F_{\A,\A'}^P:D^b(\Gamma(\D_{\lambda_{\A}}(G/P))-mod)\rightarrow D^b(\Gamma(\D_{\lambda_{\A'}}(G/P))-mod)$ to be 
				\begin{equation}
				F^P_{A,A'}:=\Gamma^{P,\lambda_A} (-\otimes O(\lambda_{A'}-\lambda_{A})) Loc^{P,\lambda_A'}.
				\end{equation}
				Where 
				$\Gamma^{P,\lambda_{\A}}:D^b(\mathcal{D}_{\lambda_{\A}}(G/P)-mod)\rightarrow D^b(\Gamma(\D_{\lambda_{\A}}(G/P))-mod), 
				Loc^{P,\lambda_{\A'}}:D^b(\Gamma(\D_{\lambda_{\A'}}(G/P))-mod)\rightarrow D^b(\mathcal{D}_{\lambda_{\A'}}(G/P)-mod)$, are global sections and localization functors respectively.  
				
				\begin{lemma}
				
				The functor (1) is independent of a choice of parabolic P, for which $\A<_PA'$. 
				
				\end{lemma}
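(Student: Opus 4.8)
The plan is to reduce the statement to the key lemma already proven, namely $\Gamma(\mathcal{D}_\lambda(G/P))\simeq \Gamma(\mathcal{D}_\lambda(G/Q))$ for associated parabolics $P,Q$ with common Levi $L$. First I would observe that two adjacent alcoves $\mathcal{A},\mathcal{A}'$ can be separated by a wall $H_{\alpha,n}$ for a coroot $\alpha^\vee$ with $\mathfrak{g}_\alpha \not\subset \mathrm{Lie}(L)$; the condition $\mathcal{A}<_P\mathcal{A}'$ says that the positive half-loop $l_{\mathcal{A},\mathcal{A}'}$ lies in $\mathfrak{h}_{\mathbb{R},L}^{0}+i\Lambda_P^+$, which constrains which parabolics $P$ are admissible. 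The point is that more than one parabolic $P$ with Levi $L$ may satisfy $\mathcal{A}<_P\mathcal{A}'$ (they are exactly those whose positive cone $\Lambda_P^+$ contains the direction of the half-loop), and we must see that the functor $F^P_{\mathcal{A},\mathcal{A}'}$ does not see the choice.

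Next I would unwind the definition $F^P_{\mathcal{A},\mathcal{A}'}=\Gamma^{P,\lambda_{\mathcal{A}}}(-\otimes O(\lambda_{\mathcal{A}'}-\lambda_{\mathcal{A}}))\,\mathrm{Loc}^{P,\lambda_{\mathcal{A}'}}$ and note that all three pieces are defined on $\mathcal{D}$-modules on $G/P$, but that under the canonical isomorphism $A_\lambda:=\Gamma(\mathcal{D}_\lambda(G/P))\simeq\Gamma(\mathcal{D}_\lambda(G/Q))$ of the key lemma, the source and target categories $D^b(A_{\lambda_{\mathcal{A}}}\text{-}mod)$ and $D^b(A_{\lambda_{\mathcal{A}'}}\text{-}mod)$ are literally the same regardless of whether we build them from $P$ or from $Q$. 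So the claim is that the endo-operation one gets on $D^b(A_{\lambda_{\mathcal{A}}}\text{-}mod)\to D^b(A_{\lambda_{\mathcal{A}'}}\text{-}mod)$ is the same whether one routes through $\mathcal{D}$-modules on $G/P$ or on $G/Q$. The natural strategy is: by the derived localization theorem (valid since $\lambda_{\mathcal{A}},\lambda_{\mathcal{A}'}$ are $p$-regular, being inside alcoves, hence in $V^0_{\mathbb{R}}$), $\mathrm{Loc}$ and $\Gamma$ are mutually inverse equivalences, so $F^P_{\mathcal{A},\mathcal{A}'}$ is really just "transport the twist by $O(\lambda_{\mathcal{A}'}-\lambda_{\mathcal{A}})$ through the localization equivalence." Thus I would show that the diagram relating localization on $G/P$, localization on $G/Q$, and the twisting functors commutes, i.e. that tensoring by the line bundle $O(\mu)$ for $\mu\in\Lambda_L$ is compatible with the identification of global-sections algebras.

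Concretely, the argument I expect to work: on $G/P$ the twisting $-\otimes O(\lambda'-\lambda)$ intertwines the $A_\lambda$- and $A_{\lambda'}$-module structures via the canonical isomorphism of \emph{sheaves} $\mathcal{D}_\lambda(G/P)\otimes O(\lambda'-\lambda)\simeq \mathcal{D}_{\lambda'}(G/P)$; since $\lambda'-\lambda$ is a \emph{integral} weight in $\Lambda_L$ (so $O(\lambda'-\lambda)$ descends from $G/P$ and from $G/Q$ to genuine line bundles with isomorphic spaces of sections — indeed $\Gamma(G/P,O(\mu))\simeq\Gamma(G/Q,O(\mu))$ as $L$-representations, both being the induced module from $\mu$), the induced map on global-sections algebras $A_\lambda\to A_{\lambda'}$ is the same whether computed on $G/P$ or on $G/Q$, because both agree with the map on $U(\mathfrak{g})_\lambda\to U(\mathfrak{g})_{\lambda'}$-level (translation functors) via the surjections $U(\mathfrak{g})_\lambda\twoheadrightarrow A_\lambda$ of the observations in Section 1. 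Hence $F^P_{\mathcal{A},\mathcal{A}'}$ and $F^Q_{\mathcal{A},\mathcal{A}'}$, both being "the unique functor making the localization squares commute and inducing the given algebra map $A_{\lambda_{\mathcal{A}}}\to A_{\lambda_{\mathcal{A}'}}$ on $H^0$ of the identity objects," coincide.

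The main obstacle I anticipate is pinning down precisely the compatibility of the \emph{line-bundle-twisting} step across $P$ and $Q$: the equivalence $\mathcal{D}_\lambda(G/P)\text{-}mod\simeq A_\lambda\text{-}mod$ and likewise for $Q$ identify the module categories, but the twisting functor $-\otimes O(\mu)$ is \emph{a priori} a geometric operation on a specific flag variety, and one must check it transports to the \emph{same} auto-operation on $D^b(A_\lambda\text{-}mod)$. I would handle this by showing the twist corresponds, under localization, to the translation functor $T_\lambda^{\lambda'}$ on $U(\mathfrak{g})$-modules (restricted to the appropriate central-character blocks), which is manifestly independent of the flag variety used; the key lemma guarantees the blocks $A_\lambda\text{-}mod\subset U(\mathfrak{g})_\lambda\text{-}mod$ cut out by $P$ and by $Q$ coincide (same two-sided ideal $\ker_P=\ker_Q$), so the restricted translation functors agree. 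Once that identification is in place, independence of $P$ is immediate, and the only remaining bookkeeping — that the two admissible parabolics give the \emph{same} weight difference $\lambda_{\mathcal{A}'}-\lambda_{\mathcal{A}}$, which is automatic since $\lambda_{\mathcal{A}},\lambda_{\mathcal{A}'}$ are fixed once the alcoves are — is routine.
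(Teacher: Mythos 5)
Your plan is broadly aligned with the paper's: factor through the Borel, identify the twist with a translation functor on $U(\mathfrak g)_\lambda$-modules, and use the key lemma to see that the resulting map on global-sections algebras is the same for $P$ and for $Q$. But the closing step --- ``the restricted translation functors agree, hence $F^P_{\mathcal A,\mathcal A'}\simeq F^Q_{\mathcal A,\mathcal A'}$'' --- has a genuine gap, and it is exactly the point the paper singles out as nontrivial. The embedding $A_\lambda\text{-}mod\hookrightarrow u(\g)_\lambda\text{-}mod$ is fully faithful and conservative at the \emph{abelian} level, but $D^b(A_\lambda\text{-}mod)\to D^b(u(\g)_\lambda\text{-}mod)$ need not be fully faithful, so the commutative square $\pi^*\circ F^P_{\lambda,\mu}\simeq T_{\lambda,\mu}\circ\pi^*$ does not by itself pin down $F^P_{\lambda,\mu}$ as ``the restriction of $T_{\lambda,\mu}$.'' Knowing $\ker_P=\ker_Q$ tells you the abelian subcategories agree, but not that two derived functors which both intertwine $T_{\lambda,\mu}$ through $\pi^*$ must be isomorphic.

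The paper resolves this by a bimodule argument you do not have. One observes that $T_{\lambda,\mu}$ and $F^P_{\lambda,\mu}$ are given by derived tensoring with bimodules $B_u=R\Gamma({}_\lambda\mathcal D_\mu)$ on $G/B$ and $B_A=R\Gamma({}_\lambda\mathcal D_\mu)$ on $G/P$, respectively, and then proves a cohomology-vanishing statement, $H^i(gr\,{}_\lambda\mathcal D_\mu)=H^i(O_{T^*(G/P)}(\mu-\lambda))=0$ for $i>0$, which uses precisely the hypothesis $\mathcal A<_P\mathcal A'$ (so $\mu-\lambda$ lies in the positive cone of $P$). This places $B_A$ in the heart of $D^b(A_\lambda\otimes A_\mu^{op}\text{-}mod)$, and only then can the abelian-level fully-faithfulness and conservativity be invoked to conclude that $B_A$ is determined by $B_u$, hence independent of $P$. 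Without showing the bimodule lives in the heart (or some substitute argument), your proof does not go through at the level of derived categories; you should add this reduction to bimodules and the vanishing claim, and note where the positivity hypothesis $\mathcal A<_P\mathcal A'$ is actually used.
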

				
				The proof of this lemma will be given in a later section. This claim is the reason, we denote this functor by a symbol $F_{A,A'}$, omitting P from the notation.

			\subsection*{Proof that the relations of the groupoid hold}
								
										To prove our construction is a functor from the groupoid to Cat, we need to check the relations hold. For this we use the following concrete claim.  
										
								\begin{claim}
										Consider our hyperplane arrangement $V_{\R}^0$. Let F be a codimension 2 face. Let $\A$ be an alcove which contains F in its boundary. Let $\Aop$ denote the opposite alcove with respect to F. Let C be the cone in $V_{\R}^0$, defined by the hyperplanes in the boundary of the alcove $\A$, that intersect the face F. Then the two minimal paths in $V_{\CC}^0$ from $\Aop$ to $\A$ are going up with respect to the cone C.   
								\end{claim}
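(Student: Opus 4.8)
The plan is to reduce the statement to a purely local, finite-dimensional fact about the essential hyperplane arrangement at the codimension 2 face $F$, and then to verify that fact by a direct computation with rank-2 arrangements. First I would localize near $F$: since only the hyperplanes through $F$ matter for the combinatorics of the two minimal galleries from $\Aop$ to $\A$ and for the cone $C$, I would replace the whole affine arrangement $V^0_{\R}$ by the (central, rank $2$) arrangement $\mathcal{A}_F$ consisting of the linear parts of the hyperplanes containing $F$, working in the $2$-dimensional quotient $V_{\R}/F$. In this quotient $\A$ and $\Aop$ become a pair of opposite chambers (sectors), and a minimal positive gallery from $\Aop$ to $\A$ is exactly a sequence of wall crossings that traverses one of the two "sides" of the arrangement. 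The cone $C$ of the statement is, after this reduction, the convex cone spanned by the walls of $\A$ that meet $F$ — equivalently, one of the two half-planes into which the pair of walls of $\A$ through $F$ divides $V_{\R}/F$, namely the one containing $\A$ (and hence not containing $\Aop$).

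**Key steps, in order.** (1) Set up the quotient arrangement $\mathcal{A}_F$ in $V_{\R}/F \cong \R^2$ and record that $\A, \Aop$ are antipodal chambers; enumerate the two minimal galleries $g^+, g^-$ joining them, corresponding to sweeping clockwise or counterclockwise through the finitely many chambers on each side. (2) Recall the remark stated just above the Claim: $\A' >_{A_0} \A$ iff the positive half-loop $l_{\A,\A'}$ lies in $\mathfrak{h}_{\R} + i A_0$; so "the minimal path goes up with respect to $C$" is the assertion that every generator $l_{\A_i,\A_{i+1}}$ appearing in $g^{\pm}$ satisfies $\A_i <_C \A_{i+1}$, i.e.\ $\A_{i+1} \subset \A_i + C$. (3) Prove the combinatorial core: in a rank-$2$ central arrangement, if $C$ denotes the closed convex cone spanned by the two walls of $\A$ incident to the apex, then along either minimal gallery from $-\A$ to $\A$ each successive chamber is obtained from the previous one by crossing a wall "in the $C$ direction", so $\A_{i+1} \subset \A_i + C$ holds at every step. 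This is where I would actually draw the picture: order the rays of $\mathcal{A}_F$ cyclically, note that $\A$ occupies an angular sector bounded by two consecutive rays $r, r'$, that $C$ is the cone on the arc from $r$ to $r'$ through $\A$, and that moving from $\Aop$ toward $\A$ along either side consists precisely of steps each of which displaces the chamber into that same angular sector's cone. (4) Transport back: because passing to $V_{\R}/F$ only forgets directions along $F$ (which are shared by $\A_i$, $\A_{i+1}$ and $C$ alike), the relation $\A_{i+1}\subset \A_i + C$ in the quotient lifts to the same relation upstairs. Conclude that both minimal paths go up with respect to $C$.

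**Where the difficulty lies.** The representation-theoretic and localization machinery plays no role here; the content is entirely the rank-$2$ combinatorial lemma in step (3), and the only real subtlety is getting the definitions of "opposite alcove with respect to $F$", "the cone $C$ defined by the hyperplanes in $\partial\A$ meeting $F$", and "going up with respect to $C$" to line up so that the statement is literally true rather than true up to replacing $C$ by its opposite. Concretely, one must check that $C$ is the half-plane on the $\A$-side of the two relevant walls (not the $\Aop$-side), and that a positive half-loop crossing a wall $H$ from chamber $\A_i$ to $\A_{i+1}$ indeed moves "into $C$" — this uses that $H$ itself is one of the rays bounding $C$ or lies in the interior of $C$, which follows from $H$ passing through $F$. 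I would handle this by fixing once and for all a linear functional positive on $C^\circ$ and checking it strictly increases along each gallery, which simultaneously pins down the orientation and proves the "up" assertion. The main obstacle, then, is bookkeeping with orientations in the rank-$2$ model, not any deep mathematics; once the picture is set up correctly the verification is immediate.
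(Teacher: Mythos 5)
The paper's own ``proof'' of this claim is the single phrase ``That's easy to verify,'' so your write-up is in effect supplying the argument the paper leaves out. Your overall strategy --- pass to the rank-$2$ quotient $V_{\R}/F$, observe that the hyperplanes separating $\Aop$ from $\A$ are exactly those through $F$ (so the two minimal galleries live entirely in the quotient arrangement $\mathcal{A}_F$), and then do a rank-$2$ combinatorial check --- is the right one and is surely what ``easy to verify'' is meant to gesture at. The assertion is in fact true, and the cleanest way to see it is the direct angular bookkeeping: write the chambers of the counterclockwise gallery as sectors $(\phi_i,\phi_{i+1})$ with $\phi_1<\phi_2<\cdots<\phi_{k+1}$, so that $\A_1=\Aop=(\phi_1,\phi_2)$, $\A_k=\A=(\phi_k,\phi_{k+1})$, and $C=[\phi_k,\phi_{k+1}]$ is the antipodal sector to $\Aop$, i.e.\ $\phi_k=\phi_1+\pi$, $\phi_{k+1}=\phi_2+\pi$. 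Then for $i\ge 2$ one has $\phi_{k+1}-\phi_i\le\pi$, so $\A_i+C$ is the sector $[\phi_i,\phi_{k+1}]$, which visibly contains $\A_{i+1}=(\phi_{i+1},\phi_{i+2})$; and for $i=1$ one has $\A_1+C=V_{\R}$ outright, since $C$ is the antipode of $\Aop$, so the first step holds vacuously. The clockwise gallery is symmetric.

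Two points in your proposal to correct. First, the phrase ``one of the two half-planes into which the pair of walls of $\A$ through $F$ divides $V_{\R}/F$'' is wrong: two distinct lines through the origin cut $\R^2$ into four sectors, not two half-planes, and $C$ is the one of those four containing (the projection of) $\A$. Your later description, ``the cone on the arc from $r$ to $r'$ through $\A$,'' is the accurate one, so this is a slip in exposition rather than a genuine gap. Second, and more substantively, the proposed device of fixing a single linear functional $\ell>0$ on $C^{\circ}$ and checking it ``strictly increases along the gallery'' does not establish the containment $\A_{i+1}\subset \A_i+C$: by cone duality one must verify nonnegativity of $\A_{i+1}$ against every functional in $\A_i^{*}\cap C^{*}$, a cone that varies with $i$, and moreover at the first step the relation is vacuous ($\A_1+C=V_{\R}$), so no single functional can be strictly monotone there. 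The direct computation above is both simpler and actually proves what is needed.
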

									That's easy to verify.
										
								\begin{claim} Given an alcove $\A\subset V_{\R}^0$, and a codimension 2 face F in its boundary. There is a parabolic P(with the levi L), such that the 2 positive minimal orbits from $\Aop$ to $\A$ are orbits that always go up with respect to the partial order on alcoves given by the positive cone $\Lambda_P^+$ of P.  						
								\end{claim}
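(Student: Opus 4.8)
The plan is to reduce the whole statement to the rank-two reflection arrangement transverse to $F$, where it becomes an elementary fact about positive systems of root systems.

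First I would fix the local picture at $F$. Write $E:=\mathfrak h_{\mathbb R,L}=\Lambda_L\otimes\mathbb R$ for the ambient space, so that the walls are the affine hyperplanes $H_{\alpha,n}=\{\lambda:\langle\lambda+\rho,\alpha^\vee\rangle=np\}$ with $\g_\alpha\not\subset\mathrm{Lie}(L)$. The face $F$ spans an affine subspace of codimension $2$; let $\Phi_F$ be the set of roots $\alpha$ with $F\subset H_{\alpha,n}$ for some $n$, equivalently with $\alpha^\vee$ vanishing on the linear direction $\overrightarrow F\subset E$. Two observations are needed. (i) The alcoves $\A$ and $\Aop$ are separated only by the walls through $F$: any other wall has $F$ strictly on one side, hence a whole neighbourhood of $F$, hence both alcoves, on that side. (ii) The restrictions $\alpha^\vee|_E$ for $\alpha\in\Phi_F$ span the $2$-dimensional space $(\overrightarrow F)^\perp\subset E^*$, and $\Phi_F\cup\Phi_L$ is the root system of a Levi $M$ with $L\subset M$; consequently the relative root system $\bar\Phi_F$ (roots of $M$ modulo $W_L$-orbits, viewed as functionals on $E$) is an honest rank-two root system, of type $A_1^2$, $A_2$, $B_2$ or $G_2$. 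These are standard facts about (the restriction of) the affine Weyl arrangement at a codimension-$2$ face.

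Next I would analyse the minimal paths. A positive minimal path from $\Aop$ to $\A$ has length equal to the number of walls separating them, which by (i) is the number of walls through $F$, i.e. $|\bar\Phi_F^{+}|$; a parity/length count then forces each such wall to be crossed exactly once, no other wall to be crossed, and each crossing to go in the direction dictated by the two endpoints. Hence \emph{every} minimal path crosses precisely the same set of signed roots, namely $\Psi:=\{\gamma\in\Phi_F:\ \langle\cdot,\gamma^\vee\rangle\ \text{is larger on }\A\text{ than on }F\}$, which is exactly the set of roots of $\bar\Phi_F$ positive at a point of $\A$ — that is, $\Psi$ is a positive system of $\bar\Phi_F$. (In passing this recovers that there are exactly two minimal paths, the two ways around $F$ in the transverse plane, consistently with the preceding claim.) I would also record the elementary dictionary: for adjacent alcoves, the positive half-loop crossing $H_{\gamma,n}$ towards larger $\langle\cdot,\gamma^\vee\rangle$ lies in $\mathfrak h_{\mathbb R}+i\Lambda_Q^{+}$ — i.e. realises $>_{\Lambda_Q^{+}}$ — if and only if $\Lambda_Q^{+}\subset\{\langle\cdot,\gamma^\vee\rangle\ge0\}$, which (since $\gamma\notin\Phi_L$ always for a crossed wall) means exactly that $\gamma$ is a root of the nilradical of $Q$. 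So a positive path "always goes up with respect to $\Lambda_Q^{+}$" precisely when every signed root it crosses appears in the nilradical of $Q$.

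The proof then concludes by producing the parabolic. By the last two paragraphs it suffices to find a parabolic $P$ with Levi $L$ such that $\Psi\subset\Phi(\mathfrak n_P)$. Since $\Psi$ is a positive system of the rank-two system $\bar\Phi_F$, it lies in an open half-space: there is a real cocharacter $\xi_0$ of $Z(L)^\circ$ with $\langle\gamma,\xi_0\rangle>0$ for all $\gamma\in\Psi$. Perturbing $\xi_0$ slightly to a cocharacter $\xi$ that is regular with respect to all roots with $\g_\alpha\not\subset\mathrm{Lie}(L)$ (this keeps the strict inequalities), the parabolic $P:=P(\xi)$ has Levi $Z_G(\xi)=L$ and nilradical roots $\{\gamma:\langle\gamma,\xi\rangle>0\}\supset\Psi$. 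Both minimal paths from $\Aop$ to $\A$ therefore go up with respect to $\Lambda_P^{+}$, as claimed; note this refines the previous claim, since $\Lambda_P^{+}$ is in general strictly smaller than the tangent cone $C$ of $\A$ along $F$. The main obstacle is the bookkeeping of the middle paragraph: checking carefully that both minimal paths cross the identical set of signed roots and that this set is a positive system of $\bar\Phi_F$, and nailing down the equivalence between "the half-loop goes up with respect to $\Lambda_Q^{+}$" and "the crossed root lies in the nilradical of $Q$". Everything else is a standard property of reflection arrangements or a one-line convexity argument.
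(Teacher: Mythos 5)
Your argument is correct and is essentially the route the paper takes: reduce to the rank-two arrangement transverse to $F$, note that both minimal paths cross exactly the walls through $F$, once each and always in the direction dictated by $\A$ (this is the content of Claim~3, which the paper declares ``easy to verify''), and then choose a parabolic $P$ whose positive cone lies in the tangent cone of $\A$ along $F$. The paper compresses the final step to a single sentence merely asserting that such a $P$ exists; your perturbation argument --- taking a cocharacter $\xi$ of $Z(L)^{\circ}$ strictly positive on the crossed roots, making it regular, and setting $P=P(\xi)$ --- supplies the justification for that assertion that the paper leaves implicit.
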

								\begin{proof}
								By claim 3, it works for P whose positive cone corresponds to a cone defined by the alcove $\A$ and face F. 
								\end{proof}

								\bs
								
								Finally, It will follow that the construction is indeed a functor from the groupoid if we prove the following claim
								
								\begin{claim}
								$F_{A,A'}$ satisfies the following relation: Any two paths between two alcoves, that go through increasing alcoves according to a fixed parabolic P, have isomorphic corresponding functors. 
						
								\end{claim}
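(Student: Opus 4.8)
The plan is to unfold each elementary functor appearing in the composite and then telescope, exploiting the fact that over $V^0_{\R}$ the global sections and localization functors are mutually quasi-inverse equivalences. Fix the parabolic $P$ (with Levi $L$) and an increasing chain of alcoves $A_0 <_P A_1 <_P \cdots <_P A_n$, with chosen weights $\lambda_i := \lambda_{A_i}\in A_i\cap\Lambda_L$. First I would write out the iterated composite $F^P_{A_{n-1},A_n}\circ\cdots\circ F^P_{A_0,A_1}$, where, reading the definition with the evident typing,
\[
F^P_{A_i,A_{i+1}}=\Gamma^{P,\lambda_{i+1}}\circ\bigl(-\otimes O(\lambda_{i+1}-\lambda_i)\bigr)\circ Loc^{P,\lambda_i},
\]
and observe that the iterated composite contains, for each $i=1,\dots,n-1$, an interior block $Loc^{P,\lambda_i}\circ\Gamma^{P,\lambda_i}$ formed from the \emph{same} parabolic and the \emph{same} weight $\lambda_i$.

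The first step is to cancel these interior blocks. Since every traversed alcove lies in $V^0_{\R}$, each $\lambda_i$ is $p$-regular, so the parabolic derived localization theorem (\cite{BMR2}, quoted above) shows that $\Gamma^{P,\lambda_i}$ and $Loc^{P,\lambda_i}$ are mutually quasi-inverse equivalences, whence $Loc^{P,\lambda_i}\circ\Gamma^{P,\lambda_i}\cong\mathrm{id}$ on $D^b(\mathcal{D}_{G/P,\lambda_i}-mod)$. This is precisely where the hypothesis of a \emph{fixed} $P$ enters: the interior $\Gamma$ and $Loc$ issue from and return to the same D-module category, so they cancel; with varying associate parabolics the interior block would be $Loc^{Q,\lambda_i}\circ\Gamma^{P,\lambda_i}$, which is not the identity and would instead force one to invoke the Key Lemma. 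The second step is to merge the remaining twists, using $O(\mu)\otimes_{O_{G/P}}O(\nu)\cong O(\mu+\nu)$ for $\mu,\nu\in\Lambda_L$ (and that tensoring a $\lambda$-twisted $\mathcal{D}$-module by $O(\mu)$ yields a $(\lambda+\mu)$-twisted one), so that the surviving chain of twists collapses to $-\otimes O\bigl(\sum_{i=0}^{n-1}(\lambda_{i+1}-\lambda_i)\bigr)=-\otimes O(\lambda_n-\lambda_0)$.

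Putting the two steps together, the iterated composite is isomorphic to $\Gamma^{P,\lambda_n}\circ\bigl(-\otimes O(\lambda_n-\lambda_0)\bigr)\circ Loc^{P,\lambda_0}$, an expression depending only on the endpoint alcoves $A_0,A_n$, the parabolic $P$, and the chosen endpoint weights — the intermediate alcoves have disappeared entirely. Hence any two paths with the same endpoints passing through alcoves increasing with respect to $P$ have isomorphic composite functors, which is the assertion; by the Salvetti presentation this applies in particular to the two minimal paths around a codimension-$2$ face, which Claim 4 shows are simultaneously increasing for a single parabolic, thereby completing the verification that the construction is a functor out of the groupoid. Finally I would invoke the ``Independence of $\lambda$'' square (the canonical equivalences $D^b(A_{\lambda}-mod)\simeq D^b(A_{\lambda'}-mod)$ for $\lambda,\lambda'$ in a common alcove, given by localization, twisting and global sections) to remove the residual dependence on the choices $\lambda_0\in A_0$ and $\lambda_n\in A_n$.

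I expect the only care required to be bookkeeping rather than a genuine obstacle: one must record that \emph{every} alcove on the path lies in $V^0_{\R}$, so that $p$-regularity, and therefore the localization equivalence, holds at each intermediate weight; and one should note that each identification used ($Loc\circ\Gamma\cong\mathrm{id}$ and $O(\mu)\otimes O(\nu)\cong O(\mu+\nu)$) is invoked only at the level of isomorphism classes of functors, which is all a weak local system demands, so no higher-coherence checks are needed. No geometric ingredient beyond the cited derived localization theorem is required, and the combinatorial input is already supplied by the preceding claims.
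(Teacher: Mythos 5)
Your proposal is correct and takes essentially the same approach as the paper: the paper's one-line proof invokes precisely the two cancellations you spell out, namely $\Gamma^{P,\lambda}\circ Loc^{P,\lambda}\simeq \mathrm{Id}$ (equivalently $Loc^{P,\lambda}\circ\Gamma^{P,\lambda}\simeq\mathrm{Id}$, both holding at $p$-regular $\lambda$) and $(-\otimes O(\lambda'-\lambda))\otimes O(\lambda''-\lambda')\simeq -\otimes O(\lambda''-\lambda)$. You have merely unfolded the telescoping in full and correctly flagged the two places where hypotheses enter (the fixed $P$ makes the interior $Loc\circ\Gamma$ blocks cancel; $p$-regularity of each intermediate weight is needed for the localization equivalence), so there is no gap.
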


						\begin{proof}
						Follows from definition since $\Gamma^{P,\lambda_{A'}}Loc^{P,\lambda_{A'}}\simeq Id$, and $(-\otimes O(\lambda'-\lambda))\otimes O(\lambda''-\lambda')\simeq (-\otimes O(\lambda''-\lambda))$
								
						\end{proof}

\subsection*{Proof of Lemma 2: $F_{\A,\A'}$ is well defined}

Recall - Let $\A,\A'$ be adjacent alcoves in $V_{\R}^0$ with shared codim 1 face. Let P and Q be two parabolic subgroups that have the Levi L, and satisfy $\A<_{P}\A'$ and $\A<_{Q}\A'$. Lemma 2 is the claim that there is an isomorphism of  functors $F_{\A,\A'}^P\simeq F_{\A,\A'}^Q$, where $F_{\A,\A'}^P:=\Gamma^{P,\lambda_A} (-\otimes O(\lambda_{A'}-\lambda_{A})) Loc^{P,\lambda_A'}.$
\bs

Proof Plan: Let B be a borel that contains T and contained in P. $T\subset B\subset P$. First, we explain the independence of choice of P, for the case of levi L s.t P=B. Then for general levi L, and parabolic P, we use this fact, and a commutative diagram that contains the functors $F_{\A,\A'}^P,F_{\A,\A'}^B$ and the * pullback functor pullback functor of D modules at the level of global sections $\pi^*:D^b(\Gamma(\mathcal{D}(G/P)_{\lambda})-mod)\rightarrow D^b(\Gamma(\mathcal{D}(G/B)_{\lambda})-mod)$ , ($\lambda\in (\Lambda_L)^{reg}$), to prove the isomorphism $F_{\A,\A'}^P\simeq F_{\A,\A'}^Q$. 

\bs

The following lemma assures that for the special case where L is such that the parabolic P is a Borel P=B, the functor $F^{B}_{\lambda,\mu}$ attached to weights $\lambda$, is independent of B. (remember the triples $\lambda,\mu,B$ that appear in $F^B_{\lambda,\mu}$ satisfy $\lambda<_B\mu$) 
	\begin{lemma} 
	The functor $F^{B}_{\lambda,\mu}:\Gamma(\mathcal{D}_{\lambda}(G/B))-mod\rightarrow \Gamma(\mathcal{D}_{\mu}(G/B))-mod$ is isomorphic to translation functor $T_{\lambda,\mu}:u(\g)_{\lambda}-mod\rightarrow u(\g)_{\mu}-mod$

	i.e

	\[
\xymatrix
{D^b (\Gamma(\mathcal{D}_{\lambda}(G/B))-mod)\ar[r]^-{F_{\lambda,\mu}^B} \ar[d]_{\simeq}& D^b(\Gamma(\mathcal{D}_{\mu}(G/B))-mod)  \ar[d]^-{\simeq}\\
D^b(u(\g)_{\lambda}-mod) \ar[r]^-{T_{\lambda,\mu}} & D^b(u(\g)_{\mu}-mod)}
\]

\end{lemma}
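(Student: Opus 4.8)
The plan is to transport the statement to the category of twisted $\mathcal{D}$-modules via the characteristic $p$ derived localization theorem, where $F^{B}_{\lambda,\mu}$ becomes the geometric translation functor ``tensor by the line bundle $O(\mu-\lambda)$'', and then to identify this geometric functor with the algebraic translation functor $T_{\lambda,\mu}$ --- the characteristic $p$ analogue of the classical compatibility of translation functors with Beilinson--Bernstein localization. In detail: since $\lambda,\mu$ lie in adjacent alcoves of $V^{0}_{\R}$ they are both $p$-regular, so by \cite{BMR} the global sections and localization functors are mutually inverse equivalences $D^{b}(\mathcal{D}_{\lambda}(G/B)-mod)\simeq D^{b}(u(\g)_{\lambda}-mod)$ (and likewise for $\mu$), compatibly with the canonical isomorphism $\Gamma(\mathcal{D}_{\lambda}(G/B))\simeq u(\g)_{\lambda}$. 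By construction $F^{B}_{\lambda,\mu}$ is the composite: localize at $\lambda$, tensor over $O_{G/B}$ by $O(\mu-\lambda)$, take global sections at $\mu$; this is legitimate because $-\otimes_{O_{G/B}}O(\mu-\lambda)$ is an equivalence $D^{b}(\mathcal{D}_{\lambda}(G/B)-mod)\xrightarrow{\sim}D^{b}(\mathcal{D}_{\mu}(G/B)-mod)$ (a $\lambda$-twisted $\mathcal{D}$-module tensored with $O(\mu-\lambda)$ is a $\mu$-twisted one). So under the two localization equivalences the lemma reduces to identifying this geometric translation with $T_{\lambda,\mu}$.

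To compare them I would present both as tensor functors by $(u(\g)_{\mu},u(\g)_{\lambda})$-bimodules. Writing $\mathrm{Loc}^{B,\lambda}(M)\simeq\mathcal{D}_{\lambda}(G/B)\otimes^{L}_{u(\g)_{\lambda}}M$ and moving the locally free sheaf $O(\mu-\lambda)$ across the tensor product (using that $u(\g)_{\lambda}$ has finite global dimension for $\lambda$ $p$-regular), one gets
\[
F^{B}_{\lambda,\mu}(M)\ \simeq\ R\Gamma\bigl(G/B,\ \mathcal{D}_{\lambda}(G/B)\otimes_{O_{G/B}}O(\mu-\lambda)\bigr)\otimes^{L}_{u(\g)_{\lambda}}M,
\]
so $F^{B}_{\lambda,\mu}$ is tensoring with the derived global sections of the geometric translation sheaf-bimodule $\mathcal{T}_{\lambda,\mu}:=\mathcal{D}_{\lambda}(G/B)\otimes_{O_{G/B}}O(\mu-\lambda)$, a $(\mathcal{D}_{\mu},\mathcal{D}_{\lambda})$-bimodule on $G/B$. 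On the other side, $T_{\lambda,\mu}$ is $M\mapsto\mathrm{pr}_{\mu}(M\otimes_{k}V_{\mu-\lambda})$ --- tensoring with the finite-dimensional module $V_{\mu-\lambda}$ having $\mu-\lambda$ among its extremal weights and projecting onto the summand of central character $\mu$ --- i.e.\ tensoring over $u(\g)_{\lambda}$ with the algebraic translation bimodule $\mathrm{pr}_{\mu}(u(\g)_{\lambda}\otimes_{k}V_{\mu-\lambda})$. It then suffices to produce an isomorphism of $(u(\g)_{\mu},u(\g)_{\lambda})$-bimodules $R\Gamma(G/B,\mathcal{T}_{\lambda,\mu})\simeq\mathrm{pr}_{\mu}(u(\g)_{\lambda}\otimes_{k}V_{\mu-\lambda})$. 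This is the characteristic $p$ incarnation of the Beilinson--Bernstein / Hotta--Takeuchi--Tanisaki identification of the translation bimodule: compute $\Gamma(G/B,\mathcal{D}_{\lambda}\otimes O(\mu-\lambda))$ via the $G$-equivariant filtration on $\mathcal{D}_{\lambda}$ together with a Bott-type vanishing for the line bundles on $G/B$ that occur, and match the resulting $G$-module with the $\mu$-central-character block of $V_{\mu-\lambda}\otimes(-)$; alternatively, reduce the bimodule identity modulo $p$ from its known form over $\CC$, since all the objects in sight are defined over a localization of $\mathbb{Z}$.

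A small amount of derived bookkeeping then remains: $T_{\lambda,\mu}$ is exact on the abelian categories (tensoring with $V_{\mu-\lambda}$ is exact and $\mathrm{pr}_{\mu}$ is a direct-summand projection), whereas $F^{B}_{\lambda,\mu}$ is defined with derived functors. For $\lambda$ $p$-regular and $p\gg 0$ the higher cohomology $R^{>0}\Gamma(G/B,\mathcal{T}_{\lambda,\mu})$ vanishes (again part of the BMR package for the regular block), so $\mathcal{T}_{\lambda,\mu}$ is concentrated in cohomological degree $0$, $F^{B}_{\lambda,\mu}$ is the derived functor of $T_{\lambda,\mu}$, and both are equivalences. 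Finally, the whole identification is by construction compatible with the localization equivalences $\Gamma^{B,\lambda}$, $\Gamma^{B,\mu}$, which is precisely the commuting square asserted in the statement.

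\textbf{Main obstacle.} The only step that is not formal is the bimodule identification $R\Gamma(G/B,\mathcal{T}_{\lambda,\mu})\simeq\mathrm{pr}_{\mu}(u(\g)_{\lambda}\otimes_{k}V_{\mu-\lambda})$ together with the vanishing of its higher cohomology --- that is, importing the classical description of the translation bimodule as derived global sections of the geometric translation sheaf $\mathcal{D}_{\lambda}\otimes O(\mu-\lambda)$ to characteristic $p$. This is exactly where the hypotheses ``$p\gg 0$'' and ``$\lambda,\mu$ $p$-regular in adjacent alcoves'' (so that $\mu-\lambda$ stays controlled) are genuinely used, and it should be carried out either by citing the relevant cohomology statements of \cite{BMR},\cite{BMR2} or by spelling out the reduction-mod-$p$ argument from the complex case.
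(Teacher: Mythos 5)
The paper itself offers no proof of this lemma — it only says ``This follows from \cite{BM}'' — so there is no internal argument to compare against; what you supply is a plausible reconstruction of what that citation is actually invoking. Your outline is sound and follows the same strategy that \cite{BMR},\cite{BM} use: transport $F^B_{\lambda,\mu}$ across the derived localization equivalences so that it becomes tensoring by the bimodule $R\Gamma(G/B,\ O(\mu-\lambda)\otimes_{O}\mathcal{D}_{\lambda})$, write the algebraic translation functor $T_{\lambda,\mu}$ as tensoring by $\mathrm{pr}_{\mu}(V_{\mu-\lambda}\otimes_k u(\g)_{\lambda})$, and then identify the two $(u(\g)_{\mu},u(\g)_{\lambda})$-bimodules. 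That identification is the genuine content, and you flag it correctly. In \cite{BMR}/\cite{BM} it is obtained from the $G$-equivariant filtration of $V_{\mu-\lambda}\otimes_k \mathcal{D}_{\lambda}$ whose subquotients are (twists of) the sheaves $\mathcal{D}_{\lambda+\nu}$ over the weights $\nu$ of $V_{\mu-\lambda}$: since $\mu-\lambda$ is extremal, $O(\mu-\lambda)\otimes\mathcal{D}_{\lambda}$ is the top (or bottom) subquotient, and because $\lambda,\mu$ are $p$-regular in adjacent alcoves the Harish--Chandra projector $\mathrm{pr}_{\mu}$ on global sections picks out exactly that layer. Your alternative — reduction from $\CC$ via a $\mathbb{Z}_{(p)}$-form — is also acceptable and matches the reduction technique the paper uses elsewhere (in the proof of the key Lemma 1).

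Two small points worth tightening. First, a convention slip: to obtain a $(\mathcal{D}_{\mu},\mathcal{D}_{\lambda})$-bimodule you should twist on the left, $O(\mu-\lambda)\otimes_{O}\mathcal{D}_{\lambda}$, rather than $\mathcal{D}_{\lambda}\otimes_{O}O(\mu-\lambda)$ as written (the latter has the wrong right action unless one adopts the opposite-sided twisting convention). Second, the higher cohomology vanishing $R^{>0}\Gamma(G/B,\ O(\mu-\lambda)\otimes\mathcal{D}_{\lambda})=0$ (which is what makes $F^B_{\lambda,\mu}$ $t$-exact and lets you compare with the exact functor $T_{\lambda,\mu}$) requires $\mu-\lambda$ to lie in the dominant cone for $B$; the hypothesis $\A<_B\A'$ guarantees this only after a suitable choice of $\lambda,\mu$ in their alcoves, and the independence of that choice is exactly the commuting square the paper establishes under ``Independence of $\lambda$''. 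With these caveats your sketch is essentially complete and compatible with what the paper is citing.
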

	
	This follows from \cite{BM}
	\bs

Next there is a commutative diagram:

Let $\pi:G/B\rightarrow G/P$ be the projection.
		\begin{lemma}
		
		Let $\lambda\in \A,\mu\in \A'$ be two parabolic regular integral weights (that is in $\Lambda_L^{reg}$), Recall that $\A<_P\A'$. 
		
	 The ordinary pullback functor $\pi^*:D^b \mathcal{D}_{\lambda} ((G/P))-mod\rightarrow D^b\mathcal{D}_{\lambda} ((G/B))-mod$, fits into the following commutative diagram that involves $F_{\mathcal{A},\mathcal{A}'}^P$: 
	\bs

\[
\xymatrix
{D^b(\Gamma(\mathcal{D}_{\lambda}(G/B))-mod) \ar[r]^-{F_{\lambda,\mu}^B} & D^b(\Gamma(\mathcal{D}_{\mu}(G/B))-mod) \\
D^b (\Gamma(\mathcal{D}_{\lambda}(G/P)-mod))\ar[r]^-{F_{\lambda,\mu}^P} \ar[u]^-{\pi*}& D^b(\Gamma(\mathcal{D}_{\mu}(G/P))-mod)  \ar[u]_{\pi*}}
\]
	\end{lemma}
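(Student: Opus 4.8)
The plan is to lift the whole square to the level of twisted $\mathcal{D}$-modules, where the horizontal arrows become line-bundle twist functors and the vertical arrows become the ordinary $\mathcal{D}$-module pullback $\pi^\ast_{\mathcal{D}}$ along $\pi$, and then to reduce commutativity to the projection formula. The first point to record is that $\lambda$ and $\mu$, lying in $\Lambda_L^{reg}$, are $p$-regular not only for $G/P$ but also for $G/B$: the only extra coroots one must check are those $\alpha^\vee$ with $\mathfrak{g}_\alpha\subset\mathrm{Lie}(L)$, and for such $\alpha$ one has $\langle\lambda+\rho,\alpha^\vee\rangle=\langle\rho,\alpha^\vee\rangle$ because $\lambda\in\Lambda_L$, which is a positive integer less than $p$ for $p\gg0$, hence nonzero mod $p$. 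Consequently the derived localization equivalences $\Gamma^{B,\bullet},\Gamma^{P,\bullet}$ and their inverses $\mathrm{Loc}^{B,\bullet},\mathrm{Loc}^{P,\bullet}$ are all available in this range, and the vertical arrow of the diagram is by construction the composite $\Gamma^{B,\bullet}\circ\pi^\ast_{\mathcal{D}}\circ\mathrm{Loc}^{P,\bullet}$, where $\pi^\ast_{\mathcal{D}}$ is exact since $\pi\colon G/B\to G/P$ is smooth.

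Next I would unwind the definitions of $F^B_{\lambda,\mu}$ and $F^P_{\lambda,\mu}$ as composites of a localization functor, a twist by $\mathcal{O}(\mu-\lambda)$, and a global sections functor, and cancel each adjacent pair $\mathrm{Loc}\circ\Gamma\simeq\mathrm{Id}$. Going around the square one way then collapses to $\Gamma^{B,\mu}\circ\pi^\ast_{\mathcal{D}}\circ(-\otimes\mathcal{O}_{G/P}(\mu-\lambda))\circ\mathrm{Loc}^{P,\lambda}$, and the other way to $\Gamma^{B,\mu}\circ(-\otimes\mathcal{O}_{G/B}(\mu-\lambda))\circ\pi^\ast_{\mathcal{D}}\circ\mathrm{Loc}^{P,\lambda}$. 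Hence it suffices to produce a natural isomorphism of functors
\[
\pi^\ast_{\mathcal{D}}\circ\big(-\otimes\mathcal{O}_{G/P}(\nu)\big)\;\simeq\;\big(-\otimes\mathcal{O}_{G/B}(\nu)\big)\circ\pi^\ast_{\mathcal{D}},\qquad\nu:=\mu-\lambda\in\Lambda_L .
\]

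For this I would invoke the projection formula. As $\nu\in\Lambda_L=\mathrm{Pic}(G/P)$ we have $\pi^\ast\mathcal{O}_{G/P}(\nu)\cong\mathcal{O}_{G/B}(\nu)$, and since $\pi^\ast$ is symmetric monoidal on quasi-coherent sheaves there is a canonical isomorphism $\pi^\ast(\mathcal{M}\otimes_{\mathcal{O}_{G/P}}\mathcal{O}_{G/P}(\nu))\cong\pi^\ast\mathcal{M}\otimes_{\mathcal{O}_{G/B}}\mathcal{O}_{G/B}(\nu)$ of $\mathcal{O}_{G/B}$-modules, natural in $\mathcal{M}$. The remaining content is to upgrade this to an isomorphism of twisted $\mathcal{D}$-modules: both sides are $\mathcal{D}_{G/B,\lambda+\nu}$-modules (twisting a $\mathcal{D}_{\bullet,\lambda}$-module by the weight-$\nu$ line bundle shifts the TDO twist by $\nu$ uniformly on $G/P$ and on $G/B$, and $\pi^\ast_{\mathcal{D}}$ carries $\mathcal{D}_{G/P,\bullet}$-modules to $\mathcal{D}_{G/B,\bullet}$-modules because $\pi^\ast\mathcal{D}_{G/P,\nu}=\mathcal{D}_{G/B,\nu}$), and the two resulting $\mathcal{D}_{G/B,\lambda+\nu}$-structures on $\pi^\ast\mathcal{M}\otimes\mathcal{O}_{G/B}(\nu)$ coincide by a local computation with vector fields, using the relative tangent sequence $0\to T_{G/B/G/P}\to T_{G/B}\to\pi^\ast T_{G/P}\to 0$ and the Leibniz rule for the twist. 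Substituting this isomorphism into the two collapsed circuits yields the commutative square.

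I expect the main obstacle to be precisely this last compatibility check — promoting the projection-formula isomorphism from $\mathcal{O}$-modules to twisted $\mathcal{D}$-modules while bookkeeping correctly the shift of the TDO twist by $\nu$ and the conventional cohomological shift by $\dim(P/B)$ built into $\pi^\ast_{\mathcal{D}}$ (the latter, being applied on both vertical arrows, is harmless for commutativity but must be handled consistently). The remaining ingredients — the regularity estimate and the cancellation of the localization equivalences — are routine.
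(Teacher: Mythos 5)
The paper states this lemma without supplying a proof (after the diagram it moves directly to the corollary), so there is no argument of the author's to compare yours against, but your argument is correct and fills the gap. The two structural points are both sound: (i) the observation that a parabolic-regular integral $\lambda\in\Lambda_L$ is automatically $p$-regular for $\g$ itself (the extra coroots to check are those $\alpha^\vee$ with $\g_\alpha\subset\mathrm{Lie}(L)$, where $\langle\lambda+\rho,\alpha^\vee\rangle=\langle\rho,\alpha^\vee\rangle$ is a bounded positive integer, hence nonzero mod $p$), which is needed to make $\mathrm{Loc}^{B,\bullet}$ available and is easy to overlook; and (ii) the reduction, after cancelling $\mathrm{Loc}\circ\Gamma\simeq\mathrm{Id}$ on both circuits, to the single natural isomorphism $\pi^*_{\mathcal{D}}\circ(-\otimes\mathcal{O}_{G/P}(\nu))\simeq(-\otimes\mathcal{O}_{G/B}(\nu))\circ\pi^*_{\mathcal{D}}$. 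You are also right that this last commutation carries the real content; your sketch via the relative tangent sequence and the Leibniz rule is the standard verification, and it can be phrased more structurally by noting that the transfer bimodule $\mathcal{D}_{G/B\to G/P,\lambda}$ becomes $\mathcal{D}_{G/B\to G/P,\lambda+\nu}$ upon tensoring with $\mathcal{O}(\nu)$ on either side, so both composites are tensoring with the same bimodule. One small terminological slip: the identity $\pi^*(\mathcal{M}\otimes\mathcal{O}(\nu))\simeq\pi^*\mathcal{M}\otimes\pi^*\mathcal{O}(\nu)$ you invoke is the monoidality of pullback, not the projection formula (which concerns pushforward); this does not affect the argument.
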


\bs

	\begin{cor} 
	
	Using Lemma 3, lemma 4 becomes the following commutative diagram [[2]].

	\[
\xymatrix
{D^b(u(\g)_{\lambda}-mod) \ar[r]^-{T_{\lambda,\mu}} & D^b(u(\g)_{\mu}-mod) \\
D^b (A_{\lambda}-mod))\ar[r]^-{F_{\lambda,\mu}^P} \ar[u]^-{\pi*}& D^b(A_{\mu}-mod)  \ar[u]_{\pi*}}
\]
	
	where the vertical maps $\pi^*$ come from the surjection of algebras $u(\g)_{\lambda}\rightarrow A_{\lambda}$. It is derived tensoring with $u_{\lambda}\otimes_{A_{\lambda}}$ on the left verticle map, (similarly $u_{\lambda}\otimes_{A_{\lambda}}$ on the right verticle map)
	\end{cor}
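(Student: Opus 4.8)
The plan is to deduce Corollary~1 essentially formally from Lemma~3 and Lemma~4, the only genuine work being the identification of the vertical pullback functor with base change along the algebra surjection $u(\g)_\lambda\twoheadrightarrow A_\lambda$. First I would recall that, since $\lambda,\mu\in\Lambda_L$ are parabolic $p$-regular, the derived global sections functors $\Gamma^{B,\bullet}$ and $\Gamma^{P,\bullet}$ are equivalences, and that under the tautological identifications $\Gamma(\mathcal{D}_\lambda(G/B))=u(\g)_\lambda$ and $\Gamma(\mathcal{D}_\lambda(G/P))=A_\lambda$ the four corners of the square of Lemma~4 are literally the four corners of the square of the corollary. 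Lemma~3 then rewrites the top edge $F^B_{\lambda,\mu}$ as the translation functor $T_{\lambda,\mu}$, and the bottom edge is $F^P_{\lambda,\mu}$ by construction. So what remains is to describe the vertical arrow, which is the transport of the $\mathcal{D}$-module pullback, $\Gamma^{B,\lambda}\circ\pi^*\circ Loc^{P,\lambda}\colon D^b(A_\lambda-mod)\to D^b(u(\g)_\lambda-mod)$, together with its analogue for $\mu$.

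To carry out that identification I would proceed as follows. For an $A_\lambda$-module $M$, write $\mathcal{M}:=Loc^{P,\lambda}(M)$, so that $R\Gamma(G/P,\mathcal{M})\simeq M$. The underlying $\mathcal{O}_{G/B}$-module of $\pi^*\mathcal{M}$ is the flat pullback $\pi^*_{\mathcal{O}}\mathcal{M}$, so the projection formula together with $R\pi_*\mathcal{O}_{G/B}\simeq\mathcal{O}_{G/P}$ — which holds because the fibres of $\pi\colon G/B\to G/P$ are the partial flag varieties $P/B$, with $H^0(\mathcal{O})=k$ and vanishing higher cohomology — yields a natural quasi-isomorphism $R\Gamma(G/B,\pi^*\mathcal{M})\simeq R\Gamma(G/P,\mathcal{M})\simeq M$ of underlying complexes. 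Then I would promote this to a $u(\g)_\lambda$-linear isomorphism in which $u(\g)_\lambda$ acts on the target through its surjection onto $A_\lambda$: the left $\mathcal{D}_{G/B}$-action on $\pi^*\mathcal{M}$ is induced from the $\mathcal{D}_{G/P}$-action on $\mathcal{M}$ along the comparison map attached to $\pi$, and on global sections that comparison map is precisely the surjection $u(\g)_\lambda\to A_\lambda$ of the excerpt (the twists match because $\lambda\in\Lambda_L$ makes $\mathcal{L}_\lambda$ on $G/P$ pull back to $\mathcal{L}_\lambda$ on $G/B$, so the twisted pullback is defined and compatible). Equivalently, $\Gamma^{B,\lambda}\circ\pi^*\circ Loc^{P,\lambda}$ is the functor of tensoring over $A_\lambda$ with the $(u(\g)_\lambda,A_\lambda)$-bimodule $R\Gamma(G/B,\pi^*Loc^{P,\lambda}(A_\lambda))\simeq A_\lambda$, on which $u(\g)_\lambda$ acts through the surjection — that is, the derived base-change functor named in the statement. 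Running the same argument with $\mu$ in place of $\lambda$ produces the right vertical arrow, and assembling everything turns the square of Lemma~4 into the displayed square of Corollary~1.

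The main obstacle is precisely this last module-theoretic bookkeeping: upgrading the plain vector-space quasi-isomorphism $R\Gamma(G/B,\pi^*\mathcal{M})\simeq R\Gamma(G/P,\mathcal{M})$ to an equivalence of $u(\g)_\lambda$-modules that is visibly inflated from $A_\lambda$ via the surjection. One has to be careful that $\pi$ is smooth but not étale, so $\pi^*$ is the unshifted $\mathcal{O}$-flat pullback and the relative dimension enters cohomology only through $R\pi_*\mathcal{O}=\mathcal{O}$; and one has to reconcile the $\pi^{-1}\mathcal{D}_\lambda(G/P)$-action with the surjection $u(\g)_\lambda\to A_\lambda$ as it is actually defined in the text (via the common quotient picture $U(\g)\twoheadrightarrow u(\g)_\lambda$ and $U(\g)\twoheadrightarrow A_\lambda$, which is the shape used in the proof of Lemma~1). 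Everything else — invoking derived localization through $p$-regularity, substituting Lemma~3 into the top edge, and the commutativity of the resulting square — is formal.
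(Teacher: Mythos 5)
Your proof is correct and supplies content the paper leaves unexplained: the paper presents the Corollary as a formal substitution of Lemma~3 into Lemma~4, offering no argument for the algebraic description of the vertical arrows, whereas you actually derive it. Your computation — projection formula plus $R\pi_*\mathcal{O}_{G/B}\simeq\mathcal{O}_{G/P}$ (fibres $P/B$), the twist compatibility $\pi^*\mathcal{L}_\lambda\simeq\mathcal{L}_\lambda$ for $\lambda\in\Lambda_L$, and the identification of the $u(\g)_\lambda$-action on $R\Gamma(G/B,\pi^*\mathcal{M})$ with the one inflated along $u(\g)_\lambda\twoheadrightarrow A_\lambda$ — is the right route, and it shows that $\Gamma^{B,\lambda}\circ\pi^*\circ Loc^{P,\lambda}$ is tensoring over $A_\lambda$ with the $(u(\g)_\lambda,A_\lambda)$-bimodule $A_\lambda$, that is, restriction of scalars along the surjection. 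This is also what Claim~6 (full faithfulness and conservativity at the abelian level) requires.

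One caution, though: in your last step you call this ``the derived base-change functor named in the statement,'' but the paper's expression ``derived tensoring with $u_\lambda\otimes_{A_\lambda}$'' does not literally parse — $u_\lambda$ carries no right $A_\lambda$-module structure, since the two-sided ideal $\ker(u_\lambda\to A_\lambda)$ does not act by zero on $u_\lambda$ from the right, so $u_\lambda\otimes_{A_\lambda}(-)$ is undefined. What your argument actually produces, correctly, is restriction of scalars (equivalently $A_\lambda\otimes_{A_\lambda}(-)$ with the $u_\lambda$-structure inflated through the quotient). Rather than asserting agreement with the statement's formula, you should flag that the displayed expression must be read as the restriction-of-scalars functor you have constructed.
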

	
	Observe also that since $u_{\lambda}\rightarrow A_{\lambda}$ is surjective, it follows that, 
	\begin{claim}
	The functor $A_{\lambda}-mod\rightarrow u(\g)_{\lambda}-mod$ is fully faithful and conservative in the abelian level
	\end{claim}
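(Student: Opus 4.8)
The plan is to observe that the statement is a purely formal consequence of the surjectivity of the algebra map $\ul\to A_{\lambda}$ noted above. Write $\phi\colon \ul\twoheadrightarrow A_{\lambda}$ for this surjection; the functor in question is the restriction-of-scalars functor $\phi^{*}\colon A_{\lambda}-mod\to \ul-mod$, sending an $A_{\lambda}$-module $M$ to the same abelian group with $\ul$ acting through $\phi$. First I would check that $\phi^{*}$ indeed lands in finitely generated $\ul$-modules: a finite generating set of $M$ over $A_{\lambda}$ is also a generating set over $\ul$, since $\phi$ is onto, so $\phi^{*}$ really is a functor between the stated categories of finitely generated modules.

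For full faithfulness, fix $A_{\lambda}$-modules $M,N$. The comparison map $\mathrm{Hom}_{A_{\lambda}}(M,N)\to \mathrm{Hom}_{\ul}(\phi^{*}M,\phi^{*}N)$ is visibly the inclusion of two subsets of the set of abelian group homomorphisms $M\to N$, hence injective. For surjectivity, let $f\colon M\to N$ be $\ul$-linear; given $a\in A_{\lambda}$ choose $\tilde a\in \ul$ with $\phi(\tilde a)=a$, and compute $f(a\cdot m)=f(\tilde a\cdot m)=\tilde a\cdot f(m)=a\cdot f(m)$, where the outer equalities use precisely that the $\ul$-actions on $M$ and on $N$ factor through $\phi$. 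Thus $f$ is $A_{\lambda}$-linear, and the comparison map is a bijection.

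For conservativity, I would argue two ways. Abstractly, $\phi^{*}$ is exact (restriction of scalars along any ring homomorphism is exact) and, by the previous paragraph, faithful; a faithful exact functor between abelian categories reflects monomorphisms and epimorphisms, hence reflects isomorphisms. Concretely: if $\phi^{*}f$ is an isomorphism then $f$ is a bijection of abelian groups, and its set-theoretic inverse is automatically $A_{\lambda}$-linear by the same computation as above, so $f$ is already an isomorphism in $A_{\lambda}-mod$.

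There is no genuine obstacle here; the only points that merit a word of care are that $\phi^{*}$ preserves finite generation — which is exactly where surjectivity of $\phi$ is used — and that the assertion is made at the abelian level only. The analogous statement for the derived left adjoint $\ul\otimes^{\mathbf{L}}_{A_{\lambda}}-$ appearing in the preceding corollary would require flatness-type input and is not what is being claimed.
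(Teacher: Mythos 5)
Your proof is correct and takes the same route the paper implicitly intends: the claim is a formal consequence of the surjectivity of $u(\g)_{\lambda}\twoheadrightarrow A_{\lambda}$, since restriction of scalars along a surjective ring map is exact, fully faithful, and conservative, and preserves finite generation for exactly the reason you note. The paper gives no argument beyond the remark preceding the claim, so your write-up simply supplies the standard verification; your closing distinction between restriction of scalars and the tensor functor the paper attaches to the vertical arrows in the corollary is a helpful clarification, as the paper's description of that arrow is garbled (there is no evident right $A_{\lambda}$-module structure on $u(\g)_{\lambda}$), whereas the claim and its later use in Lemma 5 are unambiguously about restriction of scalars.
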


	\subsubsection*{Proof that the functor $F_{\lambda,\mu}^P:D^b(A_{\lambda}-mod)\rightarrow D^b(A_{\mu}-mod)$ is independent of the choice of P as long as $\lambda<_P\mu$}
	
		Even though by claim 6, the functor $\pi^*$ is fully faithful at the abelian level, this doesn't appriory imply that at the derived categories level - the upper horizontal functor $T_{\lambda,\mu}$ , determines the lower horizontal functor $F_{\lambda,\mu}^P$.  And yet 
		\begin{lemma}
		The functor $T_{\lambda,\mu}$ at the upper row of [[2]] does determine the functor at the lower row, $F_{\lambda,\mu}^P$.
		
		\end{lemma}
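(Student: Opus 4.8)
The plan is to exploit that, although $\pi^{*}$ is very far from fully faithful at the derived level, it is still \emph{split faithful}: the $\mathcal D$-module pushforward $\pi_{*}$ along the smooth proper flag bundle $\pi\colon G/B\to G/P$ retracts $\pi^{*}$ up to a canonical, explicit collection of shifts, and this lets one reconstruct $F^{P}_{\lambda,\mu}$ from $T_{\lambda,\mu}$ by an intrinsic formula. First I would pass from the square [[2]] to the corresponding square of $\mathcal D$-module categories via the localization equivalences $\Gamma/Loc$ (and the surjections $\ul\to\Al$): the horizontals become $F^{P}_{\lambda,\mu}$ between the categories of $\mathcal D_{\lambda}$- and $\mathcal D_{\mu}$-modules on $G/P$ and $T_{\lambda,\mu}$ between those on $G/B$, and the verticals become the $\mathcal D$-module pullback $\pi^{*}=L\pi^{*}$, which is well defined with its canonical $\mathcal D_{G/B}$-structure since $\pi$ is smooth. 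Here the hypothesis $\lambda,\mu\in\Lambda_{L}$ enters: the line bundles $\mathcal L_{\lambda},\mathcal L_{\mu}$ are trivial on the fibers $P/B\cong L/(L\cap B)$, so $\pi^{*}$ genuinely sends $\mathcal D_{\lambda}(G/P)$-modules to $\mathcal D_{\lambda}(G/B)$-modules and the proper pushforward $\pi_{*}$ sends $\mathcal D_{\mu}(G/B)$-modules to $\mathcal D_{\mu}(G/P)$-modules.

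Next I would pin down $\pi_{*}\pi^{*}$. By the projection formula for $\mathcal D$-module pushforward together with the relative cell decomposition of the flag bundle $\pi$ -- which makes $\pi_{*}\mathcal O_{G/B}$ formal, a direct sum of shifted copies of $\mathcal O_{G/P}$ with total fiber the de Rham cohomology $H^{*}_{dR}(P/B)$ (concentrated in even degrees, $H^{0}_{dR}(P/B)=k$; the picture is as in characteristic $0$ since $p\gg 0$) -- one obtains a natural isomorphism $\pi_{*}\pi^{*}N\simeq N\otimes_{k}H^{*}_{dR}(P/B)$, possibly up to an overall shift that the conventions fix once and for all. Crucially, $\pi_{*}(E)$ carries, functorially in the $\mathcal D_{\mu}(G/B)$-module $E$, a module structure over the graded ring $H^{*}_{dR}(P/B)$ (cup product with $\pi_{*}\mathcal O_{G/B}$), and with respect to it the isomorphism just displayed is free; hence $(\pi_{*}\pi^{*}N)\otimes^{L}_{H^{*}_{dR}(P/B)}k\simeq N$ naturally in $N$.

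Now I would apply $\pi_{*}$ to the commutativity isomorphism $\pi^{*}F^{P}_{\lambda,\mu}\simeq T_{\lambda,\mu}\,\pi^{*}$ of [[2]], getting a natural isomorphism $\pi_{*}\pi^{*}F^{P}_{\lambda,\mu}(M)\simeq\pi_{*}T_{\lambda,\mu}\pi^{*}(M)$. Being $\pi_{*}$ applied to an isomorphism of $\mathcal D_{\mu}(G/B)$-modules, and the $H^{*}_{dR}(P/B)$-action being functorial, this isomorphism is $H^{*}_{dR}(P/B)$-linear; applying $(-)\otimes^{L}_{H^{*}_{dR}(P/B)}k$ and using the previous step on the left gives $F^{P}_{\lambda,\mu}(M)\simeq\big(\pi_{*}T_{\lambda,\mu}\pi^{*}M\big)\otimes^{L}_{H^{*}_{dR}(P/B)}k$ naturally in $M$. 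The right side is built purely from $T_{\lambda,\mu}$ and the fixed geometry of $\pi$, so $T_{\lambda,\mu}$ determines $F^{P}_{\lambda,\mu}$; transporting back through $\Gamma/Loc$ restates this for [[2]] as written. (Note the overall shift in the second step passes harmlessly through $(-)\otimes^{L}_{H^{*}_{dR}(P/B)}k$, so the conclusion is insensitive to it.)

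The hard part is the content of the second paragraph: one must establish $\pi_{*}\pi^{*}\simeq(-)\otimes_{k}H^{*}_{dR}(P/B)$ not just as an abstract isomorphism of functors but compatibly with an \emph{intrinsic} $H^{*}_{dR}(P/B)$-module structure on the target -- it is exactly this compatibility that makes extracting the ``degree-zero part'' from $\pi_{*}T_{\lambda,\mu}\pi^{*}$ canonical and independent of $F^{P}_{\lambda,\mu}$ (a mere unnatural retraction $\mathrm{Id}\hookrightarrow\pi_{*}\pi^{*}\twoheadrightarrow\mathrm{Id}$ would not do, since one would not know the chosen retract is the canonical one). This requires the $\mathcal D$-module projection formula along $\pi$, the formality of $\pi_{*}\mathcal O_{G/B}$ for a flag bundle, the remark that the $\Lambda_{L}$-twist is immaterial on the fibers, and a careful bookkeeping of $\mathcal D$-module shift conventions. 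Everything else -- applying a functor to an isomorphism, and the identity $(N\otimes_{k}H^{*}_{dR}(P/B))\otimes^{L}_{H^{*}_{dR}(P/B)}k\simeq N$ -- is formal.
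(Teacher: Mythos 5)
Your route is genuinely different from the paper's. The paper never invokes a pushforward $\pi_*$ or any geometry of $\pi$ beyond the square [[2]] itself: it represents $T_{\lambda,\mu}$ and $F^P_{\lambda,\mu}$ by the bimodules $B_u=R\Gamma({}_\lambda\mathcal{D}_\mu)$ on $G/B$ and $B_A=R\Gamma({}_\lambda\mathcal{D}_\mu)$ on $G/P$, shows by a filtration argument (using $H^{>0}(O_{T^*G/P}(\mu-\lambda))=0$) that both live in the \emph{abelian} heart, and then observes that commutativity of [[2]] identifies the restriction $B_A'$ of $B_A$ along $u_\mu\to A_\mu$ with $B_u':=A_\lambda\otimes_{u_\lambda}B_u$; since restriction along the surjection $u_\mu\to A_\mu$ is fully faithful and conservative at the abelian level, $B_A$ --- and hence $F^P_{\lambda,\mu}$ --- is pinned down by $B_u$. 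The whole argument takes place in abelian categories of bimodules and costs only a cohomology vanishing. Your approach aims instead for an explicit inverse formula $F^P_{\lambda,\mu}\simeq\bigl(\pi_*\,T_{\lambda,\mu}\,\pi^*\bigr)\otimes^L_{H^*_{dR}(P/B)}k$, which would be conceptually pleasing but is much heavier: beyond the $\mathcal{D}$-module projection formula and the twist and shift bookkeeping along $\pi$ (the relative dualizing twist by $-2\rho_L$ must cancel, and Hodge--de Rham degeneration for $P/B$ must be in force --- plausible for $p\gg 0$ but not free), the load-bearing step is to exhibit $\pi_*$ as landing in genuine dg-modules over $H^*_{dR}(P/B)$, i.e.\ formality of $\pi_*O_{G/B}$ as a dg algebra together with compatibility of $\pi_*\pi^*N\simeq N\otimes_k H^*_{dR}(P/B)$ with that structure. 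Without the dg enhancement the expression $\otimes^L_{H^*_{dR}(P/B)}k$ is not even defined (an action up to homotopy in the triangulated category does not suffice to form a relative tensor product), and you correctly flag this as the crux but do not supply it. So the strategy is reasonable, but as written it has a real gap exactly where the paper's argument is weightless: by reducing to abelian-level bimodule statements, the paper never needs formality, a dg lift, or $\pi_*$ at all.
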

		
		\begin{proof}
			
		The two horizontal functors of the commutative diagram , $T_{\lambda,\mu}$ and $F_{\lambda,\mu}^P$, are given by derived tensoring with bimodules. The bimodules are the value of the functors on the algebras $F_{\lambda,\mu}^B(u_{\lambda})$,  $F_{\lambda,\mu}^P(A_{\lambda})$ respectively. By the definition of $F_{\lambda,\mu}^B, F_{\lambda,\mu}^P$, these are $R\Gamma(_{\lambda}\mathcal{D}_{\mu})$, where $\mathcal{D}_{\lambda}$ is the sheaf of differential operators either on G/B or G/P. and $_{\lambda}\mathcal{D}_{\mu}:=\mathcal{D}_{\lambda}\otimes O({\mu-\lambda})$.
		
		\begin{claim}
				These bimodules live in the heart of the modules categories $D^b(u_{\lambda}\otimes u_{\mu}^{op} -mod)$ $D^b(A_{\lambda}\otimes A_{\mu}^{op} -mod)$ respectively. 
		\end{claim}
		
		(It's enough to prove for P.)
		\begin{proof} 
				It's enough to prove that there is a filtration on $\mathcal{F}:=_{\lambda}\mathcal{D}_{\mu}$ , whose associated graded gr$\mathcal{F}$ has $H^i(gr\mathcal{F})=0$ for $i>0$. Since then it follows that $H^i(\mathcal{F})=0$ hence $R\Gamma(\mathcal{F})$ is in the heart. Indeed, $_{\lambda}\mathcal{D}_{\mu}$ has a filtration with associated graded $O_{T^*(G/P)}(\mu-\lambda)$, and for $\mu-\lambda>0$ $H^i(O_{T^*G/P}(\mu-\lambda))=0$. 
		
		\end{proof}
		
		Denote the bimodules, $B_{u}\in u_{\lambda}\otimes u_{\mu}^{op}-mod$, and $B_{A}\in A_{\lambda}\otimes A_{\mu}^{op}-mod$ respectively. 
		Let $B_A'\in A_{\lambda}\otimes u_{\mu}^{op}-mod$ be the module obtained from $B_A$ by making the $A_{\mu}$ right action to $u_{\mu}$ right action, through the map $u_{\mu}\rightarrow A_{\mu}$. Let $B_{u}':=A_{\lambda}\otimes_{u_{\lambda}}B_u$. $B_u'\in u_{\lambda}\otimes u_{\mu}^{op}-mod$. By the commutativity of diagram [[2]], it follows that $B_A'\simeq B_u'$. Hence $B_A'$ is determined from $B_u$. Since $A_{\mu}-mod\rightarrow u_{\mu}-mod$ is fully faithful and conservative, (similarly for $A_{\lambda}\otimes A_{\mu}^{op}-mod\rightarrow A_{\lambda}\otimes u_{\mu}^{op}-mod$) it follows that $B_A$ is determined from $B_A'$. Hence $B_A$ is determined from $B_u$.

			\end{proof}
	
		This finishes of the construction of the local system for $T^*G/P$

\end{document}